    \let\usingAmsArtXII\usepackage	
  \def \useHugeSize {}
  \def \numberingIsThrough {}
    \def\mathbb{\Bbb}
    \def\mathfrak{\frak}
    \def\mathbf{\bold}
      \def\boldsymbol#1{{\bold #1}}
    \def\mathbit{\boldsymbol}
    \newenvironment{proof}{%
         \@ifnextchar[{%
                       \expandafter\let\expandafter\end@proof
                         \csname endpf*\endcsname
                         \my@proof
                      }{\let\end@proof\endpf\pf}%
        }{\end@proof}
    \def\my@proof[#1]{\@nameuse{pf*}{#1}}
    \def\xrightarrow[#1]#2{@>{#2}>{#1}>}
    \def\xleftarrow[#1]#2{@<{#2}<{#1}<}
    \def\providecommand#1{\def#1}
    \def\emph#1{{\em #1}}
    \def\textbf#1{{\bf #1}}
    \def\mathring{\overset{\,\,{}_\circ}}
	\let\usingAmsArtXII\usepackage
        \DeclareMathAccent{\mathring}{\mathalpha}{operators}{"17}
      \long\def\FAKEendPROOF{\endtrivlist}
	  \def\endproof{\qed\endtrivlist}
        \DeclareMathAlphabet{\mathbit}{OML}{cmm}{b}{it}
      \def\Sb#1\endSb{_{\substack{#1}}}
      \def\Sp#1\endSp{^{\substack{#1}}}
                \def\mathcal{\cal}
                \def\pcyr{%
                        \def\default@family{UWCyr}%
                        \let\oldSl@\sl
                        \def\sl{\def\default@shape{it}\oldSl@}%
                        \cyracc
                        \language\Russian\family{UWCyr}\selectfont
                }
                \DeclareFontFamily{OT2}{cmr}{\hyphenchar\font45 }
                \DeclareFontShape{OT2}{cmr}{m}{n}{%
                     <5><6><7><8><9><10>gen*wncyr %
                     <10.95><12><14.4><17.28><20.74><24.88> wncyr10 %
                }{}
                \DeclareFontShape{OT2}{cmr}{m}{it}{%
                     <5><6><7><8><9><10> gen * wncyi%
                     <10.95><12><14.4><17.28><20.74><24.88> wncyi10%
                }{}
                \DeclareFontShape{OT2}{cmr}{bx}{n}{%
                     <5><6><7><8><9><10> gen * wncyb%
                     <10.95><12><14.4><17.28><20.74><24.88> wncyb10%
                }{}
                \DeclareFontShape{OT2}{cmr}{m}{sl}{%
                     <-> ssub * cmr/m/it%
                }{}
                \DeclareFontShape{OT2}{cmr}{m}{sc}{%
                     <5><6><7><8><9><10>%
                     <10.95><12><14.4><17.28><20.74><24.88> wncysc10%
                }{}
                \DeclareFontFamily{OT2}{cmss}{\hyphenchar\font45 }
                \DeclareFontShape{OT2}{cmss}{m}{n}{%
                     <8><9><10> gen * wncyss%
                     <10.95><12><14.4><17.28><20.74><24.88> wncyss10%
                }{}
                \def\cyrencodingdefault{OT2}
                \def\pcyr{%
                        \cyracc
                        \let\encodingdefault\cyrencodingdefault
                        \language\Russian\fontencoding{OT2}\selectfont
                }
        \def\theorembodyfont#1{\relax}
          \let\@@th@plain\th@plain
          \def\th@plain{ \@@th@plain \slshape }
        \let\normalshape\relax
     \def\cprime{$'$}
  \def\@sect@my#1#2#3#4#5#6[#7]#8{%
\ifnum #2>\c@secnumdepth
   \let\@svsec\@empty
 \else
   \refstepcounter{#1}%
\edef\@svsec{\ifnum#2<\@m
             \@ifundefined{#1name}{}{\csname #1name\endcsname\ }\fi
\noexpand\rom{\csname the#1\endcsname.}\enspace}\fi
 \@tempskipa #5\relax
 \ifdim \@tempskipa>\z@ 
   \begingroup #6\relax
   \@hangfrom{\hskip #3\relax\@svsec}{\interlinepenalty\@M #8\par}%
   \endgroup
   \if@article\else\csname #1mark\endcsname{%
        \ifnum \c@secnumdepth >#2\relax\csname the#1\endcsname. \fi#7}\fi
\ifnum#2>\@m \else
       \let\@tempf\\ \def\\{\protect\\}\addcontentsline{toc}{#1}%
{\ifnum #2>\c@secnumdepth \else
             \protect\numberline{%
               \ifnum#2<\@m
               \@ifundefined{#1name}{}{\csname #1name\endcsname\ }\fi
               \csname the#1\endcsname.}\fi
           #8}\let\\\@tempf
     \fi
 \else
  \def\@svsechd{#6\hskip #3\@svsec
    \@ifnotempty{#8}{\ignorespaces#8\unskip
       \ifnum\spacefactor<1001.\fi}%
        \ifnum#2>\@m \else
          \let\@tempf\\ \def\\{\protect\\}\addcontentsline{toc}{#1}%
            {\ifnum #2>\c@secnumdepth \else
              \protect\numberline{%
                \ifnum#2<\@m
                \@ifundefined{#1name}{}{\csname #1name\endcsname\ }\fi
                \csname the#1\endcsname.}\fi
             #8}\let\\\@tempf\fi}%
 \fi
\@xsect{#5}}
  \let\@sect\@sect@my             
  \def\th@remark@my{\theorempreskipamount6\p@\@plus6\p@
    \theorempostskipamount\theorempreskipamount
    \def\theorem@headerfont{\it}\normalshape}
    \let\th@remark\th@remark@my
    \let\o@@remark\th@remark
      \def\th@remark{\o@@remark
	\ifdim\theorempostskipamount < 2pt\relax
	  \theorempostskipamount\theorempreskipamount
	     \multiply\theorempostskipamount\tw@
	     \divide\theorempostskipamount\thr@@
	\fi
      }
\let\myLabel\@gobble
\def\labelsONmargin{\@mparswitchfalse\def\myLabel##1{\@bsphack\marginpar
                                  {\normalshape\tiny\rm Label ##1}\@esphack}}
  \def\url#1{{\tt #1}}%
\def\PREpmodSKIP{\allowbreak  \if@display\mkern18mu\else\mkern8mu\fi}
\def\cyracc{\def\u##1{
                \if \i##1\char"1A%
                \else \if I##1\char"12%
                \else \accent"24 ##1\fi\fi }%
\def\"##1{\if e##1{\char"1B}%
                \else \if E##1{\char"13}%
                \else \accent"7F ##1\fi\fi }%
\def\9##1{\if##1z\char"19 
\else\if##1Z\char"11 
\else\if##1E\char"03 
\else\if##1e\char"0B 
\else\if##1u\char"18 
\else\if##1U\char"10 
\else\if##1A\char"17 
\else\if##1a\char"1F 
\else\if##1p\char"7E 
\else\if##1P\char"5E 
\else\if##1Q\char"5F 
\else\if##1q\char"7F 
\else\if##1i\char"1A 
\else\if##1I\char"12 
\else\if##1N\char"7D 
\fi
\fi
\fi
\fi
\fi
\fi
\fi
\fi
\fi
\fi
\fi
\fi
\fi
\fi
\fi
}%
\def\cydot{{\kern0pt}}}%
\def\cydot{$\cdot$}
        \def\Russian{0\relax
    \message{Don't know the hyphenation rules for Russian^^J
                        Please do INITeX with `input  russhyph' in the 
                        command line}%
                \gdef\Russian{0\relax}%
        }
  \def\@putname#1#2#3#4{\def\@@ref{#3}\let\old@bf\bf
        \def\bf##1{\old@bf\if?\noexpand##1?{#4}\else##1\fi}%
	#1{#2}%
        \let\bf\old@bf}
  \def\@putname#1#2#3#4{\def\@@ref{#3}\let\old@bf\bf	
	\let\old@reset@font\reset@font			
        \def\bf##1{\old@bf\if?\noexpand##1?{#4}\else##1\fi}%
	\def\reset@font##1##2{\old@reset@font##1\if?\noexpand##2?{#4}\else##2\fi}#1{#2}%
        \let\bf\old@bf\let\reset@font\old@reset@font}
\let\my@ref=\ref
\def\ref#1{\@putname\my@ref{#1}{#1}{\tiny\rm\@@ref}}
\let\my@pageref=\pageref
\def\pageref#1{\@putname\my@pageref{#1}{#1}{\tiny\rm\@@ref}}
\let\my@cite=\cite
\def\cite#1{\@putname\my@cite{#1}{\@citeb}{\tiny\rm\@@ref}}
  \theoremstyle{plain} 
\address \undefined
\institute \undefined \else	
     \def\address{\institute}
\email \undefined
        \let\email\texttt
\let\emphOrig\emph
  \def\eatToBar#1|{}
  \def\emphToIndexSLASH#1\/{\index{#1}\eatToBar}
  \def\emphToIndexDOTSLASH#1.\/{\emphToIndexSLASH #1\/}
  \def\emphAndIndex#1{\emphOrig{#1}{\emphToIndexDOTSLASH #1.\/|}}
  \let\emph\emphAndIndex
\numberwithin{equation}{section}
\theoremstyle{definition}
\newtheorem{definition}{Definition}[section]
\newtheorem{definition}{Definition}
\newtheorem{example}[definition]{Example}
\theoremstyle{remark}
\newtheorem{remark}[definition]{Remark} 
\newtheorem{note}{Note}[section] 
\newtheorem{summary}{Summary}[section] 
\theoremstyle{plain} 
\newtheorem{theorem}[definition]{Theorem}
\newtheorem{lemma}[definition]{Lemma}
\newtheorem{corollary}[definition]{Corollary}
\newtheorem{proposition}[definition]{Proposition}
\newcommand{\triv}{{\mathbb C}}
\newcommand{\Ind}{\operatorname{Ind}}
\newcommand{\Res}{\operatorname{Res}}
\newcommand{\str}{\operatorname{str}}
\newcommand{\id}{\operatorname{Id}}
\newcommand{\ev}{\operatorname{ev}}
\newcommand{\coev}{\operatorname{coev}}
\newcommand{\Ext}{\operatorname{Ext}}
\newcommand{\rk}{\operatorname{rk}}
\newcommand{\End}{\operatorname{End}}
\def\supp{\operatorname{Supp}}
\def\Hom{\operatorname{Hom}}
\def\Ind{\operatorname{Ind}}
\def\Rep{\operatorname{Rep}}
\def\St{\operatorname{St}}
\def\sdim{\operatorname{sdim}}
\def\Res{\operatorname{Res}}
\def\Y{\mathbb Y}
\def\s{{\bf s}}
\def\cI{{\mathcal {I}}}
\def\C{{\mathbb C}}
\def\g{{\mathfrak {g}}}
\def\q{{\mathfrak {q}}}
\def\k{{\mathfrak {k}}}
\def\l{{\mathfrak {l}}}
\def\p{{\mathfrak {p}}}
\def\s{{\mathfrak {s}}}
\begin{document}
\title{Towards the Green correspondence for supergroups}

\author{ Inna Entova-Aizenbud, Vera Serganova, Alex Sherman}

\begin{abstract}
    We prove a version of Green's correspondence for complex algebraic supergroups, constructing a correspondence between certain indecomposable representations of $G$ and the normalizer of a Sylow subgroup of $G$.
\end{abstract}

\date{ \today }

\maketitle

\section{Introduction}

The subject of this paper is the category of representations of a quasireductive supergroup $G$ over the field of complex numbers. Recall that quasireductive supergroups are supergroups with linearly reductive underlying algebraic groups (up to connected component). They are characterized by the property that the tensor category $\Rep G$ of finite-dimensional $G$-modules is Frobenius.

Other well-studied examples of such categories come from modular representation theory of finite groups. One can try to apply methods of representation theory of finite groups to representations of quasireductive supergroups.

There are already several results in this direction. Boe, Kujawa and Nakano developed the theory of cohomological support for $\Rep G$ in \cite{BoKN1}, \cite{BoKN2} and \cite{BoKN3}.
In \cite{SSV} we develop the theory of Sylow subgroups in $G$, which are analogues of Sylow $p$-groups in modular representation theory. In particular, we prove that all Sylow subgroups are conjugate.

The goal of this paper is to study the relation between representations of $G$ and the normalizer $K$ of a Sylow subgroup of $G$.  This is directly inspired by local representation theory from the modular setting.  Our first result is Theorem \ref{thm:mainss}, in which we establish a correspondence between indecomposable representations of $G$ and $K$
with non-zero superdimension. Although this correspondence is not bijective, it is one-to-one under a mild technical condition. This condition holds for all simple quasireductive supergroups except of type $Q(n)$.  In the language of tensor categories, this correspondence induces a full functor from the semisimplification of $\Rep G$ to the semisimplification of $\Rep K$.  

Theorem \ref{thm:mainss} may be viewed as the first step towards a Green's correspondence for representations of supergroups.  Our assumptions on the superdimension mean we only consider modules whose defect subgroup is the whole Sylow subgroup.  For an analogous version of Green's correspondence for finite groups, see Sec.~4 of \cite{EO}.

In the modular representation theory for a finite group $G$, it is known that if a Sylow $p$-subgroup $P$ satisfies the trivial intersection property (meaning $P\cap gPg^{-1}=P$ or $\{e\})$, then restriction induces an equivalence of stable categories $\St G\to\St N_G(P)$ (see Chpt.~10 of \cite{A}).  In the super case, the analogous trivial intersection property is too strong as it immediately implies that $P$ is normal!  However a possible weakening of the condition is as follows: if $P$ is a Sylow subgroup, then $P_0$ satisfies the trivial intersection property, where $P_0$ is the maximal even subgroup of $P$.  This is natural in light of Corollary 9.10, of \cite{SSV} which states that two Sylow $p$-subgroups $P,P'$ are equal if and only $P_0=P'_0$.

The only (nontrivial) cases in which $P_0$ satisfies the trivial intersection property is when $G$ is a defect one basic classical supergroup.  In this case, we prove that the stable categories of $G$ and $K/\Gamma$ are equivalent for some finite subgroup $\Gamma\subset K$, see Theorem \ref{st-def1}. We use this to explicitly compute the semisimplification
of $\Rep G$ at the end of Section 3. 

In the last section, we discuss a conjectural candidate for the defect group of a block for classical supergroups $GL(m|n)$ and $OSp(m|2n)$ in terms of relative projectivity. 

\subsection{Acknowledgements} I.E. and V.S. were partially supported by the NSF-BSF grant 2019694. A.S. was partially supported by ARC grant DP210100251 and an AMS-Simons travel grant.

\section{Splitting subgroups and semisimplification}

\subsection{Splitting subgroups}  Let $G$ be a quasireductive supergroup; by this we mean that the connected component of the identity of $G_0$ is connected.  In particular, $G$ need not be connected.  We will write $\mathfrak{g}$ for the Lie superalgebra of $G$.

Recall that a quasireductive subgroup $K\subseteq G$ is called splitting if the natural $G$-map $\triv \to \Ind_K^G\triv $ coming from the adjunction has a splitting $\Ind_K^G\triv \to \triv$.  If $K\subseteq G$ is splitting and $H\subseteq K$ is splitting, then $H\subseteq G$ is also splitting.

\begin{example}
We now provide the main examples of splitting subgroups.
\begin{enumerate}
    \item If $G=GL(m|n)$, then $K=GL(1|1)\times GL(m-1|n-1)$ is splitting.  In fact, $GL(1|1)^{\min(m,n)}$ is splitting in $GL(m|n)$.
    \item If $G=OSp(m|2n)$ and $r=\min(\lfloor m/2\rfloor,n)$, then $K=OSp(2|2)^r\subseteq OSp(m|2n)$ is splitting.  
    \item Suppose more generally that $\g$ is a Kac-Moody Lie superalgebra.  Recall that the defect of $\g$ is the maximal size of a linearly independent set of isotropic roots of $\g$.  If $\g$ is of defect $d$, let $S=\{\alpha_1,\dots,\alpha_d\}$ be a linearly independet set of isotropic roots.  We may consider the Lie subalgebra $\mathfrak{k}$ of $\g$ generated by all root space $\g_{\pm\alpha_1},\dots,\g_{\pm\alpha_d}$.  Then $\k\cong \s\l(1|1)^d$.  If we let $K\subseteq G$ denote the subgroup of $\k$ which integrates $\k$, then $K$ is splitting inside of $G$.
    \item From example (3), we see that if $G$ is defect 1 (e.g.~$AG(1|2),AB(1|3)$, or $D(2,1;t)$) then $SL(1|1)\subseteq G$ is splitting, where $SL(1|1)$ is the root subgroup of any isotropic root.
    \item If $G=P(n)$, then $K=P(n-2)\times P(2)$ is splitting in $G$.  In particular $P(2)^{n}$ is spltting inside of $P(2n)$, and $P(2)^n\times P(1)$ is splitting inside of $P(2n+1)$.
    \item If $G=Q(n)$, then $K=Q(n-2)\times Q(2)$ is splitting in $G$.  In particular, $Q(2)^n$ is splitting inside of $Q(2n)$, and $Q(2)^n\times Q(1)$ is splitting inside of $Q(2n+1)$.  
\end{enumerate} 

\end{example}

\subsection{Semisimplification}  Let $\mathcal C$ be a symmetric tensor category. By this we mean that $\mathcal C$ is a $\Bbbk$-linear, symmetric monoidal category such that $\mathcal{C}$ is locally finite abelian, $\End(\mathbf{1})=\Bbbk$, and every object of $\mathcal{C}$ is rigid.  Given a morphism $f:X\to X$ in $\mathcal{C}$, we define $\operatorname{tr}(f)\in\Bbbk$ to be the following morphism:
\[
\mathbf{1}\xrightarrow{\text{coev}}X\otimes X^*\xrightarrow{f\otimes 1}X\otimes X^*\cong X^*\otimes X\xrightarrow{\text{ev}}\mathbf{1}.
\]
The tensor ideal $\mathcal{N}$ of negligible morphisms is given by all morphisms $f:X\to Y$ such that $\operatorname{tr}(fg)=0$ for all morphisms $g:Y\to X$.  

\begin{definition}
    For any tensor category $\mathcal C$ we define the semisimplification of $\mathcal C$ to be the quotient category  $\overline{\mathcal C}=\mathcal{C}/\mathcal{N}$.  We write $S:\mathcal C\to\overline{\mathcal C}$ for the quotient functor; we call $S$ the semisimplification functor.

\end{definition}

When $\mathcal{C}=\Rep G$, we will write $\str$ for the categorical trace.

\subsection{Construction of the functor $F$}
  
\begin{proposition}\label{prop:splitting}
Given a splitting subgroup $K \subset G$, the functor $\Res^G_K$ takes negligible morphisms to negligible morphisms.
\end{proposition}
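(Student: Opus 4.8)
The plan is to use the adjunction between restriction and induction together with the splitting assumption to show that the categorical trace is essentially preserved (up to a scalar factor coming from the splitting), so that negligibility is transported along $\Res^G_K$.

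First I would recall the setup: since $K\subseteq G$ is splitting, the unit map $u:\triv\to\Ind_K^G\triv$ has a retraction $r:\Ind_K^G\triv\to\triv$ with $r\circ u=\id_{\triv}$. By Frobenius reciprocity, for any $G$-module $X$ we have a natural isomorphism $\Hom_G(X,\Ind_K^G\triv)\cong\Hom_K(\Res^G_K X,\triv)$, and similarly using that $\Ind$ is also left adjoint (the category is Frobenius) one gets a projection formula $\Ind_K^G(\Res^G_K X)\cong X\otimes\Ind_K^G\triv$ as $G$-modules, natural in $X$. Composing with $r$ and $u$ then yields a $G$-module map $X\cong X\otimes\triv\to X\otimes\Ind_K^G\triv\cong\Ind_K^G\Res^G_K X\to X$ whose effect on traces I would track.

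The key computation is the following: for a morphism $f:X\to X$ in $\Rep G$, I claim $\str_G(f)$ can be recovered from $\str_K(\Res^G_K f)$. Concretely, $\str_K(\Res^G_K f)$ is the image of $\str_G$ of the composite $X\xrightarrow{f}X\to\Ind_K^G\Res^G_K X\to X$ built from $f$ and the (co)evaluations, and by naturality of the projection formula and the fact that $r\circ u=\id$, this composite equals (a scalar multiple of) $f$ itself, or more precisely its trace equals $\str_G$ of $f$ twisted by the splitting idempotent $e=u\circ r$. The upshot I would establish is: if $g:\Res^G_K Y\to\Res^G_K X$ is \emph{any} $K$-morphism, then $\str_K(\Res^G_K f\circ g)$ equals $\str_G$ of some $G$-morphism $Y\to X$ post-composed with $f$ — using the adjunction to turn $g$ into a $G$-morphism $Y\otimes\Ind_K^G\triv\to X$ and then the retraction $r$ to land back in $\Hom_G(Y,X)$. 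Hence if $f$ is negligible in $\Rep G$, every such trace vanishes, so $\Res^G_K f$ is negligible in $\Rep K$.

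The main obstacle is bookkeeping the naturality and coherence: one must check that the projection formula isomorphism $\Ind_K^G\Res^G_K X\cong X\otimes\Ind_K^G\triv$ is compatible with evaluation/coevaluation and with the adjunction units, so that the trace identities hold on the nose (including that the relevant scalar is $1$, coming from $r\circ u=\id$, rather than something that could vanish). Once that diagram chase is in place, transitivity of the splitting property (stated in the excerpt) is not needed here, but the rigidity of $\Rep G$ and $\Rep K$ is essential so that all the categorical traces make sense. I would organize the argument so that the only input from ``splitting'' is the single identity $r\circ u=\id_{\triv}$, applied at the very end to conclude that the transported trace is exactly $\str_G$ of a genuine $G$-endomorphism composed with $f$.
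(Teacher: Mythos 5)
Your proposal is correct and follows essentially the same route as the paper: you transfer an arbitrary $K$-morphism $g:\Res_K Y\to\Res_K X$ to a $G$-morphism $\tilde g=r\circ\Ind_K^G(g)\circ u$ via the projection formula $\Ind_K^G\Res_K X\cong X\otimes\Ind_K^G\triv$ and the splitting, and then use the trace identity $\str_K(g\circ\Res_K f)=\str_G(\tilde g\circ f)$ to conclude that negligibility of $f$ forces negligibility of $\Res_K f$. This is exactly the paper's construction of $\tilde g$ and its three claims (compatibility with $G$-morphisms, with composition, and with the categorical trace), so only the coherence diagram chase you flag remains to be written out, as the paper does with its commutative diagram.
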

\begin{proof} The splitting of $\triv \to \Ind_K^G\triv $ induces the splitting of $i_M:M\to \Ind_K^G \Res_K  M$ for any $G$-module $M$ via the canonical isomorphism
  $\Ind_K^G M\simeq (\Ind_K^G\triv)\otimes M$. Let $f\in \Hom_K(\Res_K M,\Res_K N)$, define  $\tilde f:\in \Hom_G(M,N)$ as a composition
  $$M\xrightarrow{i_M}\Ind^G_KM\xrightarrow{\Ind (f)}\Ind^G_KN\xrightarrow{p_N}N,$$
  where $p_N$ is the splitting morphism and $i_M$ the canonical embedding.

  First we claim that if $f\in \Hom_G(M,N)$ then $\tilde f=f$. Indeed,
  $$i_N\circ f= \Ind(f)\circ i_M$$
  and the statement follows since $p_N\circ i_N$ is the identity.

  Next we claim that if $f\in \Hom_G(M,N)$ and $g\in \Hom_K(\Res_K N,\Res_K L)$ then $\widetilde{g\circ f}=\tilde {g}\circ f$ since both are defined by
  the compositions
  $$M\xrightarrow{f}N\xrightarrow{i_N}\Ind^G_KN\xrightarrow{\Ind(g)}\Ind^G_KL\xrightarrow{p_L}L.$$

  Finally we claim that for $f\in \End_K(\Res_K M)$ we have $\str f=\str\tilde f$. It follows from the following commutative diagram
  $$\begin{CD}\triv@>\varphi>>M\otimes M^*@>\ev>>\triv\\
    @VViV @VViV@VViV\\
    \Ind^G_K\triv@>\Ind(\varphi)>>\Ind^G_K(M\otimes M^*)@>\Ind(\ev)>> \Ind^G_K\triv\\
    @VVpV @VVpV@VVpV\\
\triv@>\tilde \varphi>>M\otimes M^*@>\ev>>\triv
  \end{CD}$$
  where $\varphi=(f\otimes\operatorname{id})\circ \coev$.

  Assume now that $f\in\Hom_G(M,N)$ is a negligible morphism and $\Res_K f$ is not negligible. Then there exists  $g\in \Hom_K(\Res_K N,\Res_K M)$ such that
  $\str (g\circ f)\neq 0$. But then $\str(\tilde g\circ f)\neq 0$ and we obtain a contradiction.

\end{proof}

\begin{remark}
    In fact, Proposition \ref{prop:splitting} can be strengthened to an if and only if statement.  Suppose that $K$ is not splitting $G$, meaning that $\C\to\Ind_{K}^{G}\C$ does not split.  Let $V\subseteq\Ind_{K}^{G}\C$ be a nontrivial indecomposable submodule with $\C\subseteq V$.  Then the map $f:\C\to V$ is negligible since it is not split.  However upon restriction to $K$ there is a splitting coming from the evaluation morphism $V\subseteq\Ind_{K}^{G}\C\xrightarrow{\text{ev}_{eK}}\C$.  Thus $\operatorname{Res}_{K}(f)$ is not negligible.
\end{remark}

\begin{theorem}\label{thrm:split-ss}
 The functor $Res^G_K: \Rep(G) \to\ Rep(K)$ descends to a tensor functor $$F:\overline{\Rep(G)}\to\overline{\Rep(K)}$$ between the semisimplification categories, so that
the diagram
    $$\begin{CD}Rep (G)@>\Res>> Rep (K)\\
      @VVSV@VVSV\\
      \overline{Rep(G)}@>F>>\overline{Rep(K)}
      \end{CD}
    $$
   is commutative. 
\end{theorem}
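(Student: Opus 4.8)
The plan is to obtain $F$ as the factorization of $S\circ\Res^G_K$ through the quotient $S\colon\Rep(G)\to\overline{\Rep(G)}$, using Proposition~\ref{prop:splitting} to guarantee that this factorization exists. Write $\mathcal N_G$ and $\mathcal N_K$ for the tensor ideals of negligible morphisms in $\Rep(G)$ and $\Rep(K)$, so that $\overline{\Rep(G)}=\Rep(G)/\mathcal N_G$ has the same objects as $\Rep(G)$ and $\Hom$-spaces $\Hom_G(M,N)/\mathcal N_G(M,N)$, with $S$ the identity on objects and the canonical projection on morphisms; likewise for $K$. Since $\Res^G_K$ is a $\Bbbk$-linear symmetric monoidal functor, so is the composite $S\circ\Res^G_K\colon\Rep(G)\to\overline{\Rep(K)}$.

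First I would check that $S\circ\Res^G_K$ kills every negligible morphism: if $f\in\mathcal N_G$ then $\Res^G_K(f)\in\mathcal N_K$ by Proposition~\ref{prop:splitting}, hence $S(\Res^G_K(f))=0$. Consequently there is a unique functor $F\colon\overline{\Rep(G)}\to\overline{\Rep(K)}$ with $F\circ S=S\circ\Res^G_K$: on objects set $F(M)=\Res^G_K(M)$, and on the class $\bar f\colon M\to N$ of a morphism $f$ set $F(\bar f)=\overline{\Res^G_K(f)}$; well-definedness is exactly the previous sentence, and additivity and functoriality are inherited from $\Res^G_K$ and $S$. Uniqueness follows because $S$ is the identity on objects and surjective on morphisms. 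The commutativity of the square in the statement is precisely the identity $F\circ S=S\circ\Res^G_K$.

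Finally I would equip $F$ with the structure of a tensor functor. The monoidal constraints of $\Res^G_K$, namely the isomorphisms $\Res^G_K(M)\otimes\Res^G_K(N)\cong\Res^G_K(M\otimes N)$ and $\triv\cong\Res^G_K(\triv)$, are morphisms in $\Rep(K)$; applying $S$ and transporting along $F\circ S=S\circ\Res^G_K$ gives natural isomorphisms $F(M)\otimes F(N)\cong F(M\otimes N)$ and $\triv\cong F(\triv)$ in $\overline{\Rep(K)}$. Each coherence diagram (associativity, unit, symmetry) for these constraints is the image under $S$ of the corresponding commuting diagram for $\Res^G_K$, hence commutes; so $F$ is a symmetric tensor functor. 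I do not expect a real obstacle here --- once Proposition~\ref{prop:splitting} is available the theorem is a formal consequence of the universal property of the Gabriel quotient by a tensor ideal. The one place deserving care is verifying that the monoidal data and coherence conditions descend, and this is immediate because $S$ is bijective on objects and full on morphisms, so any diagram in $\overline{\Rep(K)}$ assembled from images of $\Res^G_K$-morphisms commutes as soon as its preimage does.
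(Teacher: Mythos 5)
Your proposal is correct and takes essentially the same route as the paper: the paper's proof consists of the single observation that existence of $F$ (and commutativity of the square) follows from Proposition \ref{prop:splitting}, which is exactly the factorization through the tensor ideal of negligible morphisms that you spell out in detail. The only point the paper adds is that $F$, being a $\C$-linear symmetric monoidal functor between semisimple categories, is automatically exact and faithful --- worth including if ``tensor functor'' is meant in the strong (exact, faithful) sense.
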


\begin{proof}
 The existence of a functor $F$ making the above diagram commutative follows from Proposition \ref{prop:splitting}. 
 Being a $\C$-linear functor between semisimple categories, $F$ is automatically exact; since $F$ is also symmetric monoidal, it must be faithful.
\end{proof}

\subsection{DS functor  and fullness of $F$ } Recall that $x\in\g_{\bar 1}$ is called {\it homological}
if $[x,x]$ is a semisimple element of $\g_{\bar 0}$. By $\g_{\bar 1}^{hom}$ we denote the set of all homological elements.

For any $x\in\g_{\bar 1}^{hom}$ there exists a symmetric monoidal functor $DS_x:\Rep G\to\operatorname{sVec}$ defined by 
\[
	DS_xM=\frac{\operatorname{Ker}(x:M\to M)}{\operatorname{Im}(x:M\to M)\cap\operatorname{Ker}(x:M\to M)}.
	\]
       Observe that $DS_xM$ admits a natural action of the Lie superalgebra $\g_x:=DS_x\g$.
        \begin{definition} Let $G$ be a quasireducitve supergroup and $K\subset G$ is a splitting subgroup.  A pair $(G,K)$ is called {\it proper} if for some $x\in\k_{\bar 1}^{hom}$ we have $DS_x\Ind^G_K\triv=\triv$.
          \end{definition}
Then the following Lemma is straightforward.
    
 \begin{lemma}\label{lem:ind} Let $(G,K)$ be a proper pair. Then for any $G$-module $M$ we have $DS_x\Ind^{G}_KM\simeq DS_x M$.
    \end{lemma}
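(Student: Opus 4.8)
The plan is to exploit the canonical isomorphism $\Ind^G_K M \simeq (\Ind^G_K \triv)\otimes M$ of $G$-modules, which was already used in the proof of Proposition \ref{prop:splitting}, together with the fact that $DS_x$ is a symmetric monoidal functor. First I would note that since $x\in\k_{\bar 1}^{hom}\subseteq\g_{\bar 1}^{hom}$, the functor $DS_x$ is defined on $\Rep G$ and on $\Rep K$, and it commutes with $\Res^G_K$ in the obvious way. Applying $DS_x$ to the isomorphism $\Ind^G_K M\simeq(\Ind^G_K\triv)\otimes M$ and using monoidality gives
\[
DS_x\Ind^G_K M\simeq DS_x\bigl((\Ind^G_K\triv)\otimes M\bigr)\simeq (DS_x\Ind^G_K\triv)\otimes DS_x M.
\]
By the definition of a proper pair, $DS_x\Ind^G_K\triv=\triv$, so the right-hand side is $\triv\otimes DS_x M\simeq DS_x M$, which is exactly the claim.

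The one point that requires a little care — and which I expect is the only real content beyond formal nonsense — is justifying that the canonical isomorphism $\Ind^G_K M\simeq(\Ind^G_K\triv)\otimes M$ is natural enough to be transported through $DS_x$; but this is immediate since $DS_x$, being a functor, takes isomorphisms to isomorphisms, and takes the specific tensor-product object to the tensor product of the images by monoidality. One should also check that the symmetric monoidal structure isomorphism $DS_x(A\otimes B)\simeq DS_xA\otimes DS_xB$ is the standard one for the de Sales–Serganova functor, so that $DS_x(\triv)=\triv$ and the unit constraint behaves correctly; this is part of the definition of $DS_x$ as a symmetric monoidal functor and needs no separate argument.

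Hence the proof is essentially a one-line computation, and the lemma is indeed "straightforward" as stated: the substance was already absorbed into the definition of a proper pair and into the identity $\Ind^G_K M\simeq(\Ind^G_K\triv)\otimes M$. There is no serious obstacle; the only thing to be vigilant about is not to conflate $DS_x$ applied over $G$ with $DS_x$ applied over $K$ — but since the underlying vector-space-level definition of $DS_x M$ depends only on the action of $x$ and $[x,x]$, and $x\in\k_{\bar1}$, the two agree on $\Res^G_K$ of any $G$-module, which is all we use.
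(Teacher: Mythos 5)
Your argument is correct and is exactly the ``straightforward'' proof the paper has in mind: apply the symmetric monoidal functor $DS_x$ to the projection-formula isomorphism $\Ind^G_K M\simeq(\Ind^G_K\triv)\otimes M$ already used in Proposition \ref{prop:splitting}, and invoke the defining property $DS_x\Ind^G_K\triv=\triv$ of a proper pair. The only blemish is a naming slip --- the functor is the Duflo--Serganova functor, not ``de Sales--Serganova'' --- which does not affect the mathematics.
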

    \begin{lemma}\label{lem:split} Suppose that $K$ is a splitting subgroup of $G$. Let $M$ be an indecomposable $G$-module and $M_0$ be a $K$-submodule of $\Res_K M$ which splits as a direct summand. If $\sdim M_0\neq 0$, then $M$ is a direct summand of $\Ind^G_K M_0$.
  \end{lemma}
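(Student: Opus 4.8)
The plan is to build, out of the $K$-module splitting $M_0\subseteq\Res_K M$, a $G$-endomorphism of $M$ that factors through $\Ind^G_K M_0$ and whose categorical trace equals $\sdim M_0$. Since $M$ is indecomposable, $\End_G(M)$ is local, so an endomorphism with nonzero trace cannot lie in the radical and is therefore invertible; this forces $M$ to split off $\Ind^G_K M_0$.

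In detail, I would write the splitting as $\iota\colon M_0\hookrightarrow\Res_K M$ and $\pi\colon\Res_K M\twoheadrightarrow M_0$ with $\pi\iota=\id_{M_0}$, and set $e_0:=\iota\pi\in\End_K(\Res_K M)$, an idempotent $K$-endomorphism with image $M_0$. Using the notation of the proof of Proposition \ref{prop:splitting} — the canonical $G$-embedding $i_M\colon M\to\Ind^G_K\Res_K M$, the splitting morphism $p_M$ with $p_M i_M=\id_M$, and the operation $f\mapsto\widetilde f=p_M\circ\Ind(f)\circ i_M$ — I would form $\widetilde{e_0}\in\End_G(M)$. That same proof shows $\str\widetilde{e_0}=\str e_0$, and cyclicity of the categorical trace gives $\str e_0=\str(\iota\pi)=\str(\pi\iota)=\str(\id_{M_0})=\sdim M_0\neq 0$. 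Because $\Ind^G_K$ is functorial, $\Ind(e_0)=\Ind(\iota)\circ\Ind(\pi)$, so $\widetilde{e_0}=\phi\circ\psi$ where $\psi:=\Ind(\pi)\circ i_M\colon M\to\Ind^G_K M_0$ and $\phi:=p_M\circ\Ind(\iota)\colon\Ind^G_K M_0\to M$ are $G$-module maps.

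To conclude, I would invoke that $\End_G(M)$ is a finite-dimensional local $\C$-algebra, so its Jacobson radical $J$ is nilpotent; every element of $J$ is a nilpotent, parity-preserving endomorphism of the underlying super vector space and hence has vanishing supertrace, which agrees with $\str$ via the forgetful symmetric tensor functor $\Rep G\to\SVect$. Since $\str\widetilde{e_0}=\sdim M_0\neq 0$, the endomorphism $\widetilde{e_0}=\phi\psi$ does not lie in $J$, hence is invertible in $\End_G(M)$. Then $(\phi\psi)^{-1}\circ\phi$ is a left inverse of $\psi$, so $\psi$ is a split monomorphism and $M$ is (isomorphic to) a direct summand of $\Ind^G_K M_0$.

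I expect the only non-formal point to be the step that converts the numerical hypothesis $\sdim M_0\neq 0$ into invertibility of $\widetilde{e_0}$ — namely, that the categorical trace vanishes on the radical of $\End_G(M)$ because morphisms in $\Rep G$ are even and radical elements are nilpotent. Everything else is the $(\Ind,\Res)$-bookkeeping already carried out in the proof of Proposition \ref{prop:splitting}, together with the locality of $\End_G(M)$ coming from Krull--Schmidt.
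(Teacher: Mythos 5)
Your proof is correct and follows essentially the same strategy as the paper: both produce a $G$-endomorphism of $M$ that factors through $\Ind^G_K M_0$ and has categorical trace $\sdim M_0\neq 0$, and both conclude from indecomposability of $M$ (nilpotent-or-invertible) that this endomorphism is an isomorphism, so $M$ splits off $\Ind^G_K M_0$. The only difference is packaging: you reuse the tilde construction and the identity $\str \tilde f=\str f$ from the proof of Proposition \ref{prop:splitting}, applied to the idempotent $\iota\pi$, whereas the paper first splits $\triv$ off $\Ind^G_K M_0\otimes M^*$ and then invokes its auxiliary Lemma \ref{general}.
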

  \begin{proof} 
    The composition of $K$-module maps $$ \triv\xrightarrow{\coev} M_0\otimes M^*_0 \to M_0\otimes M^*\xrightarrow{s} M^*\otimes M_0\to M^*_0\otimes M_0\xrightarrow{ev}\triv$$
  is $\sdim(M_0)\id_{\triv} \neq 0$ so $\triv$ is a direct summand of the $K$-module $ M_0\otimes M^*$. Hence $\Ind^G_K\triv$ is a direct summand of $\Ind ^G_KM_0\otimes M^*$. Since $K$ is a splitting subgroup, $\triv$ is a direct summand of the $G$-module $\Ind^G_K\triv$ and thus a direct summand of $\Ind ^G_KM_0 \otimes M^*$.

    This implies the splitting of $M$ in $\Ind ^G_KM_0$ by the lemma below for the case $U=\Ind ^G_KM_0$.
  \end{proof}
  \begin{lemma}\label{general} Let $U$ and $M$ be $G$-modules and $M$ be indecomposable and finite-dimensional. Assume that there are morphisms
    $$\triv\xrightarrow{\alpha}U\otimes M^*\xrightarrow{\gamma}\triv,$$
    such that $\gamma\alpha=1$. Define
    $$\bar\alpha: M\xrightarrow{\alpha\otimes 1_M}U\otimes M^*\otimes M\xrightarrow{1_U\otimes\ev} U $$
    and
    $$\bar\gamma: U\xrightarrow{1_U\otimes\coev} U\otimes M\otimes M^*\xrightarrow{1_U\otimes s_{23}}U\otimes M^*\otimes M\xrightarrow{\gamma\otimes 1_M} M$$
    Then $\bar\gamma\bar\alpha: M\to M$ is an isomorphism.
  \end{lemma}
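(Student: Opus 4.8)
The plan is to reduce the whole statement to a single trace computation. The point is that $\bar\gamma\bar\alpha$ is an endomorphism of $M$ in $\Rep G$, and since $M$ is finite-dimensional and indecomposable, $\End_G(M)$ is a finite-dimensional local $\C$-algebra; hence any element which is not invertible lies in the (nilpotent) Jacobson radical, and in particular is nilpotent. So it suffices to show that $\bar\gamma\bar\alpha$ is \emph{not} nilpotent. For this I will show $\str(\bar\gamma\bar\alpha)=1$, together with the elementary observation that any nilpotent $G$-endomorphism $f$ of $M$ has $\str f=0$: being a morphism of $G$-modules, $f$ is even, hence block-diagonal with respect to the $\mathbb Z/2$-grading of $M$, and its two blocks are ordinary nilpotent operators, of trace $0$ each.

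To compute $\str(\bar\gamma\bar\alpha)$ I would first apply cyclicity of the categorical trace to replace it by $\str(\bar\alpha\bar\gamma)$, where now $\bar\alpha\bar\gamma\in\End_G(U)$. Unwinding the definitions of $\bar\alpha$ and $\bar\gamma$, the composite $\bar\alpha\bar\gamma$ is exactly the partial trace over the $M^*$-strand of the idempotent $\alpha\gamma\in\End_G(U\otimes M^*)$: the $\coev$ and $\ev$ of $M$ appearing in $\bar\gamma$ and $\bar\alpha$ serve only to close that strand. Since a partial trace followed by the trace on the remaining tensor factor recovers the full trace, and using cyclicity once more together with the hypothesis $\gamma\alpha=\id_{\triv}$, this gives
\[
\str(\bar\alpha\bar\gamma)=\str_{U\otimes M^*}(\alpha\gamma)=\str_{\triv}(\gamma\alpha)=\gamma\alpha=1 .
\]
(If one prefers to avoid invoking partial traces, the identity $\str(\bar\gamma\bar\alpha)=1$ can instead be extracted from a direct graphical manipulation with the snake identities, substituting $\gamma\alpha=\id$ at the appropriate step; that bookkeeping is routine.) Combined with the previous paragraph, $\bar\gamma\bar\alpha$ is therefore invertible, i.e.\ an isomorphism.

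I expect the only genuine work to be the identification of $\bar\alpha\bar\gamma$ with the partial trace of $\alpha\gamma$ — that is, correctly tracking the duality morphisms and the symmetry $s$ through the composite; after that everything is formal. I would also emphasize that indecomposability of $M$ is genuinely needed, and is used precisely where one invokes that $\End_G(M)$ is local: for instance if $M=M_1\oplus M_2$ and $\triv$ already splits off from $U\otimes M_1^*$ alone, then $\bar\gamma\bar\alpha$ is a nonzero scalar multiple of the projection $M\to M_1\hookrightarrow M$, which is not invertible although still of supertrace $1$; so the local-ring input cannot be dropped.
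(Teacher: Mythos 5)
Your overall skeleton is the same as the paper's: show $\str(\bar\gamma\bar\alpha)=1$, observe that a nilpotent even endomorphism of a finite-dimensional module has supertrace $0$, and use that $\End_G(M)$ is local for $M$ finite-dimensional indecomposable, so a non-nilpotent endomorphism is invertible. The gap is in your main route to the trace identity. The lemma assumes only $M$ to be finite-dimensional; $U$ is an arbitrary $G$-module, and in the intended application (Lemma \ref{lem:split}, where $U=\Ind^G_K M_0$) it is typically infinite-dimensional, hence not rigid. Consequently $\str_U(\bar\alpha\bar\gamma)$ and $\str_{U\otimes M^*}(\alpha\gamma)$ are simply not defined, and the cyclicity step $\str(\bar\gamma\bar\alpha)=\str(\bar\alpha\bar\gamma)$, as well as the identification with a partial trace over the $M^*$-strand followed by a trace over $U$, is unavailable: cyclicity of the categorical trace needs both objects to be dualizable. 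So the computation cannot be transported to the $U$-side at all; it must be done using only the duality data of $M$.

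That is exactly what your parenthetical fallback gestures at, but in this lemma it is the entire content of the proof rather than routine bookkeeping, and it is where the paper works: choose a basis $\{m_i\}$ of $M$ with dual basis $\{\varphi_i\}$, write $\alpha(1)=\sum_i u_i\otimes\varphi_i$ so that $\sum_i\gamma(u_i,\varphi_i)=\gamma\alpha(1)=1$, and compute directly that $\bar\alpha(m_i)=u_i$ and $\bar\gamma(u)=\sum_j(-1)^{|m_j|}\gamma(u,\varphi_j)m_j$, whence $\str(\bar\gamma\bar\alpha)=\sum_i\gamma(u_i,\varphi_i)=1$; only finitely many vectors of $U$ appear and no trace over $U$ is ever taken. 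Equivalently, one can run your string-diagram argument closing only the $M$-loop, where the snake identities and $\gamma\alpha=\operatorname{id}_{\triv}$ give $\str(\bar\gamma\bar\alpha)=\gamma\alpha=1$. A minor further point: in your closing example with $M=M_1\oplus M_2$, the composite $\bar\gamma\bar\alpha$ is an endomorphism supported on $M_1$ but need not be a scalar multiple of the projection; the relevant moral is just that without indecomposability ``supertrace $1$'' only rules out nilpotence, not non-invertibility, which is indeed why locality of $\End_G(M)$ is needed.
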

  \begin{proof} Choose a basis $\{m_i\}$ in $M$ and the dual basis $\{\varphi_i\}$ in $M^*$. Let $\alpha(1)=\sum_i u_i\otimes \varphi_i$. Then we have $\sum_i\gamma(u_i,\varphi_i)=1$.
    We have that $\bar\alpha(m_i)=u_i$, and $\bar\gamma(u_i)=\sum_j(-1)^{m_j}\gamma(u_i,\varphi_j)m_j$. Then
    $$\operatorname{str}\bar\gamma\bar\alpha=\sum_i\gamma(u_i,\varphi_i)=1.$$
    Note that $M$ is indecomposable and hence every endomorphism of $M$ is either nilpotent or an isomorphism. Hence $\bar\gamma\bar\alpha$ is an isomorphism.
    \end{proof}
    \begin{proposition}\label{prop:full} Let $(G,K)$ be some proper pair and
     $M$ be an indecomposable $G$-module with $\sdim M\neq 0$. Then $\Res_K M$ has exactly one indecomposable component of non-zero superdimension.
    \end{proposition}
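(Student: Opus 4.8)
The plan is to combine the preceding lemmas in the obvious way and then rule out the possibility of two good summands by a superdimension count performed after applying $DS_x$. Write $\Res_K M\simeq \bigoplus_i M_i$ as a direct sum of indecomposable $K$-modules. Since $M$ is indecomposable with $\sdim M\neq 0$, and $\sdim \Res_K M=\sdim M=\sum_i\sdim M_i$, at least one summand $M_i$ has $\sdim M_i\neq 0$; fix such an $M_0$. By Lemma \ref{lem:split}, $M$ is a direct summand of $\Ind^G_K M_0$.

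Now suppose, for contradiction, that there is a second indecomposable summand $M_0'$ of $\Res_K M$ with $\sdim M_0'\neq 0$ and $M_0'\not\simeq M_0$ as summands (i.e.\ they sit in different slots of the decomposition). Applying Lemma \ref{lem:split} again, $M$ is also a direct summand of $\Ind^G_K M_0'$. The key step is to apply the functor $DS_x$ for the homological element $x\in\k_{\bar 1}^{hom}$ witnessing properness. Since $(G,K)$ is a proper pair, Lemma \ref{lem:ind} gives $DS_x\Ind^G_K M_0\simeq DS_x M_0$ and likewise $DS_x\Ind^G_K M_0'\simeq DS_x M_0'$; note $DS_x$ here is the $K$-version (the element $x$ lies in $\k_{\bar 1}$, and $\Res_K M_0$ makes sense). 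Because $DS_x$ is a symmetric monoidal (hence additive) functor and $M$ is a summand of each induced module, $DS_x M$ is a summand of both $DS_x M_0$ and $DS_x M_0'$. On the other hand, $DS_x\Res_K M\simeq \bigoplus_i DS_x M_i$ contains $DS_x M_0\oplus DS_x M_0'$ as a summand, and this must agree with $DS_x M$ computed directly (the functor $DS_x$ on $\Rep G$ restricts compatibly to $\Rep K$). The point is that $DS_x M$ is then forced to be a common summand of $DS_x M_0$ and $DS_x M_0'$ while simultaneously $DS_x M_0\oplus DS_x M_0'$ embeds as a summand of $DS_x M$; comparing superdimensions (or lengths) of the objects of $\operatorname{sVec}$ involved yields $DS_x M_0=0$ or $DS_x M_0'=0$, contradicting $\sdim M_i = \sdim DS_x M_i$ for the relevant index and the assumption $\sdim M_0,\sdim M_0'\neq 0$. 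Here I am using the standard fact that $\sdim N=\sdim DS_x N$ for any module $N$, so $\sdim M_0\neq 0$ forces $DS_x M_0\neq 0$.

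Finally, the existence part shows there is at least one such component, and the argument above shows there cannot be two, so $\Res_K M$ has exactly one indecomposable component of non-zero superdimension.

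The main obstacle I anticipate is bookkeeping around which copy of $DS_x$ is being used and making the multiplicity comparison airtight: one must check that $DS_x$ commutes with $\Res_K$ in the appropriate sense (so that "$DS_x M$" is unambiguous whether we view $M$ over $G$ or restrict first), and that the splitting of $M$ off $\Ind^G_K M_0$ survives application of $DS_x$ as a genuine direct-summand statement in $\operatorname{sVec}$. Once that is set up, the numerical contradiction is immediate from $\sdim\circ\, DS_x=\sdim$ and the additivity of superdimension over the decomposition $\Res_K M\simeq\bigoplus_i M_i$.
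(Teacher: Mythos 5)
There is a genuine gap at the key step. You invoke Lemma \ref{lem:ind} to assert $DS_x\Ind^G_K M_0\simeq DS_x M_0$ (and similarly for $M_0'$), but that lemma is stated for a \emph{$G$-module} $M$, where $\Ind^G_K M$ means $\Ind^G_K\Res_K M$; its ``straightforward'' proof is the tensor identity $\Ind^G_K\Res_K M\simeq(\Ind^G_K\triv)\otimes M$ together with monoidality of $DS_x$ and the properness condition $DS_x\Ind^G_K\triv=\triv$. An indecomposable summand $M_0$ of $\Res_K M$ is in general not itself the restriction of a $G$-module, so no projection formula applies to $\Ind^G_K M_0$ and the isomorphism $DS_x\Ind^G_K M_0\simeq DS_x M_0$ is not available from the cited lemma (justifying it would require something much stronger, e.g.\ localization-type results, and it is not needed). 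This identification is essential to your argument: without it, your remaining input is only $DS_x M\simeq\bigoplus_i DS_x M_i$, which contains $DS_x M_0\oplus DS_x M_0'$ tautologically and produces no contradiction, since the upper bound $\dim DS_x M\le \dim DS_x M_0$ has no justification.

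The repair is to apply Lemma \ref{lem:ind} to $M$ itself, as the paper does: writing $\Res_K M=\bigoplus_i M_i$, one gets $DS_x M\simeq DS_x\Ind^G_K\Res_K M\simeq\bigoplus_i DS_x\Ind^G_K M_i$, while Lemma \ref{lem:split} (plus additivity of $DS_x$) makes $DS_x M$ a direct summand of each of $DS_x\Ind^G_K M_0$ and $DS_x\Ind^G_K M_0'$; comparing dimensions forces $DS_x M=0$, contradicting $\sdim DS_x M=\sdim M\neq 0$. Note that the crucial quantities are $\dim DS_x\Ind^G_K M_i$, not $\dim DS_x M_i$, which is exactly the distinction your proposal elides. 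Your other steps (using Lemma \ref{lem:split} for each summand of nonzero superdimension, compatibility of $DS_x$ with restriction since $x\in\k_{\bar 1}$, and $\sdim DS_x N=\sdim N$) are fine.
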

    \begin{proof} Assume the opposite. Then we can write $\Res_K M=M_1\oplus M_2$ where both $M_1$ and $M_2$ have non-zero superdimension. Then by Lemma \ref{lem:ind},
      $$DS_x M=DS_x\Ind^G_KM=DS_x\Ind^G_KM_1\oplus DS_x\Ind^G_KM_2.$$
      On the other hand, by Lemma \ref{lem:split} we have $DS_xM\subset DS_x\Ind^G_KM_i$ for $i=1,2$. We obtain a contradiction.
      \end{proof}

      \begin{proposition}\label{prop:fullness} Assume that $(G,K)$ is a proper pair.
       Let $M_1, M_2$ be indecomposable $G$-modules of non-zero superdimension. Assume there exists an indecomposable $K$-module $V$, $\sdim(V)\neq 0$, such that $\Res_K M_i$ has $V$ as a direct summand for $i=1,2$. Then $M_1 \cong M_2$.
      \end{proposition}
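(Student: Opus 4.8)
The plan is to reduce the whole question to an analysis of the single module $\Ind_K^G V$, of which both $M_1$ and $M_2$ are forced to be direct summands, and then to use the functor $DS_x$ attached to properness to show that $\Ind_K^G V$ has only one indecomposable summand of nonzero superdimension. First I would apply Lemma~\ref{lem:split}, taking its module $M$ to be $M_i$ and its submodule $M_0$ to be $V$: since $V$ is an indecomposable $K$-submodule of $\Res_K M_i$ splitting as a direct summand, with $\sdim V\neq 0$, and since $M_i$ is indecomposable and $K$ is splitting, the lemma gives that $M_i$ is a direct summand of $\Ind_K^G V$ for $i=1,2$. Choosing $a_1\colon M_1\to\Ind_K^G V$ and $b_1\colon\Ind_K^G V\to M_1$ with $b_1a_1=\id$, the idempotent $a_1b_1$ splits off a copy of $M_1$, so we may write $\Ind_K^G V\cong M_1\oplus R$ for some complementary $G$-module $R$ (which will typically be infinite-dimensional, as $\Ind_K^G V$ is).

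Next I would bring in the $DS$-functor. Fix $x\in\k_{\bar 1}^{hom}$ with $DS_x\Ind_K^G\triv\cong\triv$, as in the definition of a proper pair. Since $V$ is a summand of $\Res_K M_1$, applying $\Ind_K^G$ shows that $\Ind_K^G V$ is a direct summand of $\Ind_K^G\Res_K M_1\cong M_1\otimes\Ind_K^G\triv$ (the projection formula, already used in the proof of Proposition~\ref{prop:splitting}). Applying the symmetric monoidal functor $DS_x$ and using properness, $DS_x\Ind_K^G V$ is a direct summand of $DS_xM_1\otimes DS_x\Ind_K^G\triv\cong DS_xM_1$; in particular $DS_x\Ind_K^G V$ is finite-dimensional, of total dimension at most $\dim DS_xM_1$. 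On the other hand, applying $DS_x$ to $\Ind_K^G V\cong M_1\oplus R$ gives $DS_x\Ind_K^G V\cong DS_xM_1\oplus DS_xR$, and comparing dimensions forces $DS_xR=0$.

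Finally I would conclude by a Fitting-lemma argument. The module $M_2$ is a direct summand of $\Ind_K^G V\cong M_1\oplus R$; writing $\id_{M_2}$ as the sum of its two components through this decomposition and using that $\End_G(M_2)$ is local (as $M_2$ is finite-dimensional and indecomposable), one of these components must be invertible. If it is the one factoring through $M_1$, then $M_2\to M_1$ is a split monomorphism, hence an isomorphism since $M_1$ is indecomposable and $M_2\neq 0$, and we are done. If instead it is the one factoring through $R$, then $M_2$ is a direct summand of $R$, so $DS_xM_2$ is a direct summand of $DS_xR=0$ and $\sdim M_2=\sdim DS_xM_2=0$, contradicting the hypothesis. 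Hence $M_1\cong M_2$.

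I expect the heart of the matter to be the dimension count yielding $DS_xR=0$: this is exactly where properness of $(G,K)$ is needed (to collapse the infinite-dimensional bulk of $\Ind_K^G\triv$ under $DS_x$), and where $\sdim V\neq 0$ enters (to place $M_1$, and $M_2$, inside $\Ind_K^G V$ in the first place via Lemma~\ref{lem:split}). The main technical nuisance is that $\Ind_K^G V$, and hence $R$, is generally infinite-dimensional, so one cannot directly quote Krull--Schmidt; the Fitting argument sidesteps this because it uses only localness of the endomorphism ring of the finite-dimensional module $M_2$.
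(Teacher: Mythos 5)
Your proposal is correct and takes essentially the same route as the paper's proof: both use Lemma \ref{lem:split} to realize $M_1,M_2$ as direct summands of $\Ind^G_KV$, the properness computation of Lemma \ref{lem:ind} (your projection-formula argument is exactly its proof) to bound $DS_x$ of anything split off $\Ind^G_K\Res_K M_1$ by $DS_xM_1$, and a Fitting-type argument together with preservation of superdimension under $DS_x$ to reach the contradiction $DS_xM_2=0$. The only difference is bookkeeping: the paper splits off $M_1\oplus M_2$ jointly (showing the cross-maps are nilpotent when $M_1\not\cong M_2$), whereas you split off $M_1$, show the complement $R$ satisfies $DS_xR=0$, and then use locality of $\End_G(M_2)$ to place $M_2$ inside $M_1$ or inside $R$.
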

      \begin{proof}
      Assume that $M_1 \not\cong M_2$.
       By Lemma \ref{lem:split}, we have both $M_1$ and $M_2$ as direct summands of $\Ind_K^G V$, with embeddings $\iota_i: M_i\to \Ind_K^G V$, and projections $\pi_i: \Ind_K^G V \to M_i$ for $i=1,2$. Let us show that $M_1 \oplus M_2$ is a direct summand of $\Ind_K^G V$ as well.
       
       Indeed, let $\iota = \iota_1 +\iota_2$ and $\pi=\pi_1+\pi_2$. Then $\pi\circ \iota \in \End_G(M_1 \oplus M_2)$ is $\id + \pi_2\iota_1 + \pi_1\iota_2$. Let us show that this is an isomorphism; in other words, that $\pi_2\iota_1 + \pi_1\iota_2$ is nilpotent. Here $\pi_2\iota_1: M_1 \to M_2$, $\pi_1\iota_2:M_2 \to M_1$. Indeed, $$ (\pi_2\iota_1 + \pi_1\iota_2)^2 = \pi_1\iota_2\pi_2\iota_1 \oplus \pi_2\iota_1\pi_1\iota_2$$ where the first summand is a map $M_1 \to M_1$ (factoring through $M_2$) and the second is a map $M_2 \to M_2$ (factoring through $M_1$). These maps clearly commute, and neither is an isomorphism, since $M_1 \not\cong M_2$ and they are indecomposable. The fact that $M_1, M_2$ are indecomposable implies that the commuting maps $\pi_1\iota_2\pi_2\iota_1$, $\pi_2\iota_1\pi_1\iota_2$ are nilpotent, and hence $\pi_2\iota_1 + \pi_1\iota_2$ is nilpotent.
       
       Thus $M_1 \oplus M_2$ is a direct summand of $\Ind_K^G V$.

       Now, since $V$ is a direct summand of $\Res_K M_i$, we have: $\Ind_K^G V$ is a direct summand of $\Ind_K^G \Res_K M_1$. So $M_1 \oplus M_2$ is a direct summands of $\Ind_K^G \Res_K M_1$. Hence the module
       $$DS_x(M_1)\cong DS_x \Ind_K^G \Res_K M_1$$
       (the isomorphism given by Lemma \ref{lem:ind}) has $DS_x(M_1) \oplus DS_x(M_2)$ as a direct summand, which is only possible if $DS_x(M_2)=0$. Yet the latter contradicts the assumption that $\sdim(M_2) \neq 0$.

      \end{proof}

      \begin{theorem}\label{thm:mainss} If $(G,K)$ is a proper pair then the functor $F:\overline{\Rep G}\to \overline{\Rep K}$ is full.
        \end{theorem}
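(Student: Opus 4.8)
The plan is to use that $\overline{\Rep G}$ and $\overline{\Rep K}$ are semisimple, so that fullness of $F$ can be checked on simple objects, where it reduces directly to Propositions \ref{prop:full} and \ref{prop:fullness}.

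First I would recall the general structure of the semisimplification of a Krull--Schmidt Frobenius category: $\overline{\Rep G}$ is semisimple; the functor $S$ annihilates every indecomposable of zero superdimension and sends an indecomposable $M$ with $\sdim M\neq 0$ to a simple object, with $S(M)\cong S(M')$ if and only if $M\cong M'$. Consequently the simple objects of $\overline{\Rep G}$ are precisely the classes $S(M)$ with $M$ indecomposable and $\sdim M\neq 0$, and $\Hom_{\overline{\Rep G}}(S(M),S(M'))$ equals $\C$ if $M\cong M'$ and $0$ otherwise, and the same holds for $K$. Since every object of either semisimplification is a finite direct sum of simples and both $F$ and $\Hom$ are additive, it suffices to prove that for indecomposable $G$-modules $M_1,M_2$ of non-zero superdimension the map
\[
\Hom_{\overline{\Rep G}}(S(M_1),S(M_2))\longrightarrow \Hom_{\overline{\Rep K}}(S\Res_K M_1,\,S\Res_K M_2)
\]
induced by $F$ is surjective.

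The key step is to identify the target. By Proposition \ref{prop:full}, $\Res_K M_i$ has exactly one indecomposable summand $V_i$ of non-zero superdimension, all remaining summands being negligible; hence $F S(M_i)=S\Res_K M_i\cong S(V_i)$ is simple, and so $\Hom_{\overline{\Rep K}}(S\Res_K M_1,S\Res_K M_2)=\Hom_{\overline{\Rep K}}(S(V_1),S(V_2))$. Now there are two cases. If $M_1\cong M_2$, the source is one-dimensional, spanned by $\id_{S(M_1)}$, whose image $\id_{F S(M_1)}$ spans the one-dimensional target, so the map is onto. If $M_1\not\cong M_2$, the source is $0$, and we must check that the target vanishes, i.e.\ that $V_1\not\cong V_2$; but if $V_1\cong V_2=:V$, then $V$ is an indecomposable $K$-module of non-zero superdimension which is a direct summand of both $\Res_K M_1$ and $\Res_K M_2$, and Proposition \ref{prop:fullness} then forces $M_1\cong M_2$, a contradiction. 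This establishes surjectivity in all cases, so $F$ is full.

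All the genuine work here is already packaged into Propositions \ref{prop:full} and \ref{prop:fullness} — which is precisely where properness of $(G,K)$ and the functor $DS_x$ enter — so the fullness argument itself is purely formal. The only point requiring (minor) care is the standard bookkeeping of semisimplification recalled above: that the simples of $\overline{\Rep G}$ are exactly the $S(M)$ with $\sdim M\neq 0$, that these are pairwise non-isomorphic, and that Hom spaces between them are at most one-dimensional; once these are in hand I do not anticipate a serious obstacle beyond correctly matching the indecomposable summands of $\Res_K M_i$ through $S$.
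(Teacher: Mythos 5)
Your proposal is correct and follows essentially the same route as the paper: the paper also reduces fullness, via semisimplicity, to the two facts that $F$ sends simples to simples (Proposition \ref{prop:full}) and is injective on isomorphism classes of simples (Proposition \ref{prop:fullness}), using the bijection between simples of the semisimplification and indecomposables of non-zero superdimension. The only difference is that you spell out the one-dimensional Hom-space bookkeeping that the paper leaves implicit.
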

        \begin{proof} Since $\overline{\Rep G}$ and  $\overline{\Rep K}$ are semisimple categories, fullness is equivalent to the following two properties:
          \begin{enumerate}
          \item If $X$ is simple then $F(X)$ is simple.
            \item If $F(X)\simeq F(Y)$ for two simple $X,Y\in  \overline{\Rep G}$ then $X\simeq Y$.
            \end{enumerate}

            On the other hand, isomorphism classes of simple objects in  $\overline{\Rep G}$ (respectively,  $\overline{\Rep K}$)
            are in bijection with isomorphism classes of indecomposable objects of non-zero superdimension in $\Rep G$ (respectively, $\Rep K$).
            Hence Proposition \ref{prop:full} implies (1) and   Proposition \ref{prop:fullness} implies (2). 
          \end{proof}

          \subsection{Main example} Recall now the theory of Sylow subgroups developed in \cite{SSV}. A minimal splitting subgroup of $G$ is called a Sylow subgroup. It is shown in \cite{SSV} that all Sylow subgroups are conjugate under the action of $G_0$. Further, Theorem 9.14 and the table 9.3 in \cite{SSV} imply the following:
          \begin{proposition}\label{prop:Sylow} Let $G$ be a quasireductive supergroup with Lie superalgebra $\g$. Assume that
            $\g/Z(\g)$ does not contain a simple ideal isomorphic to $\p\s\q(n)$ for $n\geq 3$. Let $S\subset G$ be a Sylow subgroup and $K=N_G(S)$ be the normalizer of $S$. Then $(G,K)$ is a proper pair.
            \end{proposition}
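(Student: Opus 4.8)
The plan is to extract the statement from the classification of Sylow subgroups in \cite{SSV}; the only genuinely new ingredient is a computation of the Duflo--Serganova reduction of the $G$-module $\Ind_K^G\triv$. First I would reduce to the case in which $\g/Z(\g)$ is simple. Both the formation of a Sylow subgroup $S$ and of its normalizer $K=N_G(S)$ are compatible with direct products of supergroups and with quotients by an even central subgroup (which is contained in $N_G(S)$), and properness is multiplicative: if $x^{(i)}\in\k^{(i)}_{\bar1}$ witnesses properness of $(G^{(i)},K^{(i)})$ for $i=1,2$, then $x^{(1)}+x^{(2)}$ witnesses it for $(G^{(1)}\times G^{(2)},K^{(1)}\times K^{(2)})$, since $DS_x$ is symmetric monoidal and $\Ind_{K^{(1)}\times K^{(2)}}^{G^{(1)}\times G^{(2)}}\triv\cong\Ind_{K^{(1)}}^{G^{(1)}}\triv\boxtimes\Ind_{K^{(2)}}^{G^{(2)}}\triv$. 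The residual even data of $G$ (a central torus, the group of components) is shared with $K$ and is left untouched by the relevant $DS_x$. After these reductions the hypothesis leaves precisely the simple quasireductive Lie superalgebras other than $\p\s\q(n)$, $n\geq 3$.

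Next I would reformulate properness geometrically. For quasireductive $G$ the quotient $G/K$ is an affine supervariety and $\Ind_K^G\triv\cong\mathcal O(G/K)$ as a $G$-module. For $x\in\k_{\bar1}^{hom}$ there is a natural isomorphism $DS_x\mathcal O(G/K)\cong\mathcal O((G/K)_x)$ with the coordinate algebra of the Duflo--Serganova reduction of the homogeneous space; since $x$ lies in $\k$, this reduction is built from $G_x=DS_xG$ and the image $K_x$ of $DS_xK$ inside $G_x$. Hence $(G,K)$ is proper precisely when some homological $x\in\k_{\bar1}$ makes $(G/K)_x$ a single point, equivalently $K_x=G_x$. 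The candidate $x$ is then read off from the Sylow data.

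For the case check I would use \cite{SSV}. For each surviving type, Theorem~9.14 and Table~9.3 of \cite{SSV} describe $S$ and $K=N_G(S)$ and supply a homological element of maximal rank inside $S_{\bar1}$: for the basic classical types one takes $x=e_{\alpha_1}+\dots+e_{\alpha_d}$ for a maximal linearly independent family of isotropic roots (as in Example~(3) of the preceding section), with analogous choices for the periplectic series. For this $x$ one computes --- partly from the standard Duflo--Serganova tables, e.g. $DS_x GL(m|n)=GL(|m-n|)$, $DS_x OSp(m|2n)$ an orthogonal or symplectic group, $DS_x P(n)$ trivial or one-dimensional --- that $G_x=DS_xG$ is purely even, and that $DS_x$ collapses $S$ onto an even subgroup in such a way that the natural map $DS_xK\to DS_xG$ is an isomorphism. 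Then $(G/K)_x$ is a point, so $(G,K)$ is proper.

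The main obstacle is precisely this last step: matching the reduction of $N_G(S)$ with that of $G$, type by type, which is essentially the content tabulated in \cite{SSV}; the geometric reformulation and the reduction to the almost-simple case are formal. It is also here that one sees the hypothesis is sharp: for $\p\s\q(n)$ with $n\geq 3$ no homological $x\in\k_{\bar1}$ makes $(G/K)_x$ a single point, since the reductions available in the queer series never simultaneously trivialize the odd directions of $\g$ and the even quotient $G_0/K_0$ --- whence the exclusion of type $Q(n)$.
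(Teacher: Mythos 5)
There is a genuine gap, and it sits exactly at the step you call a ``formal'' reformulation. First, for calibration: the paper gives no argument at all here --- it simply records that the statement follows from Theorem~9.14 and Table~9.3 of \cite{SSV}, so deferring the case-by-case verification to that reference is consistent with what the authors do. The problem is the criterion you use to carry out that verification. You claim that $DS_x\Ind_K^G\triv\cong\mathcal O\bigl((G/K)_x\bigr)$ with $(G/K)_x$ ``built from $G_x$ and $K_x$,'' so that properness is equivalent to $K_x=G_x$ (the fixed locus being a single point). This is false. What the localization theorem (Thm.~6.9 of \cite{SS}) gives is an identification of $DS_x(\C[G/K])$ with the de Rham cohomology of the $x$-fixed locus in $G/K$, and that fixed locus is in general much larger than $G_x/K_x$: it has components indexed by Weyl-group-type data (cosets $gK$ with $\mathrm{Ad}(g^{-1})x$ landing back in $\k$), not just the identity coset.

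The paper's own Section~2.6 refutes your criterion. Take $G=GL(n|n)$, $K'=GL(1|1)^n$ and $u$ of maximal rank: then $\g_u=0$ and $\k'_u=0$, so $DS_uK'\to DS_uG$ is an isomorphism, yet $DS_u(\C[G/K'])\cong H^\bullet_{dR}(H/T)$ is $n!$-dimensional, not $\triv$; so $(G,K')$ fails your test's conclusion even though it passes your test. The reason the \emph{normalizer} $K=N_G(S)$ works is precisely the extra finite group $K/K'\cong S_n$, which acts on the components of the fixed locus (equivalently on $H^\bullet_{dR}(H/T)$) with one-dimensional invariants. In other words, the statement ``$DS_xK\to DS_xG$ is an isomorphism, hence $(G/K)_x$ is a point, hence $(G,K)$ is proper'' does not follow, and the genuinely hard content --- computing $DS_x\C[G/N_G(S)]$ and seeing that the component group of the normalizer collapses the fixed locus, type by type, and seeing where type $Q(n)$, $n\geq 3$, fails --- is exactly what Theorem~9.14 and Table~9.3 of \cite{SSV} encode and what your argument skips. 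Your reduction to the almost-simple case and the multiplicativity of properness under products (using that $DS_x$ is monoidal and $\Ind$ of an external product is the external product of $\Ind$'s) are fine, but they do not repair the central step.
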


        \subsection{Example for which $F$ is not essentially surjective} Let $G=GL(n|n)$, and consider $K=GL(1|1)^n\subseteq G$.  Then $N=N_G(K)=GL(1|1)^n\rtimes S_n$.  We have $N/K\cong S_n$, and thus $\Rep S_n$ is a full subcategory of $\Rep N$.  

        \begin{proposition}
            If $V$ is an irreducible, nontrivial representation of $S_n$, then $V$ is not in the essential image of $F$.
        \end{proposition}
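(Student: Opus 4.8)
The plan is to show, more strongly, that $V$ is not a direct summand of $\Res^G_N M$ for any $G$-module $M$ (here $G=GL(n|n)$). This suffices: since $N$ contains the splitting subgroup $GL(1|1)^n$ it is itself splitting, so $F=\overline{\Res^G_N}$ is defined and $S\circ\Res^G_N=F\circ S$; if $F(X)\cong V$, I pick $M\in\Rep G$ with $S(M)=X$, so that $S(\Res^G_N M)\cong V$ in $\overline{\Rep N}$, and then, decomposing $\Res^G_N M$ into indecomposables and invoking the bijection between simple objects of $\overline{\Rep N}$ and indecomposable $N$-modules of non-zero superdimension (recalled in the proof of Theorem \ref{thm:mainss}), $V$ itself must be one of those summands.

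To rule this out I would use the Duflo--Serganova functor at a generic odd element of $\mathfrak n$. Let $x\in\g_{\bar1}$ be the homological element acting on the standard module $\C^{n|n}$ as the identity from its odd part to its even part and by zero on the even part; equivalently $x$ is the sum of the standard odd nilpotents of the $n$ blocks $\mathfrak{gl}(1|1)$, so $x\in\mathfrak n_{\bar1}$ and $x^2=0$. The subgroup $S_n\subseteq N$, acting on $\C^{n|n}$ by simultaneous permutation of the even and odd coordinates, centralizes $x$; hence $S_n$ preserves $\Ker(x)$ and $\Im(x)$ on every module and acts on $DS_xM$, upgrading $DS_x$ to functors $\Rep G\to\Rep S_n$ and $\Rep N\to\Rep S_n$ which agree after precomposition with $\Res^G_N$ (the space $\Ker(x)/\Im(x)$ and the $S_n$-action on it are computed in the same way whether $M$ is viewed over $G$ or over $N$).

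The heart of the argument is the claim that $S_n$ acts trivially on $DS_xM$ for every $G$-module $M$. The centralizer of $x$ in $G_0=GL(n)\times GL(n)$ is the connected diagonal subgroup $\Delta GL(n)$, which contains $S_n$, and $\Delta GL(n)$ acts trivially on $DS_xM$: one checks that $\operatorname{Lie}(\Delta GL(n))=\Delta\mathfrak{gl}(n)$ equals $[x,\g_{\bar1}]$ (a one-line computation with the off-diagonal odd matrices), and for $y\in\g_{\bar1}$ and $v\in\Ker(x|_M)$ one has $[x,y]v=x(yv)+y(xv)=x(yv)\in\Im(x|_M)$, so $[x,\g_{\bar1}]$, hence $\operatorname{Lie}(\Delta GL(n))$, annihilates $DS_xM=\Ker(x)/\Im(x)$; a connected algebraic group acting with trivial Lie algebra action acts trivially. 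Now suppose $\Res^G_N M\cong V\oplus M'$. Since $\mathfrak n$ annihilates $V$ (it is inflated from $S_n$), the operator $x$ acts by zero on $V$, so the upgraded functor sends $V$ to $V$ as an $S_n$-module; applying it to $\Res^G_N M$ and using the compatibility above gives an isomorphism of $S_n$-modules $V\oplus DS_xM'\cong DS_xM$ in which $S_n$ acts trivially on the right-hand side. Thus $V$ is a trivial $S_n$-module, contradicting that it is irreducible and non-trivial.

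The part that requires care — and which I expect to be the main obstacle to writing cleanly — is the construction in the second paragraph: that $DS_x$ genuinely promotes to an $S_n$-equivariant functor valued in $\Rep S_n$ (via $S_n\subseteq\stab_{G_0}(x)$, resp. $S_n\subseteq\stab_{N_0}(x)$) and that this promotion commutes with $\Res^G_N$. Once that is in place, the only substantive input is the identity $[x,\g_{\bar1}]=\operatorname{Lie}(\stab_{G_0}(x))$ and the resulting triviality statement; everything else is formal. In particular the argument uses neither properness of $(G,N)$ nor fullness of $F$, only that $N$ is a splitting subgroup and the elementary structure of the semisimplification.
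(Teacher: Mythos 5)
Your proof is correct, but it replaces the paper's key input with a different (and more elementary) one, so it is worth comparing. The paper reduces to showing that the single module $DS_u(\C[G/K])$ is trivial as an $S_n$-module, and gets this from the localization theorem (Thm.~6.9 of \cite{SS}), which identifies $DS_u(\C[G/K])$ with $H^\bullet_{dR}(H/T)$, together with the Chevalley--Eilenberg triviality of the $H$-action on that cohomology. You prove the much stronger statement that $DS_x(M)$ is a trivial $S_n$-module for \emph{every} $G$-module $M$: since $S_n\subseteq \Delta GL(n)=\stab_{G_0}(x)$, and $\operatorname{Lie}(\Delta GL(n))=\Delta\mathfrak{gl}(n)=[x,\g_{\bar 1}]$ acts by zero on $\operatorname{Ker}(x)/\operatorname{Im}(x)$ (the standard computation $[x,y]v=x(yv)$ for $v\in\operatorname{Ker}x$), connectedness of $\Delta GL(n)$ in characteristic zero forces the group action to be trivial. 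This makes the argument self-contained (no appeal to the localization theorem or to de Rham cohomology of $H/T$), and it yields the sharper conclusion that no nontrivial $S_n$-representation can even occur as a direct summand of $\Res^G_N M$ for any $G$-module $M$; your reduction from ``essential image of $F$'' to such a summand, via the bijection between simples of $\overline{\Rep N}$ and indecomposables of nonzero superdimension, matches the paper's framework (the paper instead routes the reduction through splitting off $\C[G/K]=\Ind_K^G\C$), and your equivariance/compatibility worries in the second paragraph are harmless since the subgroup $S_n$ and the element $x$ are literally the same inside $G$ whether one works over $G$, $G_0$, or $N$. The only point you should state explicitly (a one-line check) is that $N$ is splitting in $G$ because it contains the splitting subgroup $K=GL(1|1)^n$: the unit $\C\to\Ind_K^G\C$ factors through $\C\to\Ind_N^G\C$, so a retraction for $K$ yields one for $N$. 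What the paper's route buys, by contrast, is the explicit identification of $DS_u(\C[G/K])$ with $H^\bullet_{dR}(H/T)$, which is of independent interest but not needed for the proposition.
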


        \begin{proof}
        Let $u=\begin{bmatrix} 0 & I_n\\ 0 & 0\end{bmatrix}$.  Then $u\in\k_{\overline{1}}^{hom}$, and further $U$ is fixed by the subgroup $H\subseteq G_0$ of invertible matrices $\begin{bmatrix}
            A & 0\\ 0 & A
        \end{bmatrix}$.  In particular our subgroup $S_n\subseteq N$ fixes $u$, and therefore we have a natural action of $S_n$ on $DS_u(M)$ for any $N$-module $M$.  Clearly, $DS_u(V)\cong V$ as an $S_n$-module.  
        
        We have $\Ind_{K}^{G}\C=\Ind_{N}^{G}\C[S_n]$.  Let us suppose that $V$ is in the image of $F$.  Then $V$ must split off $\C[G/K]=\Ind_{K}^{G}\C$ as an $S_n$-module, where $S_n\subseteq N$.  Hence it must also split off $DS_u(\C[G/K])$ as an $S_n$-module.  

        However by Thm.~6.9 of \cite{SS}, $DS_u(\C[G/K])$ is naturally identified, as an $H$-module, with $H_{dR}^\bullet(H/T)$, where $T\subseteq H$ is a maximal torus.  However the $H$-module structure on $H_{dR}^\bullet(H/T)$ is trivial, see for example \cite{ChEi}  In particular, $DS_u(\C[G/K])$ is trivial as an $S_n$-module, giving a contradiction.
        \end{proof}

\section{Defect $1$ supergroups}
\subsection{Defect $1$ supergroups and splitting subgroups}
Let $G$ be a connected affine algebraic supergoup with basic Lie superalgebra $\g$ of defect $1$. Here is the complete list of such supergroups (up to isogeny) : with $n>0,m>0$ we have
$SL(1|n+1)$, $SOSp(2|2n)$, $SOSp(3|2n)$, $SOSp(2m|2)$, $SOSp(2m+1|2)$, $D(2,1;t)$, $AG_2$ ($G_3$ in Kac notations) and $AB_3$ ($F_4$ in Kac notations). \color{red}\begin{footnote} {Some of these supergoups are not simply connected, but the stable categories of a group and its simply connected cover coincide in all our examples.} \end{footnote}\color{black}

Fix a Cartan subalgebra and root decomposition of $\g$. Then a root $\s\l(1|1)$-subalgebra is a Sylow subalgebra of $\g$, \cite{SSV}.
We denote this superalgebra by $\s$ and by $S$ the corresponding algebraic subgroup of $G$. For any non-zero $x\in\s_{\bar 1}$ we have $\g_x:=DS_x\g$ is a quasireductive
Lie superalgebra and $\g_x\simeq \g_y$ for any two $x,y\in\s_{\bar 1}$. By $G_x$ we denote the minimal algebraic subgroup of $G$ with Lie superalgebra
$\g_x$. Denote by $K$ the normalizer of $S$ in $G$. The connected component $K_\circ\subset K$ is isomorphic  to $GL(1|1)\times G_x$ (unless $G=D(2,1;t)$ for $t\notin\mathbb Q$) and
$K=K_\circ$ if $\g$ is of type I, i.e. $\g=\g\l(1|n)$ or $\mathfrak{osp}(2|2n)$; $K/K_\circ\simeq C_2$ in all other cases. Note that the choice of
$G_x$ does not depend on the choice of $x\in \s_{\overline{1}}$.

The table below describes the supergroup $K_\circ$ in all cases except $D(2,1;t)$ for irrational $t$. In the latter case $S$ is a $2|2$-dimensional supergroup, and $K_\circ=\mathbb G_m\ltimes S$ (see Appendix to \cite{SS} for an explicit description of $K_\circ$). The finite central subgroup $\Gamma\subset G_x$ is defined as follows. Let $\alpha$ be an isotropic root, and $x\in \g_\alpha$. Let $P$ be the weight lattice of $G$ and $P_x$ the weight lattice of $G_x$. The lattice
$P_\alpha=\alpha^\perp\cap P_x$ is a sublattice of $P_x$ of finite index and $\Gamma:=P_x/P_\alpha$. The importance of $\Gamma$ will be clear in Theorem \ref{st-def1} below.

\begin{center}
		\begin{tabular}{|c|c|c|}
			\hline 
			$G$ & $G_x$ & $\Gamma$\\
			\hline
			$GL(1|n)$ & $GL(n-1)$ & $\{1\}$\\
                  \hline
                  $SOSp(2|2n)$ & $SP(2n-2)$ & $\{1\}$ \\
			\hline 
			$SOSp(m|2)$ & $SO(m-2)$ & $\{1\}$ \\
			\hline 
			$SOSp(3|2n)$ & $SOSp(1|2n-2)$ & $\{1\}$ \\	
			\hline 
			$D(2,1;\frac{p}{q}), 0<p\leq q, (p,q)=1$ & $\mathbb G_m$ & $\{1\}$ \\ 
			\hline 
			$AG(1|2)$ & $SL(2)$ & $C_2$\\
			\hline 
			$AB(1|3)$ & $SL(3)$ & $C_3$ \\
			\hline 
			\end{tabular}
	\end{center}

        \subsection{On the category $\Rep K$}
Since $S$ is a Sylow subgroup of $K$ we have the following
      \begin{lemma}\label{Kprojectivity} A $K$-module $M$ is projective if and only if the restriction of $M$ to $S$ is projective. \end{lemma}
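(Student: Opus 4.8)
The plan is to derive both implications from two general facts about the quasireductive pair $(K,S)$. \textbf{Fact A:} since $S$ is a \emph{splitting} subgroup of $K$, every rational $K$-module $M$ is a direct summand of $\Ind^K_S\Res_S M$. This is precisely the construction from the proof of Proposition~\ref{prop:splitting}: the hypothesized splitting of the adjunction unit $\triv\to\Ind^K_S\triv$ yields, via the canonical isomorphism $\Ind^K_S\Res_S M\cong(\Ind^K_S\triv)\otimes M$, a splitting of the canonical embedding $i_M\colon M\to\Ind^K_S\Res_S M$. \textbf{Fact B:} both $\Res^K_S$ and $\Ind^K_S$ send projective modules to projective modules. (It is convenient here to work in the categories of all rational modules, where $\Ind^K_S$ is defined even though $K/S$ need not be finite; a finite-dimensional module is projective there iff it is projective in $\Rep$.)

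To prove Fact B, recall that the categories of rational $K$- and $S$-modules are Frobenius, so projective coincides with injective in each. The functor $\Res^K_S$ is exact and its right adjoint $\Ind^K_S$ is again exact, because the homogeneous space $K/S$ is affine. A functor admitting an exact right adjoint preserves projectives, so $\Res^K_S$ does; and $\Ind^K_S$ admits the exact left adjoint $\Res^K_S$, so it preserves injectives, hence — by the Frobenius property — projectives.

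With Facts A and B in hand the lemma is purely formal. If $M$ is a projective $K$-module, then $\Res_S M$ is a projective $S$-module by Fact B. Conversely, if $\Res_S M$ is projective over $S$, then $\Ind^K_S\Res_S M$ is projective over $K$ by Fact B, and $M$ is projective as its direct summand by Fact A. The only step that is not a formal manipulation with adjunctions and the Frobenius property is the exactness of $\Ind^K_S$, equivalently the affineness of $K/S$; this is where the hypotheses genuinely enter ($K_0/S_0$ is a quotient of linearly reductive groups over a field of characteristic zero, hence affine), and it is the point I would expect to spell out carefully.
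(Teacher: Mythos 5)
Your argument is correct, but note that the paper does not actually spell out a proof at this point: the lemma is stated as an immediate consequence of $S$ being a Sylow (i.e.\ minimal splitting) subgroup of $K$, relying on the detection-of-projectivity property of splitting subgroups established in \cite{SSV}. What you have written is essentially the standard argument that backs up that citation, and all its steps hold: Fact A is verbatim the mechanism used in the proof of Proposition \ref{prop:splitting}, and Fact B is the usual adjunction bookkeeping once $\Ind^K_S$ is known to be exact. Two remarks on how your route compares. First, you can get exactness of $\Ind^K_S$ more cheaply than via Matsushima-type affineness of $K_0/S_0$: in the situation of the lemma $K=N_G(S)$, so $S$ is normal in $K$ and $K/S$ is an affine quotient supergroup, hence affine on the nose. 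Second, the point you flag — passing between projectivity in the category of all rational modules and in the finite-dimensional category $\Rep K$ — is genuinely where care is needed, since $\Ind^K_S\Res_S M$ is infinite dimensional; it is standard for quasireductive supergroups (injective hulls of finite-dimensional modules are finite dimensional and injectives coincide with projectives), but an alternative that avoids the big category entirely is to use the relative-projectivity characterization the paper lists later: since $S$ splits $K$, the restriction map $\Ext^1_K(M,N)\to\Ext^1_S(M,N)$ is injective for all finite-dimensional $N$, so projectivity of $\Res_S M$ forces $\Ext^1_K(M,-)=0$ and hence, by the Frobenius property of $\Rep K$, projectivity of $M$; the converse direction is just that restriction preserves projectives. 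Either version is a complete proof of the lemma; yours buys a self-contained statement at the cost of the comodule-category generalities, while the paper's one-line appeal to the Sylow theory (or the $\Ext$-transfer argument) stays inside finite-dimensional representations.
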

      \begin{lemma}\label{indecomposable-def1} Assume $G\neq D(2,1;t)$, $t\notin \mathbb Q$.
        Let $M$ be an indecomposable $K_\circ$-module. Then $M=I\boxtimes L$ where $L$ is a simple $G_x$-module and
        $I$ is an indecomposable $GL(1|1)$-module.
      \end{lemma}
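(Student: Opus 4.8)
The plan is to reduce everything to the semisimplicity of $\Rep G_x$. First I would go through the table above: in every case $G_x$ is either a linearly reductive algebraic group --- $GL(n-1)$, $SP(2n-2)$, $SO(m-2)$, $\mathbb G_m$, $SL(2)$ or $SL(3)$ --- or the supergroup $SOSp(1|2n-2)$, and in the latter case finite-dimensional representations are completely reducible by the classical complete reducibility theorem for $\mathfrak{osp}(1|2n)$. So in all cases every finite-dimensional $G_x$-module is a direct sum of simple modules.

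Second, using the hypothesis $G\neq D(2,1;t)$ with $t\notin\mathbb Q$, I would invoke the identification $K_\circ\cong GL(1|1)\times G_x$: a $K_\circ$-module is then a super vector space carrying commuting actions of $GL(1|1)$ and of $G_x$. Restricting $M$ to $G_x$ and using semisimplicity, I would form the isotypic decomposition $M=\bigoplus_L M_L$ with $M_L\cong\Hom_{G_x}(L,M)\otimes L$ (the $\Hom$ formed in the category of super vector spaces), where $L$ runs over the simple $G_x$-modules occurring in $M$. Since the $GL(1|1)$-action commutes with the $G_x$-action it preserves each $M_L$, so each $M_L$ is a $K_\circ$-submodule; indecomposability of $M$ then forces $M=M_L$ for a single simple $G_x$-module $L$.

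Finally I would set $I:=\Hom_{G_x}(L,M)$, viewed as a $GL(1|1)$-module via the residual action, and observe that evaluation gives a $K_\circ$-equivariant isomorphism $M\cong I\boxtimes L$. Any nontrivial decomposition $I=I_1\oplus I_2$ as a $GL(1|1)$-module would yield $M\cong(I_1\boxtimes L)\oplus(I_2\boxtimes L)$, contradicting indecomposability of $M$; hence $I$ is an indecomposable $GL(1|1)$-module, which is exactly the assertion.

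I do not expect a serious obstacle. The two points requiring care are: having the semisimplicity of $\Rep G_x$ at hand, in particular for $SOSp(1|2n-2)$; and checking that the simple $G_x$-modules appearing are all of type $M$, so that the $G_x$-isotypic decomposition and the ``$\Hom$ out of a simple'' behave exactly as in the non-super setting (for the even reductive $G_x$ this is automatic, and for $\mathfrak{osp}(1|2n)$ all simples are of type $M$). Granting these, the argument is routine.
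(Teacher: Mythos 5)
Your argument is correct and is essentially the paper's own proof, which likewise rests on $K_\circ\cong GL(1|1)\times G_x$ together with linear reductivity of $G_x$ and the splitting of the $G_x$-isotypic components; you merely spell out the case-by-case semisimplicity and the identification $I=\Hom_{G_x}(L,M)$ that the paper leaves implicit. Your side remark about all simples of $G_x$ being of type $M$ (so the isotypic/$\Hom$ formalism works as in the even case) is a legitimate point of care and holds in every case in the table.
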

      \begin{proof}Note that $G_x$ is a linearly reductive supergroup and $K_\circ=GL(1|1)\times G_x$. Every isotypic $G_x$-component of $M$ splits as a direct summand. Hence $M=I\boxtimes L$
        \end{proof}

        Suppose $\g$ is of type II. Then we have an exact sequence
        $$1\to K_\circ\to K\to C_2\to 1.$$
        This sequence defines an automorphism $\theta\in \operatorname{Aut}K_\circ/\operatorname{Inn}K_\circ$. One can check that $\theta$ 
        maps $\g_\alpha$ to $\g_{-\alpha}$ and induces an outer automorphism of $GL(1|1)\subset K$ (or of $S\subseteq K$ for $D(2,1;t)$ with $t\notin\mathbb Q$).
        \begin{corollary}\label{type2} Assume $G\neq D(2,1;t)$, $t\notin \mathbb Q$ and $G$ is of type II. Then an indecomposable $K$-module $M$ is one of the following
          \begin{enumerate}
          \item $M\simeq \Ind^K_{K_\circ}(I\boxtimes L)$ where either $I^\theta$ is not isomorphic to $I$ or $L^\theta$ is not isomorphic to $L$.
          \item $M$ is isomorphic to one of the two indecomposable components of $\Ind^K_{K_\circ}(I\boxtimes L)$ where $I^\theta\simeq I$ and
            $L^\theta\simeq L$.
            \end{enumerate}
          \end{corollary}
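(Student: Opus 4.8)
The statement is an instance of Clifford theory for the index-two normal subgroup $K_\circ\triangleleft K$, so the plan is to reduce it to a classification of the induced modules $\Ind^K_{K_\circ}N$ with $N$ indecomposable over $K_\circ$. Let $M$ be an indecomposable $K$-module. Since $2$ is invertible, $\C[K/K_\circ]\cong\triv\oplus\operatorname{sgn}$, where $\operatorname{sgn}$ is the inflation of the nontrivial character of $C_2$; hence by the projection formula $\Ind^K_{K_\circ}\Res_{K_\circ}M\cong M\otimes\C[K/K_\circ]\cong M\oplus(M\otimes\operatorname{sgn})$. Writing $\Res_{K_\circ}M=\bigoplus_j N_j$ as a sum of indecomposables and applying Krull--Schmidt, $M$ is isomorphic to an indecomposable summand of $\Ind^K_{K_\circ}N$ for some $N=N_j$; by Lemma \ref{indecomposable-def1}, $N\cong I\boxtimes L$ with $I$ an indecomposable $GL(1|1)$-module and $L$ a simple $G_x$-module. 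A Mackey computation over the two cosets of $K_\circ$ in $K$ gives $\Res_{K_\circ}\Ind^K_{K_\circ}N\cong N\oplus N^\theta$. Granting that the outer automorphism $\theta$ may be represented by an automorphism of $K_\circ$ preserving the two factors $GL(1|1)$ and $G_x$ --- a point to be checked case by case from the explicit description of $K_\circ$ --- we have $N^\theta\cong I^\theta\boxtimes L^\theta$, and restriction to each factor shows $N^\theta\cong N$ precisely when $I^\theta\cong I$ and $L^\theta\cong L$.

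I would then split into the two cases of the statement. If $N^\theta\not\cong N$, I claim $\Ind^K_{K_\circ}N$ is indecomposable, which puts $M$ in situation (1) with $M\cong\Ind^K_{K_\circ}N$. Indeed, a nontrivial decomposition $\Ind^K_{K_\circ}N\cong A\oplus B$ would restrict over $K_\circ$ to a decomposition of $N\oplus N^\theta$, so by Krull--Schmidt some summand, say $\Res_{K_\circ}A$, is isomorphic to $N$; but for any $K$-module $A$, conjugation by an element of $K\setminus K_\circ$ gives $(\Res_{K_\circ}A)^\theta\cong\Res_{K_\circ}A$, forcing $N^\theta\cong N$, a contradiction.

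If instead $N^\theta\cong N$, I want to show $\Ind^K_{K_\circ}N$ is a direct sum of exactly two non-isomorphic indecomposables, putting $M$ in situation (2). By adjunction $\End_K(\Ind^K_{K_\circ}N)\cong\Hom_{K_\circ}(N,N\oplus N^\theta)$. Every $G_x$ in the table has a semisimple representation category with absolutely irreducible simple objects, so $\End_{G_x}(L)=\C$, and therefore $R:=\End_{K_\circ}(N)\cong\End_{GL(1|1)}(I)$ is a finite-dimensional local $\C$-algebra with residue field $\C$. Choosing an isomorphism $N^\theta\cong N$ exhibits $\End_K(\Ind^K_{K_\circ}N)$ as a crossed product $R\rtimes C_2$, where $C_2$ acts on $R$ through $\theta$ and there is a cocycle valued in $R^\times$. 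The two-sided ideal generated by $\operatorname{rad}R$ is nilpotent with semisimple quotient, hence is the Jacobson radical, and $\End_K(\Ind^K_{K_\circ}N)/\operatorname{rad}\cong(R/\operatorname{rad}R)\rtimes C_2$ is a twisted group algebra of $C_2$ over $\C$; since the induced action on $\C$ is trivial and $H^2(C_2,\C^\times)=0$, this is just $\C[C_2]\cong\C\times\C$. So the identity of $\End_K(\Ind^K_{K_\circ}N)$ splits into exactly two primitive orthogonal idempotents, giving $\Ind^K_{K_\circ}N\cong A\oplus B$ with $A,B$ indecomposable and $A\not\cong B$, and $M$ is isomorphic to $A$ or $B$.

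The main obstacle is the case $N^\theta\cong N$, namely showing that $\Ind^K_{K_\circ}N$ genuinely \emph{decomposes} into exactly two pieces rather than remaining indecomposable. This is carried by the crossed-product computation, whose essential inputs are that $\End_{K_\circ}(N)$ is local with residue field $\C$ --- which in turn rests on $\End_{G_x}(L)=\C$ for all the supergroups $G_x$ in the table --- and that twisted group algebras of $C_2$ over $\C$ are split semisimple (this is where $2\in\C^\times$ is used). A secondary technical point to pin down is that $\theta$ preserves the decomposition $K_\circ=GL(1|1)\times G_x$, which is needed even for the dichotomy in the statement to make sense.
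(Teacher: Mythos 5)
Your proposal is correct and follows the route the paper intends: Corollary \ref{type2} is stated without proof as a direct consequence of Lemma \ref{indecomposable-def1} together with standard Clifford theory for the index-two subgroup $K_\circ\subset K$ (induction/restriction, Mackey, and the dichotomy according to whether $I\boxtimes L$ is $\theta$-stable), which is exactly what you carry out. Your crossed-product computation of $\End_K(\Ind^K_{K_\circ}(I\boxtimes L))$ in the $\theta$-stable case is just the standard verification the paper leaves implicit, so there is nothing essentially different to compare.
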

          Consider the contravariant functor $(-)^{\vee}:\Rep G\to\Rep G$ of contragredient duality, and note that it descends to $\Rep K\to\Rep K$.
          \begin{proposition}\label{imp-aux} Let $M$ be an indecomposable $K$-module and $\sdim M\neq 0$. If $M^\vee\simeq M$ then $M$ is simple.
          \end{proposition}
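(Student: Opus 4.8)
The plan is to reduce the statement, via the structure of $\Rep K$ obtained above, to an assertion about indecomposable $GL(1|1)$-modules: \emph{an indecomposable $GL(1|1)$-module $I$ with $\sdim I\ne 0$ which is fixed by contragredient duality (or by the duality twisted by $\theta$) is simple.} This last fact can be read off from the classification of indecomposable $GL(1|1)$-modules, and is where the content lies. First suppose $\g$ is of type I, so $K=K_\circ=GL(1|1)\times G_x$. By Lemma \ref{indecomposable-def1} we may write $M\cong I\boxtimes L$ with $L$ a simple $G_x$-module and $I$ an indecomposable $GL(1|1)$-module, and this decomposition is unique up to isomorphism (compare $G_x$-isotypic components). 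Since $\sdim M=\sdim I\cdot\sdim L\ne 0$ we get $\sdim I\ne 0$; and since contragredient duality respects the product decomposition and $G_x$ is linearly reductive, $M^\vee\cong I^\vee\boxtimes L^\vee$ with $L^\vee$ simple and $I^\vee$ indecomposable, so $M^\vee\cong M$ forces $I^\vee\cong I$. Granting the displayed $GL(1|1)$-fact, $I$ and $L$ are then both simple, hence so is $M=I\boxtimes L$.

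If $\g$ is of type II (and $G\ne D(2,1;t)$ with $t$ irrational) I use Corollary \ref{type2}. If $M=\Ind^K_{K_\circ}(I\boxtimes L)$ with $(I\boxtimes L)^\theta\not\cong I\boxtimes L$, then $\Res_{K_\circ}M=(I\boxtimes L)\oplus(I\boxtimes L)^\theta$; as $\theta$ preserves parity, $\sdim M=2\,\sdim(I\boxtimes L)\ne0$, so $\sdim I\ne 0$, and comparing $\Res_{K_\circ}M^\vee\cong\Res_{K_\circ}M$ as $K_\circ$-modules gives $(I\boxtimes L)^\vee\cong I\boxtimes L$ or $(I\boxtimes L)^\vee\cong(I\boxtimes L)^\theta$, hence $I^\vee\cong I$ or $I^\vee\cong I^\theta$. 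If instead $M$ is one of the two summands of $\Ind^K_{K_\circ}(I\boxtimes L)$ with $I^\theta\cong I$ and $L^\theta\cong L$, then $\Res_{K_\circ}M\cong I\boxtimes L$ and one gets directly $\sdim I\ne 0$ and $I^\vee\cong I$. In all cases, the displayed fact (and its $\theta$-twist) shows $I$ and $L$ are simple, so $I\boxtimes L$ is a simple $K_\circ$-module and $M$ is simple by Clifford theory.

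It remains to establish the $GL(1|1)$-fact. An indecomposable $GL(1|1)$-module in a typical block is its own (semisimple) block and has superdimension $0$, so $I$ must lie in an atypical block. There the simples $L_j$, $j\in\mathbb{Z}$, satisfy $L_j^\vee\cong L_{-j}\cong L_j^\theta$ and $\sdim L_j=\pm1$ with the sign alternating in $j$, and $\sdim$ is additive along composition series. Using the classification of indecomposable $GL(1|1)$-modules, a non-simple indecomposable has connected support in the $\Ext$-quiver and its head and socle are nonzero semisimple modules interchanged (up to $j\mapsto-j$) by $(-)^\vee$; one checks that such a module can be fixed by $(-)^\vee$ or by the $\theta$-twisted duality only if it is projective, and projective indecomposables have superdimension $0$. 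This contradicts $\sdim I\ne 0$ unless $I$ is simple. Finally, for $G=D(2,1;t)$ with $t$ irrational one argues identically, replacing the $GL(1|1)$-classification with the description of $\Rep K_\circ=\Rep(\mathbb{G}_m\ltimes S)$ from the appendix of \cite{SS}.

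The main obstacle is this last step: ruling out a non-simple indecomposable $GL(1|1)$-module that is simultaneously fixed by contragredient duality (or its $\theta$-twist) and of nonzero superdimension. The only other point requiring care is the Clifford-theoretic bookkeeping in the type II case — in particular, keeping track of the interaction between $\theta$ and contragredient duality and of which summand of an induced module is under consideration.
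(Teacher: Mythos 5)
Your overall route is the same as the paper's: reduce via Lemma \ref{indecomposable-def1} and Corollary \ref{type2} to the classification of indecomposable $GL(1|1)$-modules, and then play (possibly $\theta$-twisted) contragredient duality against the nonzero superdimension. The bookkeeping in types I and II (uniqueness of $I\boxtimes L$, Krull--Schmidt on $\Res_{K_\circ}M$, Clifford theory for the $C_2$-extension) is fine. But the crux --- your displayed ``$GL(1|1)$-fact'' that an indecomposable $I$ with $\sdim I\neq 0$ fixed by $(-)^\vee$ or its $\theta$-twist is simple --- is exactly where you stop and write ``one checks,'' and you yourself flag it as the main obstacle. As it stands this is a genuine gap: you assert the stronger statement that a non-simple indecomposable fixed by (twisted) duality must be projective, which is more than you need and is not verified, while the statement you actually need follows from a one-line count that you never make. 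Namely: by the classification (\cite{ESS}), a non-simple, non-projective indecomposable in the atypical block has Loewy length $2$; since the simple constituents have superdimension $\pm 1$ with alternating sign, $\sdim I=\pm(\dim\operatorname{soc}I-\dim\operatorname{cosoc}I)$, so $\sdim I\neq 0$ forces $\dim\operatorname{soc}I\neq\dim\operatorname{cosoc}I$. On the other hand, both $(-)^\vee$ and its $\theta$-twist exchange socle and cosocle while preserving dimensions, so $I^\vee\simeq I$ (or $I^\vee\simeq I^\theta$) would force these dimensions to be equal --- a contradiction; projectives are excluded since they have superdimension $0$. This is precisely the paper's argument, run directly on the indecomposable $K$-module $M$ rather than on the $GL(1|1)$-factor, which also spares you the weight-support analysis your stronger ``only if projective'' claim would require.

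Two smaller points. First, the paper does not need to extract $I$ and compare it with $I^\theta$ at all: Lemma \ref{indecomposable-def1} and Corollary \ref{type2} already give that a non-simple $M$ of nonzero superdimension has Loewy length $2$ with $\dim\operatorname{soc}M\neq\dim\operatorname{cosoc}M$, and $\sdim\operatorname{soc}M^\vee=\sdim\operatorname{cosoc}M$ kills self-duality immediately; so once you close the gap above, your Clifford-theoretic detour in type II can be shortened. Second, for $G=D(2,1;t)$ with $t\notin\mathbb{Q}$ your appeal to ``argue identically using the appendix of \cite{SS}'' is vaguer than necessary: the paper observes that $\sdim M\neq 0$ forces the center of $K$ to act trivially and $K/Z(K)\simeq C_2\ltimes PGL(1|1)$, after which the previous argument applies verbatim; you should either use this reduction or actually spell out the structure of indecomposables for $\mathbb{G}_m\ltimes S$ in that case.
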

          \begin{proof} First we assume that $G\neq D(2,1;t)$, $t\notin \mathbb Q$. Suppose $M$ is not simple. From Lemma \ref{indecomposable-def1}, Corollary \ref{type2} and the description of indecomposable $GL(1|1)$-modules (see, for instance, \cite{ESS}) we have that  $M$
          has Loewy length $2$ and $\dim \operatorname{soc} M\neq \dim\operatorname{cosoc}M$ since $\sdim M=\pm(\dim \operatorname{soc}M-\dim\operatorname{cosoc}M)$. Since $\sdim\operatorname{soc} M^\vee= \sdim\operatorname{cosoc}M$ we get a contradiction.

            Now let us consider the case $G=D(2,1;t)$ with $t\notin \mathbb Q$. If $\sdim M\neq 0$ then the center of $K$ acts trivially on $M$.
            Since $K/Z(K)\simeq C_2\ltimes PGL(1|1)$ we can repeat all above arguments.
          \end{proof}
          We now describe the structure of the principal block $\Rep^0K$  of $\Rep K$. It follows immediately from the above description of indecomposable modules in $\Rep K$  that $G_x$ acts trivially on all modules in $\Rep^0K$. Therefore, we need to describe the principal blocks of $K/G_x$, which is isomorphic to $PGL(1|1)$ for $\g$ of type I and $C_2\ltimes PGL(1|1)$ for $\g$ of type II.
      \begin{proposition}\label{principalK} 
        \begin{enumerate}
        \item For $\g$ of type I  any  simple module in  $\Rep^0K$ is the one-dimensional $PGL(1|1)$-module $T_n$ with weight $n\in\mathbb{Z}$.
          We have $\sdim T_n=(-1)^n$. 
        \item Let $\g$ be of type II. There are two $1|0$-dimensional simple modules in  $\Rep^0K$-- $\mathbb C$ and $U$, with $U$ being the sign module
          of $C_2$ with trivial action of $PGL(1|1)$. All other simple modules are two-dimensional and isomorphic to $\Ind^K_{K_\circ}T_n$ for $n>0$.
        \end{enumerate}        
      \end{proposition}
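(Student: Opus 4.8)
The plan is to use the reduction just made: every module in $\Rep^0 K$ is inflated from $K/G_x$, which is $PGL(1|1)$ for $\g$ of type I and $C_2\ltimes PGL(1|1)$ — with $C_2$ acting through the outer automorphism $\theta$ — for $\g$ of type II. So it suffices to classify the simple $PGL(1|1)$-modules, resp. $(C_2\ltimes PGL(1|1))$-modules, occurring in $\Rep^0 K$ and to compute their superdimensions. (For $G=D(2,1;t)$ with $t\notin\mathbb Q$ I would run exactly the same argument with $GL(1|1)$ replaced by the $2|2$-dimensional Sylow group $S$ and $PGL(1|1)$ by $S/Z(S)$, as is done for the analogous point in the proof of Proposition \ref{imp-aux}.)

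First I would analyse $PGL(1|1)$ through its Lie superalgebra $\mathfrak p=\mathfrak{gl}(1|1)/\C I$, which has one-dimensional even part $\C h$ and two-dimensional odd part $\C e\oplus\C f$ (images of $E_{12},E_{21}$), with $[e,e]=[f,f]=[e,f]=0$ — the bracket $[E_{12},E_{21}]=E_{11}+E_{22}$ dying in the quotient — so that $e^2=f^2=0$ in $U(\mathfrak p)$. A highest-weight-vector argument then shows every simple $PGL(1|1)$-module is $1$-dimensional: the top torus-weight space is killed by $e$, and $e^2=f^2=0$ forces a simple module to be spanned by a highest weight vector $v$ and $fv$, but then $\C fv$ is a proper submodule once $fv\neq 0$ (since $e(fv)=-f(ev)=0$ and $f(fv)=0$), contradicting simplicity. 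Hence the simple $PGL(1|1)$-modules are exactly the characters $T_n$, indexed by the torus weight $n\in\mathbb Z$; as $e,f$ shift the weight by $\pm1$ all the $T_n$ lie in a single block, and as $e,f$ are odd they also reverse parity, so the parity of $T_n$ alternates with $n$; since $T_0=\triv$ is even this gives $\sdim T_n=(-1)^n$, which is (1). (Alternatively, $\Rep PGL(1|1)$ is the trivial-central-character part of $\Rep GL(1|1)$, so (1) also follows from the known structure of the latter, \cite{ESS}.)

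For (2) I would apply Clifford theory to the normal subgroup $PGL(1|1)\trianglelefteq C_2\ltimes PGL(1|1)$, which is clean because $\C[C_2]$ is semisimple. Since $\theta$ carries $\g_\alpha$ to $\g_{-\alpha}$, on $\mathfrak p$ it interchanges $e$ and $f$ and sends $h\mapsto -h$, so it negates the torus weight and $\theta^*T_n\cong T_{-n}$ (parity unchanged). Thus the $\theta$-fixed simple $T_0=\triv$ extends to $C_2\ltimes PGL(1|1)$ in exactly $|C_2|=2$ ways, namely $\triv$ and $U:=\operatorname{sgn}_{C_2}\boxtimes\triv$, both $1|0$-dimensional; while for $n\neq 0$, since $\theta^*T_n\not\cong T_n$, the module $\Ind^K_{K_\circ}T_n\cong T_n\oplus T_{-n}$ is simple of dimension $2$, with $\Ind^K_{K_\circ}T_n\cong\Ind^K_{K_\circ}T_{-n}$, so indexed by $n>0$. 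The simple modules occurring in $\Rep^0 K$ are then precisely $\triv$, $U$ and the $\Ind^K_{K_\circ}T_n$ with $n>0$ (one verifies membership in $\Rep^0 K$ by restricting to $K_\circ$), which is (2).

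The step I expect to be the main obstacle is not the classification of simples or the Clifford-theory bookkeeping — both routine — but the parity and block accounting: establishing $\sdim T_n=(-1)^n$ requires tracking carefully how the odd elements of $\mathfrak p$ interact with the parity shift $\Pi$, and pinning down exactly which simples of $K/G_x$ make up $\Rep^0 K$ (rather than the whole of $\Rep(K/G_x)$) is a delicate linkage question that, over $\C$, behaves rather differently than in the modular setting.
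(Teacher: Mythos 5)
Your proposal is correct and follows essentially the same route as the paper, which disposes of (1) by noting $PGL(1|1)/[PGL(1|1),PGL(1|1)]=\mathbb G_m$ (your highest-weight computation with $e^2=f^2=[e,f]=0$ is just the proof of that reduction, and the parity-alternation argument for $\sdim T_n=(-1)^n$ is the intended, if unstated, justification), and of (2) by observing that every simple of $\Rep^0K$ is a summand of $\Ind^K_{K_\circ}T_n$, which is irreducible for $n\neq 0$ since $\theta$ sends $T_n$ to $T_{-n}$, and equals $\triv\oplus U$ for $n=0$ — i.e.\ exactly your Clifford-theory bookkeeping. The only looseness, shared with the paper, is that block membership of $U$ and the $\Ind^K_{K_\circ}T_n$ in $\Rep^0K$ is asserted rather than checked, but the statement as phrased only needs the classification direction.
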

      \begin{remark} Although we don't use it we would like to mention that for $\g$ of type
      I the principal block $\Rep^0K$ is equivalent to the category of finite dimensional
      representation of the quiver $A_\infty$ 
      $$\cdots\rightleftarrows\bullet\rightleftarrows\bullet\rightleftarrows \cdots$$
      with the relations $xy=yx$, $x^2=y^2=0$, see \cite{J}. Here we denote by $x$ and $y$ left and right arrows, respectively.

      If $\g$ has type II one can show that $\Rep^0 K$ is equivalent to the category of finite-dimensional representations of $D_\infty$
       $$\begin{matrix} \bullet&{} &{}&{}&{}\\
       \downarrow\uparrow &{} &{}&{}&{}\\
       \bullet &\rightleftarrows &\bullet&\rightleftarrows&\cdots\\
       \downarrow\uparrow &{} &{}&{}&{}\\
       \bullet&{} &{}&{}&{}\\
      \end{matrix}$$
       with the similar relations.
      \end{remark}
      
      \begin{proof} (1) Follows from the identity $$PGL(1|1)/[PGL(1|1),PGL(1|1)]=\mathbb G_m.$$
      (2) A simple module in $\Rep^0 K$ is a direct summand in $\Ind^K_{K_\circ}T_n$, the latter is irreducibe unless $n=0$. For $n=0$ we have $\Ind^K_{K_\circ}\triv=\triv\oplus U$.
    \end{proof}
      \begin{corollary}\label{stable-principalK} The stable category $\St^0K$ of $\Rep^0K$ is generated, as a tensor triangulated category, by $\mathbb C$.
        \end{corollary}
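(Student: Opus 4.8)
The plan is to prove that every simple object of $\Rep^0K$ lies in the thick triangulated subcategory $\langle\C\rangle\subseteq\St^0K$ generated by the tensor unit $\C$. This suffices: every object of $\Rep^0K$ has a finite filtration with semisimple quotients, and every short exact sequence $0\to A\to B\to C\to0$ in $\Rep^0K$ becomes a distinguished triangle $A\to B\to C\to A[1]$ in $\St^0K$, so an induction on Loewy length shows that the thick triangulated subcategory generated by the simple objects is all of $\St^0K$. In particular this gives the (formally weaker) statement that $\C$ generates $\St^0K$ as a tensor triangulated category. Thus the real task is to propagate membership in $\langle\C\rangle$ along the quivers of Proposition~\ref{principalK}, and the key input is the Loewy structure of the indecomposable projectives recalled in the Remark.

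For $\g$ of type I, the simples are the one-dimensional modules $T_n$ ($n\in\mathbb Z$), with $T_0=\C$, and the projective cover $P_n$ of $T_n$ has simple socle and cosocle $T_n$ with $\operatorname{rad}P_n/\operatorname{soc}P_n\cong T_{n-1}\oplus T_{n+1}$. Hence $\Omega^{-1}T_n:=P_n/\operatorname{soc}P_n$ sits in a short exact sequence $0\to T_{n-1}\oplus T_{n+1}\to\Omega^{-1}T_n\to T_n\to0$, and $\Omega^{-1}T_n\cong T_n[1]$ in $\St^0K$. Rotating the attached triangle to $T_n[1]\to T_n\to(T_{n-1}\oplus T_{n+1})[1]\to T_n[2]$ realizes $(T_{n-1}\oplus T_{n+1})[1]$ as a cone of a map between shifts of $T_n$, so $T_{n-1}\oplus T_{n+1}\in\langle T_n\rangle$ and therefore $T_{n\pm1}\in\langle T_n\rangle$ by thickness. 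Inducting on $|n|$ starting from $T_0=\C$ gives $T_n\in\langle\C\rangle$ for all $n$.

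For $\g$ of type II — and also for $G=D(2,1;t)$ with $t\notin\mathbb Q$, where one first passes to $K/Z(K)\cong C_2\ltimes PGL(1|1)$ exactly as in the proof of Proposition~\ref{imp-aux} — the simples are $\C$, $U$, and $V_n:=\Ind^K_{K_\circ}T_n$ for $n>0$. Inducing the projective covers of Lemma~\ref{indecomposable-def1} from $K_\circ$ to $K$ yields the Loewy structure over $K$: the projective cover of $\C$ (and of $U$) has $\operatorname{rad}/\operatorname{soc}\cong V_1$; that of $V_1$ has $\operatorname{rad}/\operatorname{soc}\cong\C\oplus U\oplus V_2$; and that of $V_n$ for $n\ge2$ has $\operatorname{rad}/\operatorname{soc}\cong V_{n-1}\oplus V_{n+1}$. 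The same triangle argument then gives, in order: $V_1\in\langle\C\rangle$ (from $0\to V_1\to\Omega^{-1}\C\to\C\to0$); $U,V_2\in\langle V_1\rangle\subseteq\langle\C\rangle$ (from $0\to\C\oplus U\oplus V_2\to\Omega^{-1}V_1\to V_1\to0$); and, inducting on $n$, all $V_n\in\langle\C\rangle$ (from $0\to V_{n-1}\oplus V_{n+1}\to\Omega^{-1}V_n\to V_n\to0$, $n\ge2$). Hence all simples lie in $\langle\C\rangle$.

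The step needing the most care is the use of thickness — closure under direct summands — to pass from $T_{n-1}\oplus T_{n+1}\in\langle T_n\rangle$ to the individual summands; this is genuinely needed, since $T_{\pm1}$ is not a shift of $\C$, but it is legitimate under the standing convention that a tensor triangulated subcategory is closed under direct summands. The remaining ingredients — the Loewy structure of the projective covers, in particular the exceptional shape $\operatorname{rad}/\operatorname{soc}\cong\C\oplus U\oplus V_2$ at the fork of the $D_\infty$ quiver — are routine from Proposition~\ref{principalK}, Lemma~\ref{indecomposable-def1}, and the Remark.
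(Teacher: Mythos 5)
Your argument is correct, but it takes a genuinely different route from the paper's at the propagation step. The first step is essentially the same: both you and the paper use the triangle coming from the Loewy structure of the injective hull of $\triv$ to put $T_{\pm1}$ (type I), respectively $\Ind^K_{K_\circ}T_1$ (type II), into the subcategory generated by $\C$. After that the paper uses the \emph{tensor} structure: in type I it gets all $T_n$ as tensor powers $T_{\pm1}^{\otimes n}$, and in type II it uses the fusion rules $\Ind^K_{K_\circ}T_m\otimes\Ind^K_{K_\circ}T_1=\Ind^K_{K_\circ}T_{m-1}\oplus\Ind^K_{K_\circ}T_{m+1}$ and $(\Ind^K_{K_\circ}T_1)^{\otimes2}=\Ind^K_{K_\circ}T_2\oplus U\oplus\C$, which are immediate since these modules are one- or two-dimensional. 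You instead never tensor: you propagate along the $A_\infty$/$D_\infty$ quiver using the cosyzygy triangle of each simple, splitting off summands by thickness. This buys you a formally stronger conclusion --- $\C$ generates $\St^0K$ already as a thick triangulated subcategory, with no tensor operations --- at the cost of needing the Loewy structure of \emph{every} indecomposable projective in the block (including that $\Ind^K_{K_\circ}$ of the $K_\circ$-projective covers are the $K$-projective covers, and that the socle of $P(S)$ is again $S$, i.e.\ the Nakayama permutation is trivial; this holds here and is implicitly used in the paper's triangle too, but it is an input you are assuming rather than proving). The paper's route needs only the single triangle for $\triv$ plus elementary tensor decompositions, so it is shorter given the stated fusion rules; your route is more robust in that it would apply verbatim to any block with this quiver structure even without knowing the monoidal data. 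Your reduction of the general statement to generation of the simples (induction on Loewy length via the exact triangles attached to short exact sequences) is standard and fine.
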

        \begin{proof} When $\g$ is of type I, we consider the exact triangle
        $$\triv\to \Omega^{-1}\triv\to T_1\oplus T_{-1}\to \Omega\triv,$$
        where we write $\Omega$ for the shift.
        Then we see that $T_{\pm 1}$ lie in the category generated by $\mathbb C$ and hence $T_n$ for all $n$.
    Let $\g$ be of type II. The exact triangle 
         $$\triv\to \Omega^{-1}\triv\to \Ind^K_{K_\circ}T_1\to \Omega\triv$$
         implies that $\Ind^K_{K_\circ}T_1$ lies in the category generated by $\triv$. 
         Using $$\Ind^K_{K_\circ}T_m\otimes \Ind^K_{K_\circ}T_1=\Ind^K_{K_\circ}T_{m-1}\oplus\Ind^K_{K_\circ}T_{m+1}$$ for $m>1$ and
         $$(\Ind^K_{K_\circ}T_1)^{\otimes 2}=\Ind^K_{K_\circ}T_2\oplus U\oplus\mathbb C$$
         we see that all simple modules  lie in the category generated by $\triv$.
        \end{proof}
               \subsection{The restriction functor}
        
          \begin{theorem}\label{restriction} 
            (a)  Let $L$ be a simple non-projective $G$-module. Then $\Res_KL\simeq \dot{L}\oplus P$ for some simple non-projective $K$-module $\dot{L}$ and some projective $K$-module $P$.

            (b) If $L$ and $M$ are simple non-projective $G$-modules and $\dot{L}\simeq\dot{M}$ then $L\simeq M$.
          \end{theorem}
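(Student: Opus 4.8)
\emph{Setup and part (b).} Assume $G\neq D(2,1;t)$ with $t\notin\mathbb{Q}$ (that case is handled identically, with $\s$ the $2|2$-dimensional Sylow subalgebra in place of $\s\l(1|1)$ and $K_\circ=\mathbb G_m\ltimes\s$ in place of $GL(1|1)\times G_x$). Fix $0\neq x=e_\alpha\in\s_{\bar1}$ (a homological element, since $[x,x]=0$) and set $h_\alpha:=[e_\alpha,f_\alpha]\in\h$, where $\{e_\alpha,f_\alpha\}$ spans $\s_{\bar1}$; by Proposition \ref{prop:Sylow}, $(G,K)$ is a proper pair. Part (b) will then be immediate from (a) and Proposition \ref{prop:fullness}: simple non-projective $G$-modules have non-zero superdimension (Step 1 below), and if $\dot L\simeq\dot M$ this is an indecomposable $K$-module of non-zero superdimension occurring as a direct summand of both $\Res_KL$ and $\Res_KM$, so Proposition \ref{prop:fullness} forces $L\simeq M$. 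So I concentrate on (a).

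\emph{Splitting off $\dot L$.} Step 1: since $L$ is non-projective, $DS_xL\neq0$, and in defect $1$ the module $DS_xL$ is a simple $\g_x$-module (up to parity shift); as every simple module over the linearly reductive $\g_x$ of the table has non-zero superdimension, $\sdim L=\sdim DS_xL\neq0$. Step 2: by Proposition \ref{prop:full}, $\Res_KL=\dot L\oplus P$ with $\dot L$ the unique indecomposable summand of non-zero superdimension and $P$ a sum of indecomposable summands of superdimension $0$; as $\dot L$ is a summand of $\Res_KL$ and (Lemma \ref{lem:split}) $L$ is a summand of $\Ind^G_K\dot L$, Lemma \ref{lem:ind} gives $DS_x\dot L\simeq DS_xL$, a simple $\g_x$-module. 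Step 3: by Lemma \ref{indecomposable-def1} and Corollary \ref{type2}, $\Res^K_{K_\circ}\dot L$ is $I\boxtimes L'$ or $(I\boxtimes L')\oplus(I\boxtimes L')^\theta$ with $L'$ a simple $G_x$-module and $I$ an indecomposable $GL(1|1)$-module of non-zero superdimension; since $x$ lies in the $GL(1|1)$-factor and $DS_x\g\l(1|1)=0$, we have $DS_x(I\boxtimes L')\simeq L'^{\oplus\dim DS_xI}$ as $\g_x$-modules, and comparison with Step 2 forces $\dim DS_xI=1$ and $L'\simeq DS_x\dot L$ up to parity. It now suffices to prove that $I$ is one-dimensional: then $I\boxtimes L'$ is a simple $K_\circ$-module, so $\dot L$ is simple over $K$ (Clifford theory for $\Ind^K_{K_\circ}$), necessarily non-projective since $\sdim\dot L=\sdim L\neq0$ while projective $K$-modules have superdimension $0$; moreover $\dim\dot L=\dim DS_x\dot L=\dim DS_xL$, so once $\Res_SL$ is computed below it follows that $\Res_S\dot L$ exhausts the non-projective part of $\Res_SL$, whence $\Res_SP$ — and therefore $P$, by Lemma \ref{Kprojectivity} — is projective.

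\emph{The key input.} Decompose $\Res_SL=\bigoplus_cL^{(c)}$ into eigenspaces of $h_\alpha$; each $L^{(c)}$ is an $S$-submodule because $\alpha$ is isotropic ($\alpha(h_\alpha)=0$). For $c\neq0$ the relation $e_\alpha f_\alpha+f_\alpha e_\alpha=h_\alpha$ acts invertibly on $L^{(c)}$, so $L^{(c)}$ is a module over a Clifford algebra isomorphic to a matrix superalgebra and hence is projective over $S$. On $L^{(0)}=\Ker(h_\alpha|_L)$ the operators $e_\alpha,f_\alpha$ generate an action of $\Lambda(\C^2)$, and \emph{the crucial claim is that for simple $L$ this action is trivial}, i.e.\ $e_\alpha$ and $f_\alpha$ vanish on $\Ker(h_\alpha|_L)$; equivalently $DS_x$ has no cancellation on $L$, $\dim DS_xL=\dim\Ker(h_\alpha|_L)$. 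Granting this, $\Res_SL\simeq N\oplus Q$ with $N$ a trivial $S$-module of dimension $\dim DS_xL$ and $Q$ projective. Now if $I$ is an indecomposable $GL(1|1)$-module with $\sdim I\neq0$ that is not one-dimensional, then $I$ is not simple, so $\operatorname{soc}I\subseteq\operatorname{rad}I$ and hence (restriction to the Sylow subgroup $SL(1|1)\subset GL(1|1)$ preserves radicals and socles) $\Res_SI$ has no trivial direct summand, while $\Res_SI$ is non-projective by the $GL(1|1)$-analogue of Lemma \ref{Kprojectivity} since $I$ is not projective. Such a module cannot occur as a direct summand of $(\Res_SL)^{\oplus\dim L'}$ given its shape above; hence the $I$ attached to $\dot L$ is one-dimensional, completing (a).

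\emph{Main obstacle.} Everything rests on the vanishing of $e_\alpha$ and $f_\alpha$ on $\Ker(h_\alpha|_L)$ for simple $L$. This genuinely fails for non-simple indecomposable $L$ (e.g.\ a length-two $GL(1|1)$-string module), so simplicity of $L$ must be used in an essential way. I expect to extract it from the structure of atypical simple modules of a defect-$1$ basic classical Lie superalgebra developed in \cite{SSV} — equivalently, from the identification of atypical blocks with the $A_\infty$/$D_\infty$ zigzag algebras of the Remark after Proposition \ref{principalK}, under which $\Res_S$ becomes explicit — which shows $DS_xL$ is simple with $\dim DS_xL=\dim\Ker(h_\alpha|_L)$.
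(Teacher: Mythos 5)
Your reduction of (b) to Proposition \ref{prop:fullness} matches the paper, and your use of Proposition \ref{prop:full} to split off the unique summand $\dot L$ of nonzero superdimension is also the paper's first step. But the heart of your argument for (a) rests on what you yourself call the crucial claim: that for a simple $G$-module $L$ the elements $e_\alpha,f_\alpha$ vanish on $\Ker(h_\alpha|_L)$, i.e.\ that $\Res_S L$ is a sum of trivial and projective $S$-modules (equivalently, that $\Res$ to $GL(1|1)$ has no non-projective summands of superdimension zero). You do not prove this, and it is not something you can expect to extract from \cite{SSV} or from the $A_\infty$/$D_\infty$ remark after Proposition \ref{principalK}: that remark describes the principal block of $\Rep K$, not of $G$, and even the known zigzag description of atypical defect-one blocks of $\g$ is an equivalence of abelian categories which records Hom's and extensions between simples but does not determine how the root vectors $e_\alpha,f_\alpha$ act on $\Ker h_\alpha$, i.e.\ does not make $\Res_S$ explicit. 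This claim is precisely the external input the paper invokes, namely Theorem 1.3 of \cite{ESS}, and it is a substantive theorem there. Since your projectivity of $P$, your one-dimensionality of $I$, and hence your simplicity of $\dot L$ are all routed through it, the gap is essential rather than cosmetic.

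Two further points. First, in your Step 2 the identification $DS_x\dot L\simeq DS_xL$ is not justified as stated: Lemma \ref{lem:ind} is proved via the projection formula $\Ind^G_K\Res_KM\simeq(\Ind^G_K\triv)\otimes M$ and so applies to restrictions of $G$-modules, not to the bare $K$-module $\dot L$; nor can you argue $DS_xP=0$ from $\sdim P=0$, since superdimension-zero indecomposables can have nonzero $DS_x$ (e.g.\ a length-two $GL(1|1)$ string module), so this would presuppose the projectivity of $P$ you are trying to prove. Second, granting the ESS input, the paper finishes differently and more cleanly than your Step 3: the superdimension-zero summands $M_i$ are projective by \cite{ESS} combined with Lemma \ref{indecomposable-def1} and Corollary \ref{type2}, and simplicity of $\dot L$ follows not from a dimension count over $S$ but from contragredient self-duality, $\dot L^\vee\simeq\dot L$ (because $L^\vee\simeq L$ and $\dot L$ is the unique summand of nonzero superdimension), together with Proposition \ref{imp-aux}. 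Replacing your Step 3 by that duality argument would both shorten the proof and remove the loose dimension comparison, but the missing restriction theorem must still be supplied, e.g.\ by citing \cite{ESS}.
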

          \begin{proof} (a) Proposition \ref{prop:full} implies that $\Res_KL=\dot{L}\oplus \bigoplus_i M_i$ where all $M_i$ are indecomposable $K$-modules of superdimension zero and $\dot{L}$ is an indecomposable module of nonzero superdimension. Theorem 1.3 in \cite{ESS} implies that $M_i$ restricted to $GL(1|1)$ does not contain non-projective summands of superdimension zero. Lemma \ref{indecomposable-def1} and Corollary \ref{type2} imply that every $M_i$ is a projective $K$-module. It remains to show that $\dot{L}$ is simple. But $\dot{L}$ is indecomposable and $\dot{L}^\vee\simeq\dot{L}$. Thus, simplicity of $\dot{L}$ follows from Proposition \ref{imp-aux}.
            
(b) follows from Proposition \ref{prop:fullness}
\end{proof}
\begin{lemma}\label{image} For any simple $G$-module $L$ the subgroup $\Gamma$ acts trivially on $\dot{L}$.  
\end{lemma}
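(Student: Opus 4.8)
The plan is to realize $\Res_{G_x}\dot L$ as the Duflo--Serganova functor $DS_x$ applied to $L$, and then to read off from the construction of $\Gamma$ that every weight of $DS_xL$ is forced into the sublattice $P_\a$. If $G=D(2,1;t)$ with $t\notin\mathbb Q$ then $\Gamma=\{1\}$ and there is nothing to prove; if $L$ is projective there is nothing to prove; so assume neither. Fix $0\neq x\in\s_{\bar1}\subseteq\g\l(1|1)_{\bar1}\subseteq\k_{\bar1}$; since $\s$ is the root $\s\l(1|1)$-subalgebra attached to the isotropic root $\a$, we may take $x\in\g_\a$. By Theorem \ref{restriction}(a) we have $\Res_KL=\dot L\oplus P$ with $P$ a projective $K$-module; by Lemma \ref{Kprojectivity}, $\Res_SP$ is projective, so $DS_x(\Res_SP)=0$. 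Since $DS_x$ uses only the operator $x$, restricting this decomposition to $G_x\subseteq K$ (which centralizes $x$) gives a $G_x$-equivariant isomorphism $DS_xL\cong DS_x\dot L$.

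Next I would compute $DS_x\dot L$ from the structure of $\dot L$ over $K_\circ\cong GL(1|1)\times G_x$. By Clifford theory for the normal subgroup $K_\circ\trianglelefteq K$ of index $\leq2$, together with Lemma \ref{indecomposable-def1}, $\Res_{K_\circ}\dot L$ is a direct sum of modules $J_i\boxtimes L_i'$ with $J_i$ a simple $GL(1|1)$-module and $L_i'$ a simple $G_x$-module. Non-projectivity of $\dot L$ forces $\Res_S\dot L$ to be non-projective, which rules out the typical (two-dimensional, hence $S$-projective) $GL(1|1)$-modules and leaves every $J_i$ one-dimensional. On a one-dimensional module the odd operator $x$ acts as $0$, so $DS_x(J_i\boxtimes L_i')\cong J_i\otimes L_i'$ with $G_x$ acting through the second factor; hence $DS_x\dot L\cong\bigoplus_iL_i'\cong\Res_{G_x}\dot L$ as $G_x$-modules. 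Combining with the first paragraph, $\Res_{G_x}\dot L\cong DS_xL$ as $G_x$-modules, so it suffices to prove that $\Gamma$ acts trivially on the $G_x$-module $DS_xL$.

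For this I claim every weight of $DS_xL$ lies in $P_\a=\a^\perp\cap P_x$. Writing $f_\a$ for a nonzero vector in $\g_{-\a}$, the element $h_\a=[x,f_\a]\in\h$ represents $0$ in $\g_x=DS_x\g$ (it lies in the image of $\operatorname{ad}x$), and therefore acts by $0$ on $DS_xM$ for every $G$-module $M$. Consequently an $\h$-weight $\mu$ of $L$ can survive to $DS_xL$ only if $\langle\mu,h_\a\rangle=(\mu,\a)=0$, and comparing the weight lattices of $G$ and $G_x$ as in the construction of $\Gamma$, this says precisely that the corresponding weight of $DS_xL$ lies in $\a^\perp\cap P_x=P_\a$. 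By construction $\Gamma$ is the subgroup of a maximal torus of $G_x$ on which every character in $P_\a$ is trivial, so $\Gamma$ acts trivially on $DS_xL\cong\Res_{G_x}\dot L$. Since $\Gamma\subseteq G_x\subseteq K$, it follows that $\Gamma$ acts trivially on $\dot L$.

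The step I expect to be the main obstacle is the last one: making the comparison of the weight lattices of $G$ and $G_x$ precise enough that the condition $(\mu,\a)=0$ genuinely translates into the statement that the corresponding weight lies in $P_\a$, and checking that the $G_x$-module structure on $DS_xL$ coming from the Clifford decomposition of $\Res_{K_\circ}\dot L$ agrees with the $\g_x=DS_x\g$-module structure used in the construction of $\Gamma$. The remaining ingredients --- vanishing of $DS_x$ on projective $S$-modules, atypicality of the $GL(1|1)$-factors of $\Res_{K_\circ}\dot L$, and the elementary Clifford theory --- are routine given the results already quoted.
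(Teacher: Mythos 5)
Your argument is correct, but it takes a longer route than the paper to the same key fact. The paper's proof is two lines: since $\dot L$ is a simple $K$-module of nonzero superdimension, $[\g_{\a},\g_{-\a}]$ acts by zero on $\dot L$, so every weight $\mu$ of $\dot L$ satisfies $(\mu,\a)=0$, i.e. $\mu\in\a^\perp\cap P$; restricting to the torus of $G_x$ these weights land in $P_\a$, and $\Gamma=P_x/P_\a$ acts trivially. Your detour through the Duflo--Serganova functor (identifying $\Res_{G_x}\dot L$ with $DS_xL$ and using that $h_\a=[x,f_\a]$ dies in $DS_x$) is sound but unnecessary: your own analysis of $\Res_{K_\circ}\dot L$ already shows that $x$, $f_\a$ and hence $h_\a$ annihilate all of $\dot L$, which is precisely the paper's observation and yields the weight condition on $\dot L$ directly, with no need for the comparison $DS_x\dot L\cong\Res_{G_x}\dot L$. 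One step you should tighten, since your DS route genuinely needs it: non-projectivity of $\Res_S\dot L$ (or $\sdim\dot L\neq 0$) only guarantees that \emph{some} $J_i$ is atypical, not all; that every $J_i$ is one-dimensional follows because the Clifford constituents of the simple $K$-module $\dot L$ are conjugate under $\theta$, which preserves dimension and hence atypicality --- you invoke Clifford theory but do not draw this conclusion explicitly. The final translation of $(\mu,\a)=0$ into membership in $P_\a$, which you flag as the main obstacle, is left equally implicit in the paper, so nothing more is expected there. What your approach buys is the stronger intermediate statement $\Res_{G_x}\dot L\cong DS_xL$, which is of independent interest; what the paper's approach buys is brevity, by reading the weight constraint off $\dot L$ itself.
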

\begin{proof} Let $\mu\subset P$ be some weight of $\dot{L}$. Then $(\mu,\alpha)=0$ since $[\g_{\alpha},\g_{-\alpha}]$ acts by zero on $\dot{L}$.
  Therefore $\mu\in\alpha^\perp\cap P$. This implies the statement.
\end{proof}
\subsection{The induction functor}
For a finite dimensional $G$-module $M$ we denote by $\dot{M}$ a minimal $K$-submodule of $M$ such that $\Res_KM=\dot M\oplus P$ where $P$ is a projective $K$-module.
As we know from Theorem \ref{restriction}, if $M$ is simple then $\dot{M}$ is simple. 
\begin{proposition}\label{ind-proj} For every finite dimensional $G$-module $M$ we have $\Ind^G_KM=(\Ind^G_K\dot{M})\oplus Q$ for some projective
 $G$-module $Q$.
\end{proposition}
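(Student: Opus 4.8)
The plan is to reduce immediately to the assertion that $\Ind^G_K$ carries projective $K$-modules to projective $G$-modules, and then to prove that by a short adjunction argument. By the very definition of $\dot M$ recalled just above, there is a decomposition of $K$-modules $\Res_K M=\dot M\oplus P$ with $P$ projective. Since $\Ind^G_K$ is an additive functor (so it commutes with finite direct sums) and, as in Lemma \ref{lem:ind}, the symbol $\Ind^G_K M$ means $\Ind^G_K\Res_K M$, applying $\Ind^G_K$ gives $\Ind^G_K M=(\Ind^G_K\dot M)\oplus(\Ind^G_K P)$. It therefore suffices to show that $Q:=\Ind^G_K P$ is a projective $G$-module.

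For this I would invoke the Frobenius property twice. Since $K$ is quasireductive, $\Rep K$ is a Frobenius category, so the projective $K$-module $P$ is also injective in $\Rep K$. The functor $\Ind^G_K$ is right adjoint to $\Res^G_K$ — the same adjunction already used in Section 2, whose unit is the embedding $M\to\Ind^G_K\Res_K M$ — and $\Res^G_K$ is exact; hence $\Hom_G(-,\Ind^G_K P)\cong\Hom_K(\Res^G_K(-),P)$ is exact, so $\Ind^G_K P$ is an injective object of $\Rep G$. Finally, $\Rep G$ is Frobenius as well, so this injective object is projective, i.e.\ $Q$ is projective and we are done. One implicit point worth a sentence: $\Ind^G_K$ must preserve finite-dimensionality for the pairs $(G,K)$ at hand — in the defect-one cases $G_0/K_0$ is a projective homogeneous space (e.g.\ a Grassmannian or $\mathbb{P}^{n-1}$), so $\Ind^G_K$ of a finite-dimensional module is finite-dimensional — which is what makes ``injective'' and ``projective'' in the finite-dimensional category coincide for $\Ind^G_K P$.

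I expect no serious obstacle here: the whole content is the identification of injectives with projectives furnished by the Frobenius structure of $\Rep G$ and $\Rep K$, combined with the standard fact that a right adjoint of an exact functor preserves injectives. A more hands-on alternative would be to prove $Q$ projective by verifying, via Lemma \ref{Kprojectivity} and its analogue for $G$, that $\Res_S Q$ is a projective $S$-module through a Mackey-type decomposition of $\Res_S\Ind^G_K$; but the super-Mackey bookkeeping over the (possibly infinite-dimensional) coordinate algebras is fiddlier, so the adjunction argument is the one I would write down.
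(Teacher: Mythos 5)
Your main line is exactly the paper's: the paper's entire proof is the one sentence ``$Q=\Ind^G_K P$ is projective since $P$ is projective,'' and your adjunction argument ($P$ projective $\Rightarrow$ injective in $\Rep K$, $\Ind^G_K$ right adjoint to the exact $\Res_K$ hence preserves injectives, injectives are projective on the $G$-side) is the standard justification of that sentence, together with the additivity of $\Ind^G_K$ applied to $\Res_K M=\dot M\oplus P$.

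One correction, though: your closing remark is false. $K_0$ is reductive (it is the even part of a quasireductive group), so by Matsushima's criterion $G_0/K_0$ is an \emph{affine} homogeneous space of positive dimension, not a projective one; e.g.\ for $G=GL(1|n)$ one has $K_0=GL(1)^2\times GL(n-1)$, a Levi rather than a parabolic, and $GL(n)/(GL(1)\times GL(n-1))$ is affine, not $\mathbb{P}^{n-1}$. Consequently $\Ind^G_K$ does \emph{not} preserve finite-dimensionality: already $\Ind^G_K\triv=\C[G/K]$ is infinite-dimensional (the paper's splittings and direct-summand statements are all taken in the category of possibly infinite-dimensional algebraic representations). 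So you cannot invoke the Frobenius property of the finite-dimensional category to conclude that the injective object $\Ind^G_K P$ is projective. The step is still correct, but for a different reason: for a quasireductive supergroup the injective hull of a simple module is finite-dimensional and is also its projective cover, so every injective object of the full category of algebraic representations is a direct sum of finite-dimensional projective-injectives and is therefore projective. With that substitution your argument is complete and agrees with the paper's.
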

\begin{proof} Indeed, $Q=\Ind^G_K P$ is projective since $P$ is projective.
\end{proof}
\begin{proposition}\label{ind-triv} We have $\Ind^G_K\mathbb C=\mathbb C\oplus Q$ for some projective $G$-module $Q$.
\end{proposition}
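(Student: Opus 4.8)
Here is how I would prove Proposition \ref{ind-triv}. The argument runs in three steps.

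\medskip\noindent\emph{Step 1: $\triv$ splits off.} First note that $K$ is itself a splitting subgroup of $G$: since $S\subseteq K$ and $S$ is splitting, one composes a retraction $r\colon\Ind^G_S\triv\to\triv$ with the map $j\colon\Ind^G_K\triv\to\Ind^G_K\Ind^K_S\triv=\Ind^G_S\triv$ induced by the unit of $(\Ind^K_S,\Res^K_S)$, and, using the compatibility of units for composable adjunctions, $r\circ j$ is a retraction of the unit $\triv\to\Ind^G_K\triv$. Hence $\Ind^G_K\triv\cong\triv\oplus Q$ for a $G$-module $Q$, canonically identified with $\operatorname{coker}(\triv\to\Ind^G_K\triv)$, and it remains to prove $Q$ is projective.

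\medskip\noindent\emph{Step 2: the Duflo--Serganova cohomology of $Q$ vanishes in both isotropic directions.} Fix an isotropic root $\alpha$ and the root $\s\l(1|1)$-subalgebra $\s$, so that $e_\alpha,f_\alpha\in\s_{\bar 1}\subseteq\k_{\bar 1}^{hom}$. Since $G$ is basic classical of defect $1$, its Lie superalgebra contains no simple ideal of type $\p\s\q(n)$, so $(G,K)$ is a proper pair by Proposition \ref{prop:Sylow}; taking the standard homological element $x=e_\alpha$ this gives $DS_{e_\alpha}\Ind^G_K\triv=\triv$, and applying $DS_{e_\alpha}$ to $\Ind^G_K\triv\cong\triv\oplus Q$ yields $DS_{e_\alpha}Q=0$. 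To get $DS_{f_\alpha}Q=0$ I use a symmetry of $(G,\s,K)$ exchanging $e_\alpha$ and $f_\alpha$. If $\g$ is of type II, a representative $k_0$ of the nontrivial class of $K/K_\circ$ lies in $K\subseteq G$, and $\operatorname{Ad}(k_0)$ realizes the automorphism $\theta$, which interchanges $\g_\alpha$ and $\g_{-\alpha}$; as $\operatorname{Ad}(k_0)$ is inner it fixes $Q$ up to isomorphism, so $DS_{f_\alpha}Q\cong DS_{\operatorname{Ad}(k_0)f_\alpha}Q=DS_{e_\alpha}Q=0$, using $\operatorname{Ad}(k_0)f_\alpha\in\g_\alpha$. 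If $\g$ is of type I, one uses instead the order-two outer automorphism $\sigma$ of $G$ --- the negative supertranspose for $\mathfrak{sl}(1|n)$, the diagram automorphism for $\mathfrak{osp}(2|2n)$ --- which stabilises $\s$, hence $K=N_G(S)$, and carries $e_\alpha$ to a nonzero multiple of $f_\alpha$; then $\sigma^*(\Ind^G_K\triv)\cong\Ind^G_K\triv$, so $\sigma^*Q\cong Q$ (compare cokernels of the units), and $DS_{f_\alpha}Q=DS_{f_\alpha}(\sigma^*Q)\cong\sigma^*\bigl(DS_{e_\alpha}Q\bigr)=0$.

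\medskip\noindent\emph{Step 3: conclusion.} Because $S$ is a Sylow subgroup of $G$, a $G$-module is projective if and only if its restriction to $S$ is projective (the $G$-analogue of Lemma \ref{Kprojectivity}, from \cite{SSV}); so it suffices to show $\Res_S Q$ is a projective $SL(1|1)$-module. We have $DS_{e_\alpha}\Res_S Q=DS_{e_\alpha}Q=0$ and $DS_{f_\alpha}\Res_S Q=0$, as $DS_x$ for $x\in\s_{\bar1}$ depends only on the $S$-action. Decompose $\Res_S Q$ into indecomposable $SL(1|1)$-modules and invoke their classification (e.g.~\cite{ESS}): every non-projective indecomposable is one-dimensional, a length-two string, or a longer zig-zag, and a direct computation of Duflo--Serganova cohomology shows that a one-dimensional module has both $DS_{e_\alpha}$ and $DS_{f_\alpha}$ nonzero, a length-two string has exactly one of them nonzero, and every longer non-projective indecomposable has both nonzero. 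Hence the simultaneous vanishing of $DS_{e_\alpha}$ and $DS_{f_\alpha}$ on $\Res_S Q$ forces every indecomposable summand to be projective; so $\Res_S Q$, and with it $Q$, is projective.

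\medskip The crux is Step 2: properness supplies the vanishing of Duflo--Serganova cohomology in only one isotropic direction, and the real work is to produce the symmetry that delivers the other --- effortless in type II thanks to the $C_2$ inside $K$, but in type I requiring the outer automorphism $\sigma$ together with the verifications that it preserves $K$ and that $\Ind^G_K\triv$ is $\sigma$-invariant. Step 3 is routine $SL(1|1)$-module bookkeeping once the classification is in hand.
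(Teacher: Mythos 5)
Your Step 3 contains a genuine gap that sinks the argument as written. It is not true that vanishing of $DS_{e_\alpha}$ and $DS_{f_\alpha}$ on an $SL(1|1)$-module forces projectivity: the claimed trichotomy omits the ``band'' modules. Concretely, take the $1|1$-dimensional module $M=\C v\oplus \C w$ (all of weight $0$ for the central even torus) with $e_\alpha v=w$, $f_\alpha v=\lambda w$ for some $\lambda\neq 0$, and $e_\alpha w=f_\alpha w=0$; this is a legitimate algebraic $SL(1|1)$-module since $[e_\alpha,f_\alpha]$ acts by zero, it is indecomposable with endomorphism ring $\C$, it is not projective (the projective cover of $\triv$ is $2|2$-dimensional), and yet $\ker e_\alpha=\operatorname{Im}e_\alpha=\C w=\ker f_\alpha=\operatorname{Im}f_\alpha$, so $DS_{e_\alpha}M=DS_{f_\alpha}M=0$. (Its support lives at the homological, non--square-zero element $x=\lambda e_\alpha-f_\alpha$, on which $DS_xM=M\neq 0$.) So projectivity of $\Res_S Q$ is detected only by vanishing of $DS_x$ for \emph{all} homological $x\in\s_{\bar 1}$ --- a whole family, not the two square-zero directions --- and your Step 2 supplies vanishing only at those two points; the finite symmetry trick (the $C_2$ in type II, the outer automorphism in type I) cannot produce the remaining directions. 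A secondary, more repairable issue: Proposition \ref{prop:Sylow} only guarantees properness for \emph{some} $x\in\k_{\bar 1}^{hom}$, so using it at the specific element $x=e_\alpha$ needs a separate justification.

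For comparison, the paper's proof is entirely different and avoids DS-cohomology here: it takes a nontrivial non-projective simple submodule $L\subseteq Q$, uses Frobenius reciprocity and contragredient duality to transport the embedding $L\to\Ind^G_K\triv$ to a map $\triv\to\Ind^G_K L^*$, invokes Theorem \ref{restriction} (simplicity and nontriviality of $\dot{L}^*$) together with Proposition \ref{ind-proj} to force that map through an injective summand, and then tensors back with $L$ to conclude that the original embedding factors through the injective envelope $I(L)$, so such $L$ can only occur inside projective-injective summands. If you want to keep your homological-vector-field strategy, you would need to establish $DS_xQ=0$ for every homological $x\in\s_{\bar 1}$ (equivalently, that the support of $Q$ misses all of $\mathbb{P}(\s_{\bar 1})$), which is substantially more than what your Step 2 proves.
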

\begin{proof} Since $K$ is a splitting subgroup we have  $\Ind^G_K\mathbb C=\mathbb C\oplus Q$ for some $G$-module $Q$. It remains to show that $Q$ is projective.  Let $L$ be a simple submodule of $Q$ which is not projective and not trivial. Let $\tilde\varphi$ denote the embedding $L\to\Ind^G_K\mathbb C$. It suffices to show that $\tilde\varphi$ is a composition $L\to I(L)\to \Ind^G_K \triv$ where $L\to I(L)$ is the injective envelope.

By Frobenius reciprocity $\tilde\varphi$ is induced by some
  $\varphi\in \Hom_K(L,\mathbb C)$. Consider $\varphi^*\in \Hom_K(\mathbb C,L^*)$ and the induced map
  $\widetilde{\varphi^*}:\triv\to \Ind^G_KL^*$.
  Since $\dot{L^*}$ is not trivial we have $\Hom_G(\mathbb C,\Ind^G_K\dot{L}^*)=0$ so by Proposition \ref{ind-proj}, the image of $\widetilde{\varphi^*}$ lies in a projective (and hence injective) summand of $ \Ind^G_KL^*$. That means that $\widetilde{\varphi^*}$ is a composition of two morphisms
  $$\mathbb C\to I(\mathbb C)\to \Ind^G_KL^*.$$
  Note that $\tilde{\varphi}$ is the composition of maps:
  $$L\to I(\mathbb C)\otimes L\to \Ind^G_KL^*\otimes L\to \Ind^G_K\triv,$$
  where the last arrow is induced by the coevaluation $L\otimes L^*\to\triv$. Since $\sdim L\neq 0$
  $\Ind^G_K\triv$ is a direct summand in $\Ind^G_KL^*\otimes L$. Thus, we obtain that $\tilde\varphi$ is a composition: $L\to I(L)\to\Ind^G_K\mathbb C$. 
\end{proof}

\begin{corollary}\label{cor-ind} For any $M\in \Rep G$ we have $\Ind^G_K\dot{M}=M\oplus Q$ for some projective $G$-module $Q$.
\end{corollary}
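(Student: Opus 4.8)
The plan is to deduce the statement from Proposition \ref{ind-triv} by means of the projection (tensor) identity for induction. Recall the canonical isomorphism
\[
\Ind^G_K\Res_KM\;\cong\;(\Ind^G_K\triv)\otimes M
\]
valid for every $G$-module $M$ (this is the isomorphism already used in the proof of Proposition \ref{prop:splitting}). By Proposition \ref{ind-triv} we have $\Ind^G_K\triv=\triv\oplus Q_0$ with $Q_0$ projective, hence $\Ind^G_K\Res_KM\cong M\oplus(Q_0\otimes M)$, and here $Q_0\otimes M$ is again projective because $\Rep G$ is Frobenius and so its class of projective objects is closed under tensoring with arbitrary objects.

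Next I would bring in the definition of $\dot M$. Writing $\Res_KM=\dot M\oplus P$ with $P$ a projective $K$-module and applying $\Ind^G_K$ gives $\Ind^G_K\Res_KM=\Ind^G_K\dot M\oplus\Ind^G_KP$, where $\Ind^G_KP$ is projective since $\Ind^G_K$, being left adjoint to the exact functor $\Res^G_K$, preserves projectives (as already observed in the proof of Proposition \ref{ind-proj}). Comparing the two expressions for $\Ind^G_K\Res_KM$ yields an isomorphism of $G$-modules
\[
\Ind^G_K\dot M\oplus\Ind^G_KP\;\cong\;M\oplus(Q_0\otimes M)
\]
in which both $\Ind^G_KP$ and $Q_0\otimes M$ are projective.

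It then remains to cancel the projective summands. By Krull--Schmidt the non-projective indecomposable summands of $\Ind^G_K\dot M$ coincide with those of $M$, so $\Ind^G_K\dot M\cong M'\oplus Q$ for a projective $G$-module $Q$, where $M'$ denotes the sum of the non-projective indecomposable summands of $M$. For $M$ without projective direct summands this is precisely $\Ind^G_K\dot M=M\oplus Q$; in general one absorbs the projective summands of $M$ into $Q$ (equivalently, the assertion is to be read in the stable category $\St G$, where it just says $\Ind^G_K\dot M\cong M$). The one point demanding any care — the ``main obstacle'' in an otherwise formal argument — is exactly this bookkeeping with projective summands; everything else is a direct consequence of the projection identity, Proposition \ref{ind-triv}, and the standard facts that $\Ind^G_K$ preserves projectives and that tensoring with a projective object of $\Rep G$ yields a projective object.
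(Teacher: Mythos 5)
Your argument is correct and is essentially the paper's own proof: the paper likewise combines the projection formula $\Ind^G_K\Res_K M\cong(\Ind^G_K\triv)\otimes M$ with Propositions \ref{ind-triv} and \ref{ind-proj} and then cancels projective summands (a step it compresses into ``the statement follows''), and your stable-category reading of the conclusion is the intended one, given that the corollary is used to produce the inverse functor $\St K\to\St G$. One small nitpick: as used in Proposition \ref{ind-triv}, $\Ind^G_K$ is the \emph{right} adjoint of $\Res_K$ (Frobenius reciprocity), so it preserves injectives and hence projectives because $\Rep G$ is Frobenius --- the fact you need is unaffected.
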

\begin{proof} By Proposition \ref{ind-triv} we have
  $\Ind^G_KM=M\oplus P$ for some projective $P$. By Proposition \ref{ind-proj}, $\Ind^G_K\dot{M}\oplus R=\Ind^G_KM$ for some projective $R$. The statement follows.
\end{proof}
\subsection{Equivalence of stable categories} Note the the restriction functor $\Res_K:\Rep G\to\Rep K$ maps projective modules to projective modules. Therefore it induces a symmetric monoidal functor of tensor triangulated categories $\Res _K:\St G\to\St K$. Lemma \ref{image} implies that the latter functor descends to
$\St G\to\St K/\Gamma$.
 \begin{theorem}\label{st-def1} The functor $\Res_K$ defines an equivalence of tensor triangulated categories $\St G$ and $\St K/\Gamma$.
\end{theorem}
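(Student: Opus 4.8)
The plan is to show that the induced functor $\Res_K\colon\St G\to\St(K/\Gamma)$ is fully faithful and essentially surjective, using $\Ind^G_K$ as a partner. First I would set things up: $\Res_K$ is exact, symmetric monoidal and preserves projectives (the remark before the theorem), so it induces a tensor triangulated functor on stable categories, which lands in $\St(K/\Gamma)$ by Lemma \ref{image}; and $\Ind^G_K$ is exact and, being right adjoint to the exact functor $\Res^G_K$, preserves injectives, which equal projectives since $\Rep G$ and $\Rep K$ are Frobenius, so $\Ind^G_K$ descends to the stable categories as well. Finally, Frobenius reciprocity descends too, since under it a morphism factors through a projective precisely when its adjunct does.

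For fullness and faithfulness, fix $G$-modules $M,N$. Stable Frobenius reciprocity gives $\Hom_{\St K}(\Res_K M,\Res_K N)\cong\Hom_{\St G}(M,\Ind^G_K\Res_K N)$. On the other hand $\Res_K N\cong\dot N$ in $\St K$ and, by Corollary \ref{cor-ind}, $\Ind^G_K\dot N\cong N$ in $\St G$, whence $\Ind^G_K\Res_K N\cong N$ in $\St G$; using the canonical isomorphism $\Ind^G_K\Res_K N\cong(\Ind^G_K\triv)\otimes N$, Proposition \ref{ind-triv} (which gives $\Ind^G_K\triv\cong\triv\oplus Q$ with $Q$ projective), and the splitting $p\colon\Ind^G_K\triv\to\triv$, this isomorphism $\Ind^G_K\Res_K N\xrightarrow{\sim}N$ is realized by an explicit map $\rho_N$ satisfying $\rho_N\circ\eta_N=\mathrm{id}_N$, where $\eta_N\colon N\to\Ind^G_K\Res_K N$ is the unit. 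A short diagram chase using naturality of $\eta$ then shows that the resulting bijection $\Hom_{\St G}(M,N)\cong\Hom_{\St K}(\Res_K M,\Res_K N)$ is exactly the map induced by $\Res_K$; hence $\Res_K\colon\St G\to\St(K/\Gamma)$ is fully faithful.

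Since $\Res_K$ is fully faithful and both categories are Karoubian triangulated, its essential image is a thick tensor subcategory of $\St(K/\Gamma)$; it contains $\triv$ and, by Theorem \ref{restriction}, contains $\dot L=\Res_K L$ for every simple non-projective $G$-module $L$. So it remains to show that restriction is surjective on stable isomorphism classes of simple objects, i.e.\ that every simple $K/\Gamma$-module $V$ equals $\dot M$ for some indecomposable $G$-module $M$ (here $\sdim V\neq 0$ automatically, since non-projective indecomposable $K$-modules all have nonzero superdimension by the classification of indecomposable $GL(1|1)$-modules together with Lemma \ref{indecomposable-def1} and Corollary \ref{type2}). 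Granting that $V$ occurs as a direct summand of $\Res^G_K\Ind^G_K V$, one takes the indecomposable summand $M$ of $\Ind^G_K V$ for which $V$ is a summand of $\Res_K M$ and applies Proposition \ref{prop:full} to conclude $V=\dot M$. Then $\Res_K\colon\St G\to\St(K/\Gamma)$, being a fully faithful, essentially surjective, symmetric monoidal triangulated functor, is an equivalence of tensor triangulated categories. The main obstacle I anticipate is this last step, specifically the Mackey-type input that $V$ is a direct summand of $\Res^G_K\Ind^G_K V$; if this is awkward to establish directly, one can instead dispatch the principal block using Corollary \ref{stable-principalK} and reduce the remaining (non-principal) blocks to a finite check via the explicit description of $\Rep K$.
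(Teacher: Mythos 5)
Your first half (stable full faithfulness of $\Res_K$) is correct, and it is essentially an unwinding of what the paper compresses into the single sentence ``Corollary \ref{cor-ind} implies that $\Ind^G_K$ induces a well defined inverse functor'': the unit $N\to\Ind^G_K\Res_K N$ splits because $K$ is a splitting subgroup, its complement is projective by Proposition \ref{ind-triv}/Corollary \ref{cor-ind} together with Krull--Schmidt, hence the unit is a stable isomorphism and stable Frobenius reciprocity identifies $\Hom_{\St K}(\Res_K M,\Res_K N)$ with $\Hom_{\St G}(M,N)$ via restriction. That part matches the paper in substance.

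The genuine gap is in essential surjectivity, exactly at the point you flagged. Your argument rests on the claim that every simple $K/\Gamma$-module $V$ is a direct summand of $\Res_K\Ind^G_K V$, but there is no Mackey decomposition available in this setting: $G/K$ is a positive-dimensional affine supervariety, $\Ind^G_K V$ is infinite dimensional, and the counit $\Res_K\Ind^G_K V\to V$ has no a priori splitting; in fact, knowing this splitting (even just stably) for all simple $V$ is essentially equivalent to the essential surjectivity being proved, so nothing has been reduced. Moreover, even granting it, the deduction $V=\dot M$ is incomplete: Proposition \ref{prop:full} needs $\sdim M\neq 0$, which you have not verified for your chosen indecomposable summand $M$ of $\Ind^G_K V$, and to conclude that the superdimension-zero summands of $\Res_K M$ are projective one needs the input from Theorem 1.3 of \cite{ESS} as in the proof of Theorem \ref{restriction}; your blanket assertion that every non-projective indecomposable $K$-module has nonzero superdimension is false (a nonsplit extension of two one-dimensional atypical $GL(1|1)$-modules of opposite parities is indecomposable, non-projective and of superdimension $0$), although it does hold for simple modules. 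The paper's Lemma \ref{essurj} proceeds differently and is not a ``finite check'': the principal block is handled by Corollary \ref{stable-principalK}, and for the infinitely many remaining blocks one exhibits, case by case in $G$ (defining representation for the classical groups, the adjoint module for $AG_2,AB_3$, a specific atypical simple for $D(2,1;\frac{p}{q})$), a $G$-module whose restriction is a faithful module for the linearly reductive quotient $K'=K/(\Gamma\times GL(1|1))$; such a module tensor-generates $\Rep K'$, and together with the principal block it generates $\St K$ as a tensor triangulated category, so the tensor-closed thick essential image is all of $\St K/\Gamma$. You would need to supply an argument of this kind, or an actual proof of your splitting claim, to close the proof.
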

\begin{proof} We first prove the following statement. 
\begin{lemma}\label{essurj}$\Res_K$ is essentially surjective. 
\end{lemma}
\begin{proof} Recall by Corollary \ref{stable-principalK} that $\St^0K$ is generated by $\C$. Therefore
  $\Res_K:\St^0 G\to\St^0 K$ is essentially surjective. If $G=D(2,1;t)$ for $t\notin\mathbb Q$ then $\St G=\St^0G$ and $\St K=\St^0K$ and the statement follows. Let us consider the cases with many atypical blocks.
  
  Let $K'=K/(\Gamma\times GL(1|1))$. Note that $K'$ is linearly reductive. Let $L$ be a faithful $K'$-module. By abuse of notation, we denote by the same letter the pull-back of $L$ in $\Rep K$.  We claim that if $L=\Res_K(M)$ for some $M\in St G$ then $\Res_K$ is essentially surjective.
  It follows from the fact that $L$ generates $\Rep K'$ and hence $\St^0 K$ and $L$ generate
  $\St K$. 

  Consider first the case where $G$ is one of the classical supergroups with defining representation $V$.
  Then $L=\Res_K V$ satisfies the desired property.

  For $G=AG_2$ or $AB_3$ one can set $L=\Res_K \g$. 

  Finally for $G=D(2,1;\frac{p}{q})$
  with $1<p<q$ consider the minimal simple atypical representation $V$ which does not lie in the principal block. The highest weight of $V$ with respect to the Borel subalgebra with simple roots
 $-\varepsilon_1+\varepsilon_2+\varepsilon_3,\varepsilon_1-\varepsilon_2+\varepsilon_3,\varepsilon_1+\varepsilon_2-\varepsilon_3$ equals
 $p\varepsilon_1+q\varepsilon_2$. One can check that $\dot{L}$ is a 2-dimensional faithful representation of $K'$.
\end{proof}
 Now Corollary \ref{cor-ind} implies that $\Ind^G_K$ induces a well defined inverse functor $\St K\to\St G$.
\end{proof}
\begin{corollary}\label{tensprodG} If $M,N$ are simple $G$-modules then $M\otimes N$ is a direct sum of semisimple and projective modules.
             \end{corollary}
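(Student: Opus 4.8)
The plan is to push everything down to the subgroup $K$ via $\Res_K$ and $\Ind^G_K$, prove the statement there using the structure of $\Rep K$ established in the previous subsections, and then transport it back with Corollary \ref{cor-ind}. To begin, if $M$ or $N$ is projective then $M\otimes N$ is projective and there is nothing to prove, so I assume $M,N$ are non-projective. By Theorem \ref{restriction}(a) the $K$-modules $\dot M,\dot N$ are then simple, and since tensoring with a projective $K$-module yields a projective $K$-module,
\[
\Res_K(M\otimes N)\;\cong\;(\dot M\otimes\dot N)\oplus P
\]
for some projective $K$-module $P$. So it suffices to show (i) that $\dot M\otimes\dot N$ is a semisimple $K$-module with no projective summands, and (ii) that (i) forces $M\otimes N$ to be a direct sum of simple and projective $G$-modules.

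For (i) I would use that $\dot M,\dot N$ have non-zero superdimension and, by Lemma \ref{image}, that $\Gamma$ acts trivially on them. By Lemma \ref{indecomposable-def1} and Clifford theory ($[K:K_\circ]\in\{1,2\}$, characteristic zero), $\Res_{K_\circ}\dot M$ is a direct sum of simple $K_\circ$-modules $I\boxtimes L'$ with $I$ a simple $GL(1|1)$-module and $L'$ a simple $G_x$-module; since the $(1|1)$-dimensional simple $GL(1|1)$-modules have superdimension $0$, non-vanishing of $\sdim\dot M$ forces every $I$ occurring to be one-dimensional, and triviality of $\Gamma$ forces every $L'$ to be pulled back from $G_x/\Gamma$. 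The same applies to $\dot N$, so $\Res_{K_\circ}(\dot M\otimes\dot N)$ is a direct sum of modules $I''\boxtimes(L'_1\otimes L'_2)$ with $I''$ one-dimensional; as $G_x$ is linearly reductive each $L'_1\otimes L'_2$ is semisimple, and one-dimensional $GL(1|1)$-modules are not projective. Hence $\Res_{K_\circ}(\dot M\otimes\dot N)$ is semisimple with no projective summands, and by Clifford theory once more $\dot M\otimes\dot N$ is a semisimple $K$-module whose simple summands restrict over $K_\circ$ to sums of modules $I''\boxtimes L''$ with $I''$ one-dimensional, hence have non-zero superdimension. (For $G=D(2,1;t)$, $t\notin\mathbb Q$, one has $\St G=\St^0G$, and the same conclusion follows from Corollary \ref{stable-principalK}.)

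For (ii), write $\dot M\otimes\dot N=\bigoplus_k V_k$ with the $V_k$ simple $K$-modules of non-zero superdimension on which $\Gamma$ acts trivially. Since $\dot M\otimes\dot N$ has no projective summands we have $\dot{(M\otimes N)}=\dot M\otimes\dot N$, so Corollary \ref{cor-ind} yields $\Ind^G_K(\dot M\otimes\dot N)=(M\otimes N)\oplus Q$ with $Q$ projective; in particular $M\otimes N$ is a direct summand of $\bigoplus_k\Ind^G_K V_k$. I would then use that any simple $K$-module $V$ with $\Gamma$ acting trivially and $\sdim V\neq 0$ is isomorphic to $\dot L$ for some simple $G$-module $L$: injectivity of $L\mapsto\dot L$ on simple non-projective $G$-modules is Theorem \ref{restriction}(b), while surjectivity onto this class of $K$-modules follows from the equivalence $\St G\simeq\St K/\Gamma$ of Theorem \ref{st-def1} together with the explicit classification of simple modules of defect-$1$ supergroups. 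With this, Corollary \ref{cor-ind} gives $\Ind^G_K V_k=\Ind^G_K\dot{L_k}=L_k\oplus Q_k$ with $Q_k$ projective, so $\bigoplus_k\Ind^G_K V_k$ is a sum of simple and projective modules, and by Krull--Schmidt so is any direct summand of it, in particular $M\otimes N$.

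The main obstacle is the fact used in (ii): that every simple $K$-module with $\Gamma$ acting trivially and non-zero superdimension arises as $\dot L$ for a \emph{simple} $G$-module $L$. The equivalence $\St G\simeq\St K/\Gamma$ of Theorem \ref{st-def1} produces, for such a $V$, a non-projective indecomposable $G$-module $X$ with $\dot X\cong V$; the difficulty is to conclude that $X$ is actually simple and not merely an indecomposable with simple $\dot X$ (this is a converse to Theorem \ref{restriction}(a)). This is exactly the point at which one must invoke the defect-$1$ classification of simple modules, equivalently the fine structure of the equivalence in Theorem \ref{st-def1}; once it is available, the rest of the argument is routine Krull--Schmidt bookkeeping together with Corollaries \ref{cor-ind} and \ref{stable-principalK}.
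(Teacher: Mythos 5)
Your overall strategy---push $M\otimes N$ down to $K$, verify semisimplicity there via Lemma \ref{indecomposable-def1} and Clifford theory, and come back with Corollary \ref{cor-ind} plus Krull--Schmidt---is the natural one, and it is essentially how the statement is meant to follow from Theorem \ref{st-def1} (the paper states the corollary with no separate argument). Your step (i) and the bookkeeping in step (ii) are fine (modulo routine care in the type II and $D(2,1;t)$, $t\notin\mathbb Q$, cases). But the proposal is incomplete at exactly the point you flag yourself: you never prove that every simple non-projective $K/\Gamma$-module of nonzero superdimension is of the form $\dot{L}$ for a \emph{simple} $G$-module $L$ (equivalently, the converse of Theorem \ref{restriction}(a): an indecomposable non-projective $G$-module $X$ with $\dot{X}$ simple is itself simple). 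Deferring this to ``the defect-$1$ classification of simple modules'' without carrying it out leaves the pivotal step unproven; as written, your argument only shows that $M\otimes N$ is, modulo projectives, a direct sum of indecomposables whose restrictions to $K$ are simple plus projective, which is strictly weaker than the assertion.

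The gap can in fact be closed softly, with no classification, using only Theorem \ref{st-def1}, Lemma \ref{essurj} and Theorem \ref{restriction}. Given a simple non-projective $K/\Gamma$-module $V$, essential surjectivity (Lemma \ref{essurj}) provides an indecomposable non-projective $G$-module $X$ with $\dot{X}\simeq V$. Choose a simple submodule $L\subseteq X$; it is non-projective, since an injective simple submodule would split off the indecomposable $X$. The inclusion $\iota\colon L\to X$ cannot factor through a projective module: if it did, the injective hull $I(L)$ would embed into $X$ (the kernel of the induced map $I(L)\to X$ meets the socle $L$ trivially, hence vanishes), and being injective it would split off, forcing $X\simeq I(L)$ to be projective. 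Hence $\iota$ is nonzero in $\underline{\Hom}_G(L,X)$, which by full faithfulness of the equivalence of Theorem \ref{st-def1} is identified with $\underline{\Hom}_{K/\Gamma}(\dot{L},\dot{X})$; since $\dot{L}$ is simple by Theorem \ref{restriction}(a) and $\dot{X}\simeq V$ is simple, nonvanishing forces $\dot{L}\simeq\dot{X}$, and then the equivalence gives $L\simeq X$ in $\St G$, so $X\simeq L$ is simple by Krull--Schmidt. With this in hand your step (ii) goes through verbatim; alternatively, one can skip the surjectivity statement altogether and apply the same socle argument directly to each non-projective indecomposable summand $X$ of $M\otimes N$, whose image $\dot{X}$ is simple because it is an indecomposable (under the stable equivalence) summand of the semisimple module $\dot{M}\otimes\dot{N}$.
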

  \subsection{Semisimplification} Recall the functor $F$ from Theorem \ref{thm:mainss}. 
  \begin{theorem}\label{ss-def1} If $G$ is a defect $1$ supergroup and $K$ the normalizer of its Sylow subgroup, then the functor $F:\overline{\Rep G}\to \overline{\Rep K/\Gamma}$ is an equivalence of categories.
  \end{theorem}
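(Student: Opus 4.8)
The plan is to deduce the statement from the two main theorems already in hand: Theorem \ref{thm:mainss}, which gives that $F$ is full (once we know $(G,K)$ is a proper pair, guaranteed here by Proposition \ref{prop:Sylow} since defect $1$ basic supergroups have no $\mathfrak{psq}(n)$ ideals), and Theorem \ref{st-def1}, the equivalence of stable categories $\St G\simeq\St K/\Gamma$. Since both $\overline{\Rep G}$ and $\overline{\Rep K/\Gamma}$ are semisimple, and a full functor between semisimple categories is automatically faithful (this was already observed in the proof of Theorem \ref{thrm:split-ss}), it remains only to upgrade fullness of $F\colon\overline{\Rep G}\to\overline{\Rep K}$ to an essential surjection onto $\overline{\Rep K/\Gamma}$. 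First I would note that $F$ actually lands in $\overline{\Rep K/\Gamma}$: by Lemma \ref{image}, for any simple $G$-module $L$ the subgroup $\Gamma$ acts trivially on $\dot L$, and $S(\Res_K L) = S(\dot L)$ in $\overline{\Rep K}$ since the remaining summands of $\Res_K L$ are projective, hence superdimension $0$, hence negligible; so the simple objects in the image of $F$ all carry trivial $\Gamma$-action.

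The heart of the argument is then essential surjectivity onto $\overline{\Rep K/\Gamma}$. Simple objects of $\overline{\Rep K/\Gamma}$ correspond to indecomposable $K/\Gamma$-modules of nonzero superdimension. Given such an indecomposable $K$-module $V$ with $\Gamma$ acting trivially and $\sdim V\neq 0$, I want to produce a simple $G$-module $L$ with $\dot L\cong V$. Here I would invoke Theorem \ref{st-def1}: the functor $\Res_K\colon\St G\to\St K/\Gamma$ is an equivalence, so there is some $M\in\St G$ with $\Res_K M\cong V$ in $\St K/\Gamma$, i.e. $\Res_K M \cong V\oplus P$ in $\Rep K$ for a projective $K$-module $P$. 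We may take $M$ itself to have no projective summands (replace it by such a representative). Now decompose $M=\bigoplus_j L_j$ into indecomposables in $\Rep G$; by Corollary \ref{tensprodG}-type reasoning, or more directly by the structure theory, one expects $M$ to in fact be a direct sum of simple and projective $G$-modules, so after dropping projectives $M=\bigoplus_j L_j$ with each $L_j$ simple non-projective. Then $\Res_K M = \bigoplus_j \Res_K L_j = \bigoplus_j(\dot L_j\oplus P_j)$ by Theorem \ref{restriction}(a), so comparing with $V\oplus P$ and using Krull–Schmidt together with $\sdim V\neq 0$, exactly one $\dot L_j$ is nonprojective with $\dot L_j\cong V$; that $L_j$ is the desired preimage, giving $F(S(L_j))\cong S(V)$.

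The step I expect to be the main obstacle is the reduction of $M\in\St G$ to a genuine direct sum of simple and projective $G$-modules with the right restriction — i.e., making precise the passage between the \emph{stable}-category statement of Theorem \ref{st-def1} and the honest module-category bookkeeping needed to identify $\dot L$. Concretely: knowing $\Res_K M\cong V$ in $\St K/\Gamma$ only pins down $\Res_K M$ up to projectives, and one must check that the ambiguity is harmless, which rests on $\sdim V\neq 0$ (so $V$ is not itself negligible) and on the fact established in Theorem \ref{restriction} that the nonprojective part of $\Res_K L$ for $L$ simple is exactly the single indecomposable $\dot L$. One should also separately handle the principal block: there $G_x$ and $\Gamma$ act trivially, the relevant simple $G$-modules and their $\dot L$'s were enumerated in Proposition \ref{principalK}, and matching them up is a finite check. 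Finally, I would record that full plus faithful plus essentially surjective yields the claimed equivalence $\overline{\Rep G}\xrightarrow{\ \sim\ }\overline{\Rep K/\Gamma}$.
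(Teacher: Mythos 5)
Your overall strategy is the paper's: the proof in the text is a one-liner deducing the theorem from Theorem \ref{st-def1} (the stable equivalence supplies a bijection between indecomposable $G$-modules and indecomposable $K/\Gamma$-modules of nonzero superdimension), with fullness/faithfulness coming from Theorem \ref{thm:mainss} and the semisimplicity of the targets, exactly as you set up. However, your execution of essential surjectivity contains a step that is genuinely false, not merely hard: you want every indecomposable $V\in\Rep K/\Gamma$ with $\sdim V\neq 0$ to be of the form $\dot{L}$ for a \emph{simple} $G$-module $L$, and to get this you assert that the stable preimage $M$ of $V$ should decompose into simple and projective $G$-modules. Corollary \ref{tensprodG} says nothing about an arbitrary such $M$, and the expected decomposition fails: in the principal block of $K$ (the zigzag category described after Proposition \ref{principalK}) there are indecomposable modules of Loewy length $2$ with three composition factors $T_{n-1},T_n,T_{n+1}$, whose superdimension is $\pm 1\neq 0$ but which are not simple. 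By Theorem \ref{restriction}(a) the module $\dot{L}$ is always simple, so such a $V$ is \emph{not} $\dot{L}$ for any simple $L$, and the corresponding $M$ cannot be a sum of simples and projectives. So the target you set yourself in the "main obstacle" step cannot be reached.

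The repair is short and brings you back to the paper's argument: you never needed simple preimages, because the simple objects of $\overline{\Rep G}$ are the images of \emph{all} indecomposable $G$-modules of nonzero superdimension, not only of the simple ones. Given $V$ indecomposable with $\sdim V\neq 0$ and trivial $\Gamma$-action, Theorem \ref{st-def1} provides an indecomposable non-projective $G$-module $M$ with $\Res_K M\simeq V$ in $\St K/\Gamma$; by Krull--Schmidt this means $\Res_K M\simeq V\oplus P$ with $P$ projective, hence negligible, so $F(S(M))\simeq S(V)$, and $\sdim M=\sdim V\neq 0$ guarantees $S(M)$ is simple. This gives essential surjectivity (and, combined with injectivity of the stable equivalence on isomorphism classes, the bijection on simples the paper invokes) without any claim about the internal structure of $M$. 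The rest of your plan --- fullness via Proposition \ref{prop:Sylow} and Theorem \ref{thm:mainss}, automatic faithfulness, and Lemma \ref{image} to see that the image lies in $\overline{\Rep K/\Gamma}$ --- matches the paper; just note that the descent to $K/\Gamma$ must be checked for all indecomposables of nonzero superdimension, not only for restrictions of simples, which is how the paper uses it in establishing Theorem \ref{st-def1} itself.
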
  
  \begin{proof} Theorem \ref{st-def1} implies that $F$ defines a bijection between isomorphism classes of indecomposable $G$-modules and indecomposable
    $K$-modules of nonzero superdimension.
    \end{proof}
    Recall the result of T. Heidensdorf, \cite{H}, that $$\overline{\Rep GL(1|1)}=\overline{\Rep PGL(1|1)}\simeq \Rep \mathbb G_m^2. $$ 
    Using the above theorem we can compute semisimplification for all defect $1$ groups. In the table below we use the notation $\bar G$ for the linearly reductive supergroup such that $\overline{\Rep G}\simeq \Rep\bar G$.  In the semidirect product $C_2\ltimes \mathbb G_m^2$ the generator of $C_2$ acts by
                      permutation of components in $\mathbb G_m\times\mathbb G_m$ and by involutive outer automorphism on $SO(2r)$ and $PSL(3)$.
 In the case of  $D(2,1;t)$ for rational $t$ the generator of $C_2$ acts on $\mathbb G_m^3$ by permuting two generators and inverting the third generator.
	\renewcommand{\arraystretch}{1.5}	
 
\begin{center}
		\begin{tabular}{|c|c|}
			\hline 
			$G$ & $\bar G$\\
			\hline
			$GL(1|n)$ & $\mathbb G_m^2\times GL(n-1)$ \\
                  \hline
                  $SOSp(2|2n)$ & $\mathbb G_m^2\times SP(2n-2)$\\
			\hline 
			$SOSp(2m+1|2), $ & $(C_2\ltimes \mathbb G_m^2)\times SO(2m-1))$ \\
                  \hline
                  	$SOSp(2m|2), m\geq 2$ & $C_2\ltimes (\mathbb G_m^2\times SO(2m-2))$ \\
			\hline 
			$SOSp(3|2n)$ & $(C_2\ltimes \mathbb G_m^2)\times SOSp(1|2n-2)$ \\	
			\hline 
			$D(2,1;\frac{p}{q}), 0<p<q, (p,q)=1$ & $(C_2\ltimes \mathbb G_m^3)$ \\ 
                  \hline
                  	$D(2,1;t),t\notin \mathbb Q$ & $C_2\ltimes \mathbb G_m^2$ \\ 
			\hline 
			$AG(1|2)$ & $(C_2\ltimes \mathbb G_m^2)\times PSL(2)$ \\
			\hline 
			$AB(1|3)$ & $C_2\ltimes (\mathbb G_m^2\times PSL(3))$ \\
			\hline 
			\end{tabular}
                      \end{center}

                      \subsection{Defect $1$ blocks for higher rank supergroups} Let $G=GL(m|n)$ or $SOSp(m|2n)$. One can find classification of blocks
                      of $\Rep G$ in \cite{GS}, Theorem 2. Every block $\mathcal B$ is equivalent as an abelian category to the principle blocks in $\Rep G_{\mathcal B}$, where $G_{\mathcal B}=GL(k|k)$,
$SOSp(2k|2k)$,  $SOSp(2k+2|2k)$ or $SOSp(2k+1|2k)$. The number $k$ is called degree of atypicality or defect of
the block. We denote by  $K_{\mathcal B}$ the normalizer of a Sylow subgroup of $G_{\mathcal B}$. Then Theorem \ref{st-def1} implies the following
\begin{corollary}\label{cor-def1-gen} The stable categories of a block $\mathcal B$ of defect $1$ and the principal block of $\St K_{\mathcal B}$ are equivalent.
  \end{corollary}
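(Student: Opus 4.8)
The plan is to combine the block classification of \cite{GS} with Theorem \ref{st-def1}. By Theorem 2 of \cite{GS}, a block $\mathcal B$ of $\Rep G$ of defect $1$ is equivalent, as an abelian category, to the principal block $\Rep^0 G_{\mathcal B}$, where $G_{\mathcal B}$ is one of $GL(1|1)$, $SOSp(2|2)$, $SOSp(4|2)$, $SOSp(3|2)$. Both $\mathcal B$ and $\Rep^0 G_{\mathcal B}$ are blocks of Frobenius categories, hence are themselves Frobenius, and any equivalence of abelian categories between Frobenius categories preserves projective (equivalently, injective) objects and therefore induces an equivalence of the associated stable triangulated categories. So $\St \mathcal B \simeq \St^0 G_{\mathcal B}$.

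Next I would apply Theorem \ref{st-def1} to the defect $1$ supergroup $G_{\mathcal B}$. Each of $GL(1|1)$, $SOSp(2|2)$, $SOSp(4|2)$, $SOSp(3|2)$ occurs in the table of Section 3.1 with trivial $\Gamma$: indeed $\Gamma=\{1\}$ in the rows $GL(1|n)$, $SOSp(2|2n)$, $SOSp(m|2)$ and $SOSp(3|2n)$, while the non-trivial groups $\Gamma$ (the $C_2$ for $AG(1|2)$ and the $C_3$ for $AB(1|3)$) belong to supergroups that never appear as $G_{\mathcal B}$ for $G=GL(m|n)$ or $SOSp(m|2n)$. Hence Theorem \ref{st-def1} yields an equivalence $\Res_{K_{\mathcal B}}:\St G_{\mathcal B}\to\St K_{\mathcal B}$.

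Finally I would restrict this equivalence to the principal blocks. The functor $\Res_{K_{\mathcal B}}$ carries $\St^0 G_{\mathcal B}$ into $\St^0 K_{\mathcal B}$ (as used in the proof of Lemma \ref{essurj}, where $\Res_{K_{\mathcal B}}:\St^0 G_{\mathcal B}\to\St^0 K_{\mathcal B}$ is also shown to be essentially surjective); being the restriction of an equivalence it is fully faithful, so $\Res_{K_{\mathcal B}}:\St^0 G_{\mathcal B}\to\St^0 K_{\mathcal B}$ is an equivalence. Composing it with $\St \mathcal B\simeq\St^0 G_{\mathcal B}$ gives the claimed equivalence between the stable category of the block $\mathcal B$ and the principal block of $\St K_{\mathcal B}$. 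The only step demanding any real care --- the main obstacle, such as it is --- is the first one: one must be sure the abstract abelian block equivalence of \cite{GS} is automatically compatible with the Frobenius (projective $=$ injective) structure on the two sides, so that it descends to the stable categories. This holds because projectivity is an intrinsic categorical property of a Frobenius abelian category, but it deserves to be spelled out.
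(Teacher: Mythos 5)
Your proposal is correct and follows exactly the route the paper intends: the paper offers no separate argument beyond recalling the \cite{GS} block equivalence $\mathcal B\simeq\Rep^0 G_{\mathcal B}$ and asserting that Theorem \ref{st-def1} (with $\Gamma$ trivial for $GL(1|1)$, $SOSp(2|2)$, $SOSp(4|2)$, $SOSp(3|2)$) implies the corollary. Your write-up simply supplies the details --- that the abelian block equivalence preserves projectives and hence descends to stable categories, and that $\Res_{K_{\mathcal B}}$ restricts to an equivalence of principal blocks as in Lemma \ref{essurj} --- all of which is consistent with the paper.
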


\section{Defect group of the block}
\subsection{Thick ideals generated by simple modules}
In this section, we assume that $G=GL(m|n)$ or $OSp(m|2n)$ (not connected). In this case, the Sylow subgroup $S\subset G$ is isomorphic to $SL(1|1)^d$ where $d$ is a defect of $G$.
More precisely, $d=\min (m,n)$ for $GL(m|n)$ and $\min(\lfloor{\frac{m}{2}}\rfloor,n)$ for
$OSp(m|2n)$.
It is shown in \cite{PSS} that $$\Y_G:=\g_1^{hom}/(G_0\times\mathbb G_m)$$ is homeomorphic to the Balmer spectrum of
the stable category $\St G$. Recall that we denote by $K$ the normalizer of $S$ in $G$.

Consider a map 
$$p: \s_{\bar 1}/(K_0\times \mathbb G_m)\to \Y_G$$ defined by
$p(x)=G_0\cdot x$. One can check that p induces a bijection of sets. The topology on $\Y_G$ is inherited from the induced topology on $\s_{\bar 1}$.

For every $k\leq d$ we consider the natural embedding $S_k:=SL(1|1)^k\subset S$ and define
$$\Y^k_G:=p((\s_k)_{\bar 1}/\mathbb G_m).$$
It is clear that $\Y^k_G$ is a closed subset of $\Y_G$ and hence there exists a unique radical thick ideal
$\cI_k$ of $\St G$ consisting of all $M\in\St G$ such that $\supp M\subseteq\Y^k_G$.
We say that $\rk(x)=k$ if $k$ is the minimal number such that $G_0\cdot x\cap\s_k\neq\emptyset$.  

\begin{lemma}\label{strat-supp} Let $M$ be a simple $G$-module of atypicality degree $k$,
then $\supp M=\Y^k_G$.
\end{lemma}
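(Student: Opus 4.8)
The plan is to prove the two inclusions $\supp M \subseteq \Y^k_G$ and $\Y^k_G \subseteq \supp M$ separately. For the inclusion $\supp M \subseteq \Y^k_G$: a point of $\supp M$ is represented by a homological element $x \in \g_{\bar 1}^{hom}$ such that $DS_x M \neq 0$ in $\St\g_x$, i.e. $M$ is not ``projective along $x$''. Using the support data of $\Y_G$ as the Balmer spectrum of $\St G$ (from \cite{PSS}) together with the bijection $p$, it suffices to show that if $x \in \s_{\bar 1}$ has $\rk(x) > k$ then $DS_x M$ is projective (equivalently zero in the stable category). This should follow from the fact that $M$ has atypicality degree $k$: after conjugating $x$ into $\s_{k'}$ with $k' = \rk(x) > k$, the element $x$ is a generic isotropic element of a Sylow subalgebra of a sub-supergroup of defect $k' > k$, and the iterated $DS$ construction applied to a module of atypicality $k$ must vanish once we pass to a ``regular'' direction of rank exceeding $k$. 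Concretely, I would decompose $DS_x$ as a composition of rank-one $DS$ functors $DS_{x_1}\circ\cdots\circ DS_{x_{k'}}$ corresponding to a basis of a maximal isotropic subspace containing $x$, and use that each application of a generic rank-one $DS$ drops the atypicality degree by exactly one on simple modules (this is the standard behavior of the Duflo--Serganova functor on simple modules of $GL(m|n)$ and $OSp(m|2n)$, see \cite{GS}), so that after $k$ steps we reach a semisimple (hence, up to projectives, a sum of simples of atypicality $0$) module, and the $(k+1)$-st application kills it.

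For the reverse inclusion $\Y^k_G \subseteq \supp M$: since $\Y^k_G$ is irreducible (it is the image under the homeomorphism $p$ of $(\s_k)_{\bar 1}/\mathbb{G}_m$, which is irreducible), it is enough to produce a single point of $\Y^k_G$ lying in $\supp M$, and then use that $\supp M$ is closed together with the fact that $\cI_k$ is the thick ideal of objects supported in $\Y^k_G$. In fact the cleanest route is: by the previous paragraph $\supp M \subseteq \Y^k_G$, so $M \in \cI_k$; it then suffices to show $M \notin \cI_{k-1}$, i.e. that $\supp M \not\subseteq \Y^{k-1}_G$, because $\Y^k_G$ has a unique generic point not contained in $\Y^{k-1}_G$ and $\supp M$, being closed and contained in the irreducible $\Y^k_G$, must then be all of $\Y^k_G$. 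To see $M \notin \cI_{k-1}$, pick a generic isotropic element $x$ of the standard $\s_k = SL(1|1)^k$; one computes $DS_x M$ and, using that $M$ has atypicality degree exactly $k$, one finds $DS_x M$ has a nonzero summand which is a simple $\g_x$-module of atypicality $0$ and nonzero superdimension, hence is nonprojective in $\St\g_x$. Thus $x \in \supp M$ but $x \notin \Y^{k-1}_G$.

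The main obstacle I expect is controlling the behavior of the iterated Duflo--Serganova functor on the simple module $M$: one needs precise information that $DS_x$ takes a simple $G$-module of atypicality $k$ to a module whose nonprojective part, in the stable category of $\g_x$, is (a sum of shifts of) a simple $\g_x$-module of atypicality $k - \rk(x)$, and in particular is zero exactly when $\rk(x) > k$ and nonzero with a typical summand when $\rk(x) = k$. This is where the restriction to $G = GL(m|n)$ or $OSp(m|2n)$ is essential, and where one must invoke the explicit description of $DS_x$ on simples from \cite{GS} (and the compatibility of $DS$ with the combinatorics of weight diagrams / arc diagrams). Once that input is in hand, the topological bookkeeping with $p$, the irreducibility of $\Y^k_G$, and closedness of supports is routine.
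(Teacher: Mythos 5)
There is a genuine gap, and it is concentrated in your reverse inclusion $\Y^k_G\subseteq\supp M$. Your topological shortcut --- produce one point of rank $k$ in $\supp M$ and conclude by ``irreducibility of $\Y^k_G$'' --- does not work, because the point you produce (the class of a generic \emph{isotropic}, i.e.\ self-commuting, element $x\in(\s_k)_{\bar 1}$) is not the generic point of $\Y^k_G$. Note that $\Y^k_G=p((\s_k)_{\bar 1}/\mathbb G_m)$ contains the classes of \emph{all} odd elements of $\s_k$, most of which have $[x,x]\neq 0$; the closure of the class of a self-commuting element stays inside the locus $\{[x,x]=0\}$, which is a proper closed subset of $\Y^k_G$. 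So a closed set containing your single point need not be all of $\Y^k_G$, and ``contains a point outside $\Y^{k-1}_G$'' does not force equality either. This is exactly where the real work lies: one must show $DS_yM\neq 0$ for \emph{every} homological $y$ of rank $k$, including those with $[y,y]\neq 0$. The paper does this by first using Theorem 12.1(3) of \cite{GHSS} for self-commuting $x$ of rank $k$: $DS_xM$ is a nonzero semisimple typical $\g_x$-module with the purity property $[DS_xM:L][DS_xM:\Pi L]=0$, hence $[\Res_{\g_x}M:L]\neq[\Res_{\g_x}M:\Pi L]$ for some typical simple $L$; since any homological $y$ of rank $k$ can be taken in $\s_k$ with $\g_y=\g_x$ and $y$ is a $\g_x$-module map $M\to\Pi M$, this multiplicity imbalance forces $DS_yM\neq 0$. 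Only after that does one use closedness of $\supp M$ together with the fact that the closure of $\Y^k_G\setminus\Y^{k-1}_G$ is $\Y^k_G$.

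Your forward inclusion also rests on an unjustified mechanism: you propose to write $DS_x$ for a rank-$k'$ element as a composition of rank-one functors $DS_{x_1}\circ\cdots\circ DS_{x_{k'}}$ and argue that each generic step drops atypicality by one. Compositions of $DS$ functors do not in general compute the $DS$ functor of a higher-rank element (this is precisely the delicate point that requires spectral-sequence arguments, cf.\ \cite{ESS}), and you would still need to handle homological $x$ with $[x,x]\neq 0$. The paper avoids all of this: for self-commuting $x$ with $\rk(x)>k$ it quotes Theorem 2.1 of \cite{S} directly to get $DS_xM=0$, and then extends to arbitrary homological $x$ of rank $>k$ by closedness of support (a degeneration of such an $x$ inside $\supp M$ would produce a self-commuting element of the same rank in $\supp M$). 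So while your overall shape (characterize $\supp M$ by the vanishing pattern of $DS_x M$ in terms of $\rk(x)$, then use closedness) matches the paper, both key steps as you propose them fail or are unsubstantiated, and the purity argument for non-self-commuting elements of rank $k$ --- the heart of the paper's proof --- is missing entirely.
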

\begin{proof} We have to show that $DS_x(M)\neq 0$ if and only if $\rk(x)\leq k$. First, Theorem 2.1 in \cite{S} implies that $DS_x M=0$ if $\rk(x)>k$ and $[x,x]=0$. By closeness of support, this implies $DS_x M=0$ for any homological $x$ with $\rk(x)>k$.

Now let $\rk(x)=k$. Assume first that $[x,x]=0$. Theorem 12.1(3) in \cite{GHSS} implies that  $DS_x M$ is a nonzero semisimple 
typical $\g_x$-module satisfying the purity property $[DS_xM:L][DS_xM:\Pi L]=0$. Now let $y$ be any homological element of rank $k$. We may assume that $y,x\in\s_k$. Then $\g_y=\g_x$ and both $x,y$ are morphisms
of $\g_x$-modules $M\to\Pi M$. The purity property implies that $[\Res_{\g_x}M:L]\neq [\Res_{\g_x}M:\Pi L]$
for some simple typical $\g_x$-module $L$. The latter implies $DS_y M\neq 0$.
Since $\supp M$ is closed, and the closure of $\Y_G^k\setminus\Y_G^{k-1}$ coincides with $\Y_G^k$, the statement follows.
\end{proof}
\begin{corollary}\label{cor-supp} The ideal $\cI_k$ is generated by any simple module of atypicality degree $k$. 
\end{corollary}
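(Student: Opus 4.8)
The plan is to derive the corollary directly from Lemma \ref{strat-supp} together with the classification of radical thick tensor ideals of $\St G$ by closed subsets of the Balmer spectrum $\Y_G$ --- the same classification already used to define $\cI_k$. Fix a simple $G$-module $L$ of atypicality degree $k$ and let $\langle L\rangle\subseteq\St G$ be the thick tensor ideal it generates. First I would record the elementary functorial properties of Balmer support: $\supp(A\oplus B)=\supp A\cup\supp B$; $\supp$ is invariant under the shift $\Omega$; $\supp(\operatorname{cone}(f))\subseteq\supp A\cup\supp B$ for $f\colon A\to B$; $\supp(A\otimes X)\subseteq\supp A$; and $\supp$ of a direct summand of $A$ lies in $\supp A$. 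These say exactly that $\{M\in\St G:\supp M\subseteq\supp L\}$ is a thick tensor ideal; since it contains $L$, it contains $\langle L\rangle$, and therefore
$$\bigcup_{M\in\langle L\rangle}\supp M=\supp L.$$
By Lemma \ref{strat-supp}, $\supp L=\Y_G^k$, so the support of the ideal $\langle L\rangle$ is precisely $\Y_G^k$.

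Next I would invoke Balmer's classification theorem. Since $\Y_G$ is homeomorphic to $\operatorname{Spc}(\St G)$ and is noetherian (being the quotient of a finite-dimensional vector space by the scaling action of $\mathbb{G}_m$), every closed subset is a Thomason subset, and $\mathcal J\mapsto\bigcup_{M\in\mathcal J}\supp M$ and $Y\mapsto\{M\in\St G:\supp M\subseteq Y\}$ are mutually inverse bijections between radical thick tensor ideals of $\St G$ and closed subsets of $\Y_G$. This is precisely the correspondence under which $\cI_k$ was introduced as the radical thick ideal attached to $\Y_G^k$. Under it $\cI_k$ corresponds to $\Y_G^k$, and the radical thick ideal $\sqrt{\langle L\rangle}$ corresponds to $\bigcup_{M\in\langle L\rangle}\supp M=\Y_G^k$; hence $\sqrt{\langle L\rangle}=\cI_k$. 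That is, $\cI_k$ is generated, as a radical thick tensor ideal, by any simple module of atypicality degree $k$. (If in addition every thick tensor ideal of $\St G$ is radical --- a property that follows from the analysis of $\Y_G$ in \cite{PSS} --- then $\langle L\rangle=\cI_k$ on the nose.)

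There is no genuine obstacle here: the only point to be careful about is that the hypotheses of Balmer's classification hold for $\St G$ --- namely that it is a rigid tensor triangulated category, that $\Y_G$ is its Balmer spectrum, and that $\Y_G$ is noetherian so that closed subsets are Thomason. All of this is established in \cite{PSS} and was already invoked to make sense of $\cI_k$, so the corollary is a purely formal consequence of Lemma \ref{strat-supp}.
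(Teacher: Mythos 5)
Your proof is correct and follows essentially the same route as the paper: compute the support of the thick tensor ideal generated by $L$ via Lemma \ref{strat-supp}, then appeal to the Balmer-type classification of radical thick ideals of $\St G$ by closed subsets of $\Y_G$ (the same classification from \cite{PSS} that underlies the definition of $\cI_k$). The only loose end is your parenthetical: the on-the-nose equality $\langle L\rangle=\cI_k$ (i.e.\ radicality of $\langle L\rangle$) does not come from the analysis of $\Y_G$ in \cite{PSS} but from rigidity of $\St G$ --- since $L$ is a direct summand of $L\otimes L\otimes L^*$, every thick tensor ideal is radical --- which is exactly how the paper closes the argument.
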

\begin{proof} Let $\cI(M)$ denote the ideal generated by a simple module $M$ of atypicality degree $k$. Due to Lemma \ref{strat-supp} we just have to show that $\cI(M)$ is the radical ideal.
 For this we have to check that $M$ lies in $\cI(M\otimes M)$, which is clear since $M$ is a direct summand of $M\otimes M\otimes M^*$. In fact, in a rigid tensor category thick ideals are always radical.   
\end{proof}
 
\subsection{Relative projectivity} 

Let $H\subset G$ be a pair of quasireductive supergroups. We call a $G$-module $M$ {\it $H$-projective} if any of the following equivalent conditions holds:
\begin{enumerate}
\item if an exact sequence $0\to N\to R\to M\to 0$ splits over $H$ then it splits over $G$;
\item the restriction map $\Ext_G^*(R,M)\to \Ext_K^*(R,M)$ is injective for any $G$-module $R$;
\item there exists an $H$-module $V$ such that $M$ is a direct summand of the induced module
$\Ind^G_H V$;
\item the natural embedding $M\to\Ind^G_H M$ splits.
\end{enumerate}
The equivalence of the above conditions can be proven in the same way as for modular representations of finite groups.

The following proposition explains some relation between support and  relative projectivity.
\begin{proposition}\label{bigatyp} Assume that $M$ is $S_k$-projective. Then $M\in\cI_k$.    \end{proposition}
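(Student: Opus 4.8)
The plan is to reduce the statement to the support computation already established in Lemma~\ref{strat-supp} together with the description of $\Y_G^k$. Since $M$ is $S_k$-projective, condition (3) of the definition of relative projectivity gives an $S_k$-module $V$ with $M$ a direct summand of $\Ind^G_{S_k}V$ in $\St G$. Because thick ideals are closed under direct summands, it suffices to show that $\Ind^G_{S_k}V$ lies in $\cI_k$, i.e.\ that $\supp(\Ind^G_{S_k}V)\subseteq\Y_G^k$. This in turn reduces to a statement about the behavior of support under induction from $S_k$.

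The key step is therefore: for any homological $x\in\g_{\bar 1}^{hom}$ with $\rk(x)>k$, we have $DS_x(\Ind^G_{S_k}V)=0$. To see this I would use the same mechanism that appears in the proof of Proposition~\ref{prop:full} and Lemma~\ref{lem:ind}: the functor $DS_x$ is symmetric monoidal, and one has a Mackey-type description of $DS_x\Ind^G_{S_k}V$ in terms of the double cosets of $S_k$ in $G$ relevant to the action of $x$. Concretely, $DS_x$ of an induced module only "sees" the conjugates $gS_kg^{-1}$ whose odd part meets a line through $x$ up to the $G_0$-action; and by the definition of $\rk(x)>k$ the element $x$ is not $G_0$-conjugate into $\s_k$, so no such conjugate contributes. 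Hence $DS_x(\Ind^G_{S_k}V)=0$ whenever $\rk(x)>k$. Equivalently, and more cleanly, one can argue that $DS_x\circ\Ind^G_{S_k}$ factors through $DS_y$-type functors for $y\in(\s_k)_{\bar 1}$, which all vanish unless $x$ is conjugate into $\s_k$; combined with closedness of support (as in Lemma~\ref{strat-supp}) this forces $\supp(\Ind^G_{S_k}V)\subseteq\Y_G^k$.

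Granting this, $\supp M\subseteq\supp(\Ind^G_{S_k}V)\subseteq\Y_G^k$, and since $\cI_k$ is precisely the thick ideal of modules supported in $\Y_G^k$, we conclude $M\in\cI_k$.

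The main obstacle I expect is making the Mackey/double-coset analysis of $DS_x\Ind^G_{S_k}V$ rigorous in the algebraic-supergroup setting — in particular, justifying that $DS_x$ applied to an induced module is controlled only by those conjugates of $S_k$ into which $x$ (or a scalar multiple of $x$) can be moved by $G_0$. If a clean general Mackey formula for $DS_x\Ind$ is not available, the fallback is to argue pointwise: use that $\Ind^G_{S_k}V$ is, as an $S_k$-module after restriction along any $g$, a sum of modules induced/restricted through $gS_kg^{-1}\cap S_k$, and invoke Theorem~2.1 of \cite{S} (the rank criterion already used in Lemma~\ref{strat-supp}) to kill $DS_x$ on the nose for $\rk(x)>k$, then extend to all homological $x$ by closedness of support. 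Everything else is formal: rigidity gives that thick ideals are radical and summand-closed, and the definition of $\cI_k$ does the rest.
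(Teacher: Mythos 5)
Your overall reduction is the same as the paper's: use relative projectivity to realize $M$ as a direct summand of a module induced from $S_k$ (the paper uses condition (4), so $M$ splits off $\Ind^G_{S_k}M$; you use condition (3) with an auxiliary $V$ --- this difference is immaterial), then show $DS_x$ of the induced module vanishes for every homological $x$ with $\rk(x)>k$, and conclude $\supp M\subseteq\Y^k_G$, i.e.\ $M\in\cI_k$. The genuine gap is the key vanishing statement itself. You propose a ``Mackey-type'' description of $DS_x\Ind^G_{S_k}V$ in terms of double cosets, admit you cannot make it rigorous, and offer a fallback via Theorem~2.1 of \cite{S}; neither works as stated. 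Theorem~2.1 of \cite{S} is a statement about $DS_x$ of \emph{simple} modules of a given atypicality and says nothing about an arbitrary induced module $\Ind^G_{S_k}V$, and the ``pointwise'' restriction argument does not interact with $DS_x$ in any obvious way: $DS_x$ of a $G$-module is not computed from restrictions to conjugates of $S_k$ without precisely the kind of geometric input you are missing. In other words, the central analytic step of your proposal is a hoped-for lemma, not a proof.

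The tool that closes this gap --- and is what the paper actually invokes --- is the localization theorem for homological vector fields of \cite{SS}: for $x\in\g_{\bar 1}^{hom}$, the functor $DS_x$ applied to sections of an equivariant sheaf on $G/S_k$ (in particular to $\Ind^G_{S_k}V$) vanishes whenever the vector field induced by $x$ on $G/S_k$ has no fixed points. The condition $\rk(x)>k$ means no $G_0$-conjugate of $x$ lies in $\s_k$, which is exactly the no-fixed-point condition; hence $DS_x\Ind^G_{S_k}V=0$, so $DS_xM=0$, and $\supp M\subseteq\Y^k_G$ follows (no separate closedness argument is even needed once this holds for all homological $x$ of rank $>k$). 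So your intuition about which conjugates of $S_k$ ``contribute'' is correct in spirit, but to turn it into a proof you must cite or reprove this localization theorem rather than a Mackey formula, which is not available in this setting.
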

\begin{proof} If $M$ is relatively projective with respect to $S_k$ then $M$ splits as a direct summand in $\Ind^G_{S_k}M$. Suppose that $x\in\g_{\bar 1}^{hom}$ and $\rk(x)>k$. Then the vector field induced by $x$ does not have fixed points on $G/S_k$ and therefore $DS_x\Ind^G_{S_k}M=0$, see \cite{SS}. This implies $DS_xM=0$. Therefore $\supp M\subset \mathbb Y_G^k$.
    \end{proof}

\subsection{Defect group of a block} 
Here we again assume that $G=GL(m|n)$ or $SOSp(m|2n)$.
 Recall the classification of blocks, see \cite{GS} (Section 5) and Section 2.7.
 Note that the Sylow subgroup of $G_{\mathcal B}$ is isomorphic to $S_k$. 
Let  $\mathcal B_0$ denote the principal block in $\Rep G_{{\mathcal B}}$.
We denote by $F: \mathcal B\to\mathcal B_0$ the functor which establishes equivalence of blocks.

\begin{theorem}\label{defectblock} Let $\mathcal B$ be a block in $\Rep G$ of atypicality degree $k$.
Then every $M\in\mathcal B$ is $S_k$-projective.    
\end{theorem}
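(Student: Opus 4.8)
The plan is to reduce the statement for a general block $\mathcal B$ of atypicality degree $k$ to the principal block $\mathcal B_0$ of $G_{\mathcal B}$, where $G_{\mathcal B}$ is one of $GL(k|k)$, $SOSp(2k|2k)$, $SOSp(2k+2|2k)$ or $SOSp(2k+1|2k)$, and then exploit the concrete structure of the principal block. The first step is to observe that the equivalence of abelian categories $F:\mathcal B\to\mathcal B_0$ of \cite{GS} is compatible with the relevant induction/restriction functors: the Sylow subgroup of $G_{\mathcal B}$ is exactly $S_k$, and $S_k$-projectivity is an intrinsic homological condition (condition (2) in the definition of relative projectivity), so it is preserved under any equivalence of abelian categories that intertwines $\mathrm{Res}_{S_k}$ — or, more robustly, I would characterize $S_k$-projectivity for objects of $\mathcal B$ purely in terms of support: by Proposition~\ref{bigatyp} every $S_k$-projective module lies in $\cI_k$, and by Corollary~\ref{cor-supp} the ideal $\cI_k$ is generated by any simple module of atypicality degree $k$. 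Since every simple module in $\mathcal B$ has atypicality degree $k$ (that is what "block of atypicality degree $k$" means), every object of $\mathcal B$ lies in $\cI_k$.

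**Next I would** prove the converse direction needed here: that every module in the block $\mathcal B$ is in fact $S_k$-projective, not merely supported on $\Y_G^k$. The cleanest route is to work in the principal block $\mathcal B_0$ of $G_{\mathcal B}$. Here $G_{\mathcal B}$ has defect exactly $k$ and $S_k$ is its whole Sylow subgroup, so a module $M\in\mathcal B_0$ is $S_k$-projective iff the embedding $M\to\Ind^{G_{\mathcal B}}_{S_k}M$ splits. I would argue this holds for all $M\in\mathcal B_0$ by a dimension-shifting / induction-on-Loewy-length argument: the trivial module $\C\in\mathcal B_0$ is $S_k$-projective because $S_k$ is a splitting subgroup (Proposition~\ref{ind-triv}-type statement, or directly: $\Ind^{G_{\mathcal B}}_{S_k}\C=\C\oplus Q$ with the complement projective, hence the split), and then any simple $L\in\mathcal B_0$ is a summand of $L\otimes\C$-type manipulations... more precisely, since $\C$ is $S_k$-projective and relative projectivity is closed under tensoring ($M$ $S_k$-projective $\Rightarrow$ $M\otimes N$ is $S_k$-projective for all $N$, because $\Ind^G_{S_k}(V)\otimes N\cong\Ind^G_{S_k}(V\otimes\Res N)$ by the projection formula), every object of $\mathcal B_0$, being a summand of $\C\otimes(\text{something})$ only after knowing the block is "generated" by $\C$ — so instead I would use that by Theorem~\ref{st-def1} the stable category $\St\mathcal B_0$ is controlled by $\St K_{\mathcal B}$ and that $S_k$-projectivity of $M$ is equivalent to $M$ being zero in the appropriate localization, which forces all of $\mathcal B_0$ to consist of $S_k$-projective modules since $S_k$ is the full defect/Sylow group.

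**The main obstacle** will be making the tensor-ideal argument airtight: I need that $\cI_k$, as a thick tensor ideal of $\St G$, actually consists precisely of the $S_k$-projective modules (up to projectives), i.e. the reverse inclusion to Proposition~\ref{bigatyp}. Proposition~\ref{bigatyp} gives $\{S_k\text{-projective}\}\subseteq\cI_k$; I need $\cI_k\subseteq\{S_k\text{-projective}\}$, at least on objects of a single block of atypicality $k$. For this the key input is that $\Ind^G_{S_k}\C$ contains $\C$ as a summand (splitting subgroup) together with the geometric fact from \cite{SS} that $DS_x\Ind^G_{S_k}M=0$ whenever $\rk(x)>k$; combining these with the characterization of $\St G$ via the Balmer spectrum $\Y_G$ (so that a module with $\supp\subseteq\Y_G^k$ differs, in the stable category, from an $S_k$-projective one by a projective) should close the gap. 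Concretely, for $M\in\mathcal B$ I would write, using Corollary~\ref{cor-ind}-type splitting, $M$ as a summand of $\Ind^G_{S_k}\mathrm{Res}_{S_k}M$ modulo projectives — the support condition $\supp M\subseteq\Y_G^k$ being what guarantees the "error term" is genuinely projective rather than merely supported in lower rank. I would then invoke condition (3) of the definition of relative projectivity to conclude $M$ is $S_k$-projective, and finally transport back along $F$ from $\mathcal B_0$ to $\mathcal B$, noting $F$ sends $S_k$ to the Sylow of $G_{\mathcal B}$ which is again (isomorphic to) $S_k$ and commutes with the relevant inductions up to projectives.
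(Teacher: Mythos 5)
There is a genuine gap, and it sits at the heart of your argument. Your primary route is to characterize $S_k$-projectivity by support: Proposition \ref{bigatyp} plus Corollary \ref{cor-supp} give you only the inclusion $\{S_k\text{-projective}\}\subseteq\cI_k$, and you correctly note you would need the reverse inclusion, but that reverse inclusion is precisely Open Question (1) of the paper and is not available; nothing in Lemma \ref{strat-supp} or the Balmer-spectrum description produces a splitting of $M\to\Ind^G_{S_k}M$ from the condition $\supp M\subseteq\Y_G^k$, and your fallback tools do not apply: Theorem \ref{st-def1}, Proposition \ref{ind-triv} and Corollary \ref{cor-ind} are proved only for defect $1$ supergroups (with $K$ the normalizer of a Sylow subgroup), so you cannot invoke them for $G_{\mathcal B}$ of defect $k>1$, and $S_k$ is not a splitting subgroup of $G$ itself when $k$ is less than the defect of $G$, so there is no ``Corollary \ref{cor-ind}-type splitting'' of $M$ off $\Ind^G_{S_k}\Res_{S_k}M$ to begin with. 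Within $G_{\mathcal B}$ your instinct is right but you talk yourself out of it: since $S_k$ is splitting in $G_{\mathcal B}$, the trivial module is $S_k$-projective, and because relative projectivity is closed under tensoring (projection formula), \emph{every} $G_{\mathcal B}$-module $N\cong\triv\otimes N$ is $S_k$-projective --- no Loewy-length induction, no restriction to the principal block, and no localization argument is needed.

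The missing idea is how to pass from $\mathcal B$ to $G_{\mathcal B}$-projectivity. The paper does this not by claiming the Gruson--Serganova equivalence intertwines $\Res_{S_k}$ (which you assert but do not justify, and which is not how condition (2) can be transported between categories of modules over two different groups), but by identifying the inverse equivalence explicitly: $F^*$, the left adjoint of $F$, is $F^*(V)=p_{\mathcal B}(\Ind^G_{G_{\mathcal B}}V)$, where $p_{\mathcal B}$ is the projection onto the block. Hence $M=F^*F(M)$ is a direct summand of $\Ind^G_{G_{\mathcal B}}F(M)$, so $M$ is $G_{\mathcal B}$-projective by condition (3) of the definition; combining with the previous paragraph ($F(M)$ is a summand of $\Ind^{G_{\mathcal B}}_{S_k}\Res_{S_k}F(M)$) and transitivity of induction, $M$ is a summand of a module induced from $S_k$, i.e.\ $S_k$-projective. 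Your proposal gestures at ``transport back along $F$, which commutes with the relevant inductions up to projectives,'' but this compatibility is exactly the nontrivial input, and without the explicit description of $F^*$ as block projection of $\Ind^G_{G_{\mathcal B}}$ your argument does not close.
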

\begin{proof} The functor $F^*$ inverse to $F$ is the left adjoint to $F$. It is not difficult to see that 
$$F^*(V)=p_{\mathcal B}(\Ind^G_{G_\mathcal B} V) ,$$
where $p_{\mathcal B}$ denotes the projection to the block $\mathcal B$.
Since $F^*F(M)=M$, we obtain that $M$ is a direct summand in $\Ind^G_{ G_\mathcal B} F(M)$ and hence $M$ is $G_{\mathcal B}$-projective.
But $S_k$ is a splitting subgroup of $G_{\mathcal B}$ and hence any $G_{\mathcal B}$-module is $S_k$-projective. Thus, $M$ is $S_k$-projective.
\end{proof}

 Theorem \ref{defectblock}, Proposition \ref{bigatyp} and Corollary \ref{cor-supp} imply the following. 
\begin{corollary} Given a block $\mathcal B$ of atypicality $k$. Then $S_k$ is a minimal subgroup of $S$ with property that all modules in $\mathcal B$  are $S_k$-projective. 
\end{corollary}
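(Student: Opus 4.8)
The plan is to read the statement as two claims: first, that every module in $\mathcal B$ is $S_k$-projective, and second, that no strictly smaller subgroup of $S$ has this property. The first claim requires nothing new — it is exactly Theorem \ref{defectblock}. So all of the work is in the minimality assertion, and this is where Proposition \ref{bigatyp} and Corollary \ref{cor-supp} enter.

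For minimality I would argue via support theory. Fix a simple module $L\in\mathcal B$; since every simple module of $G$ lying in a given block has the atypicality degree of that block, $L$ has atypicality degree $k$, and hence by Lemma \ref{strat-supp} — equivalently, since Corollary \ref{cor-supp} exhibits $L$ as a generator of the radical thick ideal $\cI_k$ — we have $\supp L=\Y^k_G$. Now suppose $S'\subseteq S$ is a subgroup such that every module of $\mathcal B$, in particular $L$, is $S'$-projective. Then $L$ is a direct summand of $\Ind^G_{S'}\Res_{S'}L$, and I would repeat the argument of Proposition \ref{bigatyp} in this generality: if $x\in\g_{\bar 1}^{hom}$ is not $G_0$-conjugate into the odd part of the Lie superalgebra of $S'$, then the vector field induced by $x$ on $G/S'$ has no fixed point, so $DS_x\Ind^G_{S'}(-)=0$ by \cite{SS}, and hence $DS_x L=0$. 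Thus $\supp L$ lies in the image under $p$ of the homological locus in $(\operatorname{Lie}S')_{\bar 1}$, so $\Y^k_G$ is contained in that image; in particular every homological element of rank at most $k$ is $G_0$-conjugate into $\operatorname{Lie}S'$. Specializing to $S'=S_j$ with $j<k$, this image is $\Y^j_G\subsetneq\Y^k_G$ — which is literally Proposition \ref{bigatyp} applied with $j$ in place of $k$ — a contradiction, so $S_k$ is minimal along the standard chain $S_0\subset S_1\subset\dots\subset S_d=S$.

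To upgrade this to minimality among all subgroups of $S$, the remaining point is to show that a proper subgroup $S'\subsetneq S$ whose odd Lie algebra does not meet every $G_0$-orbit of homological elements of rank at most $k$ cannot satisfy the required property, and conversely that a subgroup which does meet all of them contains a $G_0$-conjugate of $S_k$. I expect this to be the main obstacle: it is a concrete question about which subalgebras of $\s\cong\s\l(1|1)^d$ absorb all rank-$\le k$ homological elements up to conjugacy, which should follow from a genericity argument — a generic rank-$k$ element of $\s_k$ lies in no proper subalgebra obtained by discarding coordinates — but needs to be made precise. If one is content to state the corollary only for the chain $\{S_j\}$, then the proof is immediate from Theorem \ref{defectblock}, Proposition \ref{bigatyp} and Corollary \ref{cor-supp}, with no obstacle.
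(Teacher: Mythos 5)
Your proposal is correct and takes essentially the same approach as the paper: the paper's entire proof is the one-line observation that Theorem \ref{defectblock} gives $S_k$-projectivity of every module in $\mathcal B$, while Proposition \ref{bigatyp} combined with Corollary \ref{cor-supp} (a simple module of atypicality $k$ has support $\Y^k_G$, hence lies in no $\cI_j$ with $j<k$) rules out the smaller subgroups $S_j$. The finer point you flag, minimality among arbitrary subgroups of $S$ rather than along the chain $\{S_j\}$, is not addressed by the paper's argument either (compare open question (2) on defect groups), so it is not a gap relative to the paper's own proof.
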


\subsection{Open questions}
\begin{enumerate}
    \item Is converse of Proposition \ref{bigatyp} true? Is any module in $\mathcal I_k$ also
    $S_k$-projective?
    \item Let us call the defect group $D$ of a block $\mathcal B$ a minimal subgroup of $G$ with property that every module in $\mathcal B$ is $D$-projective. Are all defect groups conjugate?
    \item By Theorem \ref{thm:mainss} we do have a surjection $\bar K\to\bar G$ of reductive envelopes of $G$ and $K$ respectively. Describe the kernel of this surjection.  Equivalently, the essential image of $\Rep \bar G$ in $\Rep\bar K$ under $F$.
\end{enumerate}

\bibliographystyle{amsalpha}

\end{document}